\def\a{\alpha}
\def\b{\beta}
\def\c{\gamma}
\def\e{\epsilon}
\def\s{\sigma}
\def\t{\tau}
\def\NN{{\mathbb N}}
\def\PP{{\mathbb P}}
\def\ZZ{{\mathbb Z}}
\def\cal{\mathcal}
\def\cA{{\cal A}} 
\def\cB{{\cal B}}
\def\cC{{\mathscr C}}
\def\cD{{\mathscr D}}
\def\cH{{\mathscr  H}}
\def\cHom{{\cal Hom}}
\def\cM{{\cal M}} 
\def\cN{{\cal N}} 
\def\cO{{\cal O}} 
\def\cT{{\mathscr T}}
\def\cX{{\cal X}} 
\def\cY{{\cal Y}} 
\def\fm{{\mathfrak m}}
\def\M{{\mathfrak M}}
\def\cd{\operatorname {cd}}
\def\coker{\operatorname {coker}}
\def\End{\operatorname {E nd}}
\def\Ext{\operatorname {Ext}}
\def\Hom{\operatorname {Hom}}
\def\id{\operatorname {id}}
\def\Aut{\operatorname{Aut}}
\def\cd{\operatorname{cd}}
\def\Coker{\operatorname{Coker}}
\def\depth{\operatorname{depth}}
\def\dim{\operatorname{dim}}
\def\End{\operatorname{End}}
\def\Ext{\operatorname{Ext}}
\def\uExt{\operatorname{\underline{Ext}}}
\def\gldim{\operatorname{gldim}}
\def\grmod{\operatorname{grmod}}
\def\grproj{\operatorname{grproj}}
\def\GrMod{\operatorname{GrMod}}
\def\H{\operatorname{H}}
\def\Hom{\operatorname{Hom}}
\def\RHom{\operatorname{{\bf R}Hom}}
\def\uHom{\operatorname{\underline{Hom}}}
\def\injdim{\operatorname{injdim}}
\def\lcd{\operatorname{lcd}}
\def\mod{\operatorname{mod}}
\def\Mod{\operatorname{Mod}}
\def\Proj{\operatorname{Proj}}
\def\sup{\operatorname{sup}}
\def\tails{\operatorname{tails}}
\def\tors{\operatorname{tors}}
\def\coh{\operatorname{coh}}
\def\uExt{\operatorname{\underline{Ext}}}
\def\uH{\operatorname{\underline{H}}}
\def\uHom{\operatorname{\underline{Hom}}}
\def\RuHom{\operatorname{{\bf R}\underline{Hom}}}
\newcommand{\lotimes}{\otimes^{\bf{L}}}
\def\<{\langle}
\def\>{\rangle}
\def\CM{\operatorname {CM}}
\def\uCM{\underline {\operatorname
{CM}}}
\def\GrAut{\operatorname{GrAut}}
\let\oldtext\text
\def\text#1{\oldtext{\normalshape #1}}
\def\Ga{\Gamma}
\def\RuGa{\operatorname{{\bf R}\underline {\Ga }}_\fm}
\newcommand{\uGa}{\operatorname{\underline{\Ga}}\nolimits}
\def\rnum#1{\expandafter{\romannumeral #1}}
\def\Rnum#1{\uppercase\expandafter{\romannumeral #1}}
\theoremstyle{plain} 
\newtheorem{theorem}{Theorem}[section]
\newtheorem{corollary}[theorem]{Corollary}
\newtheorem{lemma}[theorem]{Lemma}
\newtheorem{proposition}[theorem]{Proposition}
\theoremstyle{definition}
\newtheorem{definition}[theorem]{Definition}
\newtheorem{example}[theorem]{Example}
\newtheorem{question}[theorem]{Question}
\theoremstyle{remark}
\newtheorem{remark}[theorem]{Remark}
\numberwithin{equation}{section}
\begin{document}

\pagenumbering{arabic}

\title{A Categorical Characterization of Quantum Projective Spaces}


\author{Izuru Mori}

\address{
Department of Mathematics,
Faculty of Science,
Shizuoka University,
836 Ohya, Suruga-ku, Shizuoka 422-8529, Japan}

\email{mori.izuru@shizuoka.ac.jp}

\author{Kenta Ueyama}

\address{
Department of Mathematics,
Faculty of Education,
Hirosaki University,
1 Bunkyocho, Hirosaki, Aomori 036-8560, Japan}

\email{k-ueyama@hirosaki-u.ac.jp}

\thanks {
The first author was supported by JSPS Grant-in-Aid for Scientific Research (C) 16K05097
and JSPS Grant-in-Aid for Scientific Research (B) 16H03923.
The second author was supported by JSPS Grant-in-Aid for Young Scientists (B) 15K17503.}

\keywords{quantum projective space, AS-regular algebra, abelian category, helix, noncommutative quadric surface}

\subjclass[2010]{Primary: 14A22 \; Secondary: 16S38, 18E10, 16E35}

\begin{abstract}
Let $R$ be a finite dimensional algebra of finite global dimension over a field $k$.  In this paper, we will characterize a $k$-linear abelian category $\mathscr C$ such that $\mathscr C\cong \operatorname {tails} A$ for some graded right coherent AS-regular algebra $A$ over $R$.
As an application, we will prove that if $\mathscr C$ is a smooth quadric surface in a quantum $\mathbb P^3$ in the sense of Smith and Van den Bergh, then there exists a right noetherian AS-regular algebra $A$ over $kK_2$ of dimension 3 and of Gorenstein parameter 2 such that $\mathscr C\cong \operatorname {tails} A$ where $kK_2$ is the path algebra of the 2-Kronecker quiver $K_2$.  
\end{abstract} 

\maketitle

\tableofcontents

\section{Introduction} 

\subsection{Motivation}
In 1955, J.-P. Serre \cite{Se} introduced and studied the cohomology groups of coherent sheaves
on projective schemes. 
In particular, he proved that the category $\coh (\Proj A)$ of coherent sheaves on the projective scheme $\Proj A$ is equivalent to the category of finitely generated graded $A$-modules modulo finite dimensional modules.
In 1994, motivated by Serre's work, M. Artin and J. J. Zhang \cite{AZ} introduced the categorical notion of a noncommutative projective scheme, and established a fundamental theory of noncommutative projective schemes.
Since then, the study of noncommutative projective schemes has been one of the major projects in noncommutative algebraic geometry.

The noncommutative projective scheme associated to an AS-regular algebra of dimension $n+1$ is considered as
a quantum projective space of dimension $n$.
Since projective spaces are the most basic and important class of projective schemes in commutative algebraic geometry,
quantum projective spaces have been studied deeply and extensively in noncommutative algebraic geometry.
It is known that although quantum projective spaces have many nice properties in common with $\coh \PP^n$, their structures vary widely.
In this paper, we consider the following question.

\begin{question} \label{ques.intro}
Fix a field $k$. When is a given $k$-linear abelian category $\cC$ equivalent to a quantum projective space?
That is, can we find necessary and sufficient conditions on a $k$-linear abelian category $\cC$
such that $\cC$ is equivalent to the noncommutative projective scheme associated to some AS-regular algebra?
\end{question}

If a $k$-linear abelian category $\cC$ is equivalent to a quantum projective space,
then we can investigate $\cC$ using the rich techniques of noncommutative algebraic geometry.
In this sense, Question \ref{ques.intro} is important. 
The following is the main result of this paper, which gives a complete answer to Question \ref{ques.intro}.

\begin{theorem} [Theorem \ref{thm.main}]
Let $R$ be a finite dimensional algebra of finite global dimension over a field $k$.  
Then a $k$-linear abelian category $\cC$ is equivalent to the noncommutative projective scheme associated to some AS-regular algebra $A$ over $A_0\cong R$ of Gorenstein parameter $\ell$ if and only if
\begin{enumerate}
\item[(AS1)] $\cC$ has a canonical bimodule $\omega_{\cC}$, and
\item[(AS2)] there exist an object $\cO\in \cC$ and a $k$-linear autoequivalence $s\in \Aut _k\cC$ such that 
\begin{enumerate}
\item{} $(\cO, s)$ is ample for $\cC$ (in the sense of Artin and Zhang \cite{AZ}), 
\item{} $\{s^i\cO\}_{i\in \ZZ}$ is a full geometric relative helix of period $\ell$ for $\cD^b(\cC)$,
and 
\item{} $\End _{\cC}(\cO)\cong R$.
\end{enumerate}
\end{enumerate}
\end{theorem}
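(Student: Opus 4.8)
The plan is to prove the theorem in two directions, exactly as an ``if and only if''.

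\medskip

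\textbf{The forward direction.} Suppose $A$ is a graded right coherent AS-regular algebra over $R = A_0$ of Gorenstein parameter $\ell$, and set $\cC = \tails A$. First I would recall the standard facts: $(\cO, s) = (\pi A, (1))$ (the image of $A$ together with the shift-by-one autoequivalence) is ample for $\tails A$ by the Artin--Zhang machinery, and $\End_{\cC}(\cO) \cong A_0 \cong R$ by the $\chi$ and ampleness conditions, so (AS2)(c) is immediate. For (AS1), the balanced dualizing complex of $A$ (which exists because $A$ is AS-regular, hence AS-Gorenstein and noetherian-enough after the coherence hypothesis) descends to a canonical bimodule $\omega_{\cC}$ on $\tails A$; this is where the AS-Gorenstein property is used in the form ``$\RuGa_\fm(A)$ is concentrated in one degree up to a twist and a shift by $\ell$.'' The remaining point, (AS2)(b), is the heart of the forward direction: one must show $\{s^i \cO\}_{i \in \ZZ} = \{\pi A(i)\}_{i\in\ZZ}$ is a full geometric relative helix of period $\ell$ for $\cD^b(\tails A)$. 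Fullness (that finitely many consecutive terms generate $\cD^b(\cC)$ as a thick subcategory) follows from Orlov-type semiorthogonal decomposition results for AS-regular algebras together with right coherence; the helix/periodicity condition $s^\ell \cO \cong \omega_{\cC}^\vee \otimes \cO[\,\cdot\,]$ (appropriately interpreted) is precisely a restatement of the Gorenstein parameter being $\ell$, i.e. $\omega_A \cong A(-\ell)$ as graded bimodules up to a shift; and ``geometric'' (the $\Ext$-vanishing between the helix objects in the expected pattern, plus the relative Ext-finiteness over $R$) comes from the AS-regularity resolution of the trivial module and a Koszul-type dual computation.

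\medskip

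\textbf{The converse direction.} Now assume $\cC$ satisfies (AS1) and (AS2). The strategy is the standard reconstruction: define the graded algebra
\[
A \;=\; \bigoplus_{i \ge 0} \Hom_{\cC}(\cO, s^i \cO),
\]
with multiplication induced by composition and the autoequivalence $s$. Then $A_0 = \End_{\cC}(\cO) \cong R$ by (AS2)(c). Because $(\cO, s)$ is ample, the Artin--Zhang reconstruction theorem gives a canonical equivalence $\tails A \simeq \cC$ sending $\pi A \mapsto \cO$, $(1) \mapsto s$; ampleness also forces $A$ to be (at least) right coherent via the ampleness/$\chi$ dictionary, giving the coherence needed to even speak of $\tails A$ in the correct generality. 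It remains to verify that this reconstructed $A$ is AS-regular over $R$ with Gorenstein parameter $\ell$. Finite global dimension of $A$ should be extracted from the \emph{full geometric helix} hypothesis: the helix being full of period $\ell$ yields a finite semiorthogonal decomposition of $\cD^b(\cC)$ by exceptional-type blocks (each equivalent to $\cD^b(\mod R)$), and since $R$ has finite global dimension, a standard gluing argument bounds $\gldim A < \infty$ (equivalently, the trivial module $R = A/A_{\ge 1}$ has a finite graded-projective resolution, read off from the helix). The AS-Gorenstein condition---that $\grExt^j_A(R, A)$ vanishes for $j \ne d$ and equals $R(\ell)$ (twisted by an automorphism) in degree $j = d$---is precisely the existence of $\omega_{\cC}$ from (AS1) together with the ``geometric'' shape of the helix encoding that $s^\ell \cO$ plays the role of a Serre-twist of $\cO$ by $\omega_{\cC}$; unwinding this via the equivalence $\tails A \simeq \cC$ translates the canonical bimodule back into the balanced dualizing complex of $A$, which for a noetherian/coherent connected-graded-over-$R$ algebra of finite global dimension is equivalent to AS-Gorenstein-ness.

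\medskip

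\textbf{Main obstacle.} The routine parts are the Artin--Zhang reconstruction and the bookkeeping around $\omega_{\cC}$; the genuinely delicate step is showing that the abstract categorical data (full geometric relative helix of period $\ell$ plus a canonical bimodule) forces the reconstructed $A$ to be \emph{regular}---i.e. of finite global dimension---rather than merely AS-Gorenstein. The difficulty is that finiteness of global dimension is an ``internal'' homological property of $A$, while a helix only sees $\cD^b(\cC) = \cD^b(\tails A)$, which has already quotiented out the finite-length (torsion) part; one must therefore lift the semiorthogonal decomposition coming from the helix back through the quotient functor $\mod^{\ZZ} A \to \tails A$ and control what happens on torsion modules, using the Gorenstein parameter to pin down the single ``extra'' piece $\cD^b(\mod R)$ that the helix period contributes. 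Managing that lift---and checking that ``geometric'' is exactly the hypothesis that makes it go through, with the relative (over $R$) Ext-finiteness doing the work that connectedness does in the classical ($R = k$) case---is where the real content of the proof lies, and I would expect to spend most of the argument there, likely invoking the earlier sections' development of relative AS-regularity and relative helices to do the heavy lifting.
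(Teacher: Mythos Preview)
Your overall architecture is right: Artin--Zhang reconstruction in one direction, and verifying (AS1)/(AS2) for $\tails A$ in the other. You also correctly identify the main obstacle in the converse direction, namely that the helix only controls $\cD^b(\tails A)$ while AS-regularity demands $\gldim A<\infty$. But your proposed resolution of that obstacle---``lift the semiorthogonal decomposition back through $\grmod A\to\tails A$ and control torsion''---is vague, and it is \emph{not} how the paper closes the gap. There is a concrete mechanism you are missing.

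The paper's key move is to pass through a graded-Morita-equivalent algebra whose global dimension is already known. Concretely: from one period of the helix form $T:=\bigoplus_{i=0}^{\ell-1}s^i\cO$. The full geometric relative helix hypothesis makes $T$ a \emph{regular tilting object} for $\cD^b(\cC)$, so $\End_{\cC}(T)$ is a (quasi-)Fano algebra of global dimension $\gldim\cC$, and ampleness of $(\cO,s)$ forces $B(\cC,T,-\otimes_{\cC}\omega_{\cC}^{-1})_{\geq 0}\cong \Pi(\End_{\cC}(T))$, the preprojective algebra, which is AS-regular of dimension $\gldim\cC+1$. The link to $A$ is the $\ell$-th quasi-Veronese: one has $A^{[\ell]}\cong B(\cC,T,s^{\ell})_{\geq 0}$, and since $(\cC,T,s^{\ell})\sim(\cC,T,-\otimes_{\cC}\omega_{\cC}^{-1})$ this is a Zhang twist of $\Pi(\End_{\cC}(T))$. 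Hence $\GrMod A\cong\GrMod A^{[\ell]}\cong\GrMod \Pi(\End_{\cC}(T))$, and $\gldim A=\gldim\cC+1$ drops out immediately---no SOD-lifting or torsion bookkeeping is needed.

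For the Gorenstein condition your sketch (``translate $\omega_{\cC}$ back into a balanced dualizing complex'') is again too coarse. The paper instead verifies ASF-regularity directly: it uses the standard exact sequence relating $\uH^q_{\fm}(A)$ to $\uH^{q-1}(\cA)=\bigoplus_j\Ext^{q-1}_{\cC}(\cO,s^j\cO)$, and computes the latter degree-by-degree using exactly the three ingredients encoded in the helix---geometricity for $j\geq 0$, the relative exceptional sequence for $-\ell<j<0$, and Serre duality (from $\omega_{\cC}$, with the period-$\ell$ condition $s^{\ell}\cO\cong\cO\otimes_{\cC}\omega_{\cC}^{-1}$) for $j\leq -\ell$. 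This yields $\uH^q(\cA)=0$ for $q\neq 0,n$, $\phi_A:A\to\uH^0(\cA)$ an isomorphism, and $\uH^n(\cA)\cong DA(\ell)$; ASF-regularity then gives AS-regularity over $A_0$ via the (EF)/local-duality route. In the forward direction, similarly, the paper does not invoke Orlov-type decompositions but rather the Minamoto--Mori structure theory (Beilinson algebra $\nabla A$ is Fano, $\Pi\nabla A$ governs $\grmod A$) to establish ampleness and the helix properties.
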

Roughly speaking, (AS1) requires that $\cC$ has an autoequivalence which induces a Serre functor for $\cD^b(\cC)$ (Definition \ref{dfn.cf}),
and (AS2)(b) requires that $\cD^b(\cC)$ has a ``relaxed'' version of a full geometric helix, consisting of shifts of a single object in $\cC$  (Definitions \ref{dfn.res}, \ref{dfn.rh}).

In the last section, we will give an application of the main result.
It is well-known that if $Q$ is a smooth quadric surface in $\PP^3$,
then there exists a noetherian AS-regular algebra $A=k\<x, y\>/(x^2y-yx^2, xy^2-y^2x)$ of dimension 3
such that $\coh Q$ is equivalent to the noncommutative projective scheme associated to $A$.
Using our main result, we will prove a noncommutative generalization of this result. 
Namely we will show that if $\cC$ is a smooth quadric surface in a quantum $\PP^3$ in the sense of Smith and Van den Bergh \cite{SV},
then there exists a right noetherian AS-regular algebra $A$ over $kK_2$ of dimension 3 such that
$\cC$ is equivalent to the noncommutative projective scheme associated to $A$ where $kK_2$ is the path algebra of the 2-Kronecker quiver $K_2$
(Theorem \ref{thm.sq}).

\subsection{Notation} 
In this subsection, we introduce some notation and terminology that will be used in this paper.
Throughout, let $k$ be a field. We assume that all algebras are over $k$.
For an algebra $R$, we denote by $\Mod R$ the category of right $R$-modules,
and by $\mod R$ the full subcategory consisting of finitely presented right $R$-modules.
Note that if $R$ is a finite dimensional algebra,
then $\mod R$ is simply the full subcategory consisting of finite dimensional $R$-modules.
We denote by $R^o$ the opposite algebra of $R$ and define $R^e := R^o \otimes_k R$.
For algebras $R,S$, $\Mod R^o$ is identified with the category of left $R$-modules,
and $\Mod(R^o \otimes_k S)$ is identified with the category of $R$-$S$ bimodules,
so that $\Mod R^e$ is identified with the category of $R$-$R$ bimodules.

For a vector space $V$ over $k$,
we denote by $DV = \Hom_k(V, k)$ the vector space dual of $V$ over $k$.
By abuse of notation, for a graded vector space $V=\bigoplus_{i \in \ZZ} V_i$,
we denote by $DV$ the graded vector space dual of $V$ defined as $(DV)_i =D(V_{-i})$ for $i \in \ZZ$.
We say that a graded vector space $V$ is locally finite if $\dim_k V_i < \infty$ for all $i \in \ZZ$.
In this case, we define the Hilbert series of $V$ by
$H_V (t) := \sum_{i \in \ZZ} (\dim_k V_i)t^i \in \ZZ[[t, t^{-1}]].$

In this paper, a graded algebra means a $\ZZ$-graded algebra over a field $k$,
although we mainly deal with $\NN$-graded algebras.
For a graded algebra $A$, we denote by $\GrMod A$ the category of graded right $A$-modules,
and by $\grmod A$ the full subcategory consisting of finitely presented graded right $A$-modules.
Morphisms in $\GrMod A$ are $A$-module homomorphisms preserving degrees.
For $M \in \GrMod A$ and a graded algebra automorphism $\s$ of $A$, 
we define the twist $M_\s \in \GrMod A$ by $M_\s = M$ as a graded $k$-vector space with the new right action
$m * a = m\s (a)$.

Let $A$ be a $\ZZ$-graded algebra, and $r\in \NN^+$.  
The $r$-th Veronese algebra of $A$ is defined by 
$$A^{(r)}:=\bigoplus _{i\in \ZZ}A_{ri},$$ 
and the 
$r$-th quasi-Veronese algebra of $A$ is defined by 
$$A^{[r]}:= \bigoplus_{i \in \ZZ} \begin{pmatrix}
A_{ri} &A_{ri+1} & \cdots & A_{ri+r-1} \\
A_{ri-1} & A_{ri} & \cdots & A_{ri+r-2} \\
\vdots & \vdots & \ddots & \vdots \\
A_{ri-r+1} & A_{ri-r+2} & \cdots & A_{ri}\end{pmatrix}$$
where the
multiplication of $A^{[r]}$ is given by
$(a_{ij})(b_{ij})=(\sum_ka_{kj}b_{ik})$ (see \cite {Mbc}).
There exists an
equivalence functor $Q:\GrMod A\to \GrMod A^{[r]}$ defined by
$$Q(M)= \bigoplus_{i \in \ZZ}
\begin{pmatrix}
M_{ri} \\ M_{ri-1} \\ \vdots \\M_{ri-r+1}
\end{pmatrix}$$
where the right action of $A^{[r]}$ on $Q(M)$ is
given by $(m_i)(a_{ij})=(\sum _km_ka_{ik})$ (see \cite {Mbc}).  

Let $A = \bigoplus_{i \in \NN} A_i$ be an $\NN$-graded algebra.
We say that $A$ is connected graded if $A_0 = k$.
For a graded module $M \in \GrMod A$ and an integer $n \in \ZZ$,
we define the truncation $M_{\geq n} := \bigoplus_{i \geq n} M_i \in \GrMod A$
and the shift $M(n) \in \GrMod A$ by $M(n)_i := M_{n+i}$ for $i \in \ZZ$.
The rule $M \mapsto  M(n)$ is a $k$-linear autoequivalence for $\GrMod A$,
called the shift functor. For $M, N \in \GrMod A$, we write the vector space
$\Ext^i_A(M, N) := \Ext^i_{\GrMod A}(M, N)$ and the graded vector space
$$\uExt^i_A(M, N) := \bigoplus_{n\in\ZZ} \Ext^i_A(M, N(n)).$$

Let $A, C$ be $\NN$-graded algebras. 
Then $C^o\otimes _kA$ becomes an $\NN$-graded algebra by setting $(C^o\otimes _kA)_n := \bigoplus_{i+j=n} C^o_i \otimes _k A_j$.
We define the left exact functor  $\uGa_\fm : \GrMod (C^o\otimes _kA) \to \GrMod (C^o\otimes _kA)$ by
$$\uGa_\fm (M) := \lim_{n \to \infty} \uHom_A(A/A_{\geq n}, M)$$
where $\fm = A_{\geq 1}$.
The derived functor of $\uGa_\fm$ is denoted by $\RuGa$, and its cohomologies are denoted by
$\uH^i_{\fm}(M) := h^i(\RuGa(M))$.
For $M \in \GrMod A$, the depth of $M$ is defined to be
$\depth M :=\inf\{i \mid \uH_\fm^i(M) \neq 0\}$.
The local cohomological dimension of $M$ is defined to be
$\lcd M :=\sup\{i \mid \uH_\fm^i(M) \neq 0 \}$.
The cohomological dimension of $\uGa_\fm$ is defined by
$$ \cd(\uGa_\fm) := \sup \{ \lcd M \mid M \in \GrMod A \}. $$
We say that $A$ has finite cohomological dimension if $\cd(\uGa_\fm)<\infty$.  Note that if $A$ has finite global dimension, then it has finite cohomological dimension.  

For an abelian category $\cC$, we denote by $\cD(\cC)$ the derived category of $\cC$ and by $\cD^b(\cC)$ the bounded derived category of $\cC$.
For $\cM, \cN\in \cD(\cC)$, we often write $\Hom_{\cC}(\cM, \cN):=\Hom_{\cD(\cC)}(\cM, \cN)$ by abuse of notation.
For $\cM, \cN\in \cD(\cC)$ and $i \in \ZZ$, we set $\Ext^i_{\cC}(\cM, \cN)=\Hom_{\cD(\cC)}(\cM, \cN[i])$.

Connected graded AS-regular algebras defined below are the most important class of algebras in noncommutative algebraic geometry.
\begin{definition} \label{def.AS}
A locally finite connected graded algebra $A$ is called 
AS-regular (resp. AS-Gorenstein) of dimension $d$ and of Gorenstein parameter $\ell$ if the following conditions are satisfied: 
\begin{enumerate}
\item{} $\gldim A=d<\infty$ (resp. $\injdim_A A = \injdim_{A^o}A =d<\infty$), and
\item{} $\RuHom_A(k, A) \cong \RuHom_{A^o}(k, A) \cong k(\ell)[-d]$ in $\cD(\GrMod k)$. 
\end{enumerate}
\end{definition}

It is well-known that if $A$ is a noetherian AS-Gorenstein algebra of dimension $d$ and of Gorenstein parameter $\ell$,
then $A$ has a balanced dualizing complex $D\RuGa(A) \cong A_{\nu}(-\ell)[d]$ in $\cD(\GrMod A^e)$
with some graded algebra automorphism $\nu$ of $A$ (see \cite {Ye}).
This graded algebra automorphism $\nu$ is called the (generalized) Nakayama automorphism of $A$.
The graded $A$-$A$ bimodule $\omega_A := A_\nu (-\ell) \in \GrMod A^e$ is called the canonical module over $A$.

Let us recall the definition of graded coherentness.
\begin{definition}
\begin{enumerate}
\item A graded right $A$-module $M$ is called graded right coherent if it is finitely generated and every finitely generated graded submodule of $M$ is finitely presented over $A$.
\item A locally finite $\NN$-graded algebra $A$ is called graded right coherent if  $A$ and $A/A_{\geq 1}$ are graded right coherent modules.
\end{enumerate}
\end{definition}

Let $A$ be a graded right coherent algebra.  Then a graded right $A$-module is finitely presented if and only if it is graded right coherent.
In this case, $\grmod A$ is an abelian category.

\begin{proposition}  [{cf. \cite [Proposition 1.9]{Ye}}] \label{prop.coh}
If $A$ is a graded right coherent algebra, then every finite dimensional graded right $A$-module is graded right coherent.
\end{proposition}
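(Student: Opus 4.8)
The plan is to reduce the general case to the one-dimensional case and then to build up any finite dimensional module by a finite filtration whose factors are handled by graded coherentness of $A/A_{\geq 1}$. First I would observe that a finite dimensional graded right $A$-module $M$ is automatically finitely generated, so the only issue is to show that every finitely generated graded submodule $N \subseteq M$ is finitely presented; since $N$ is again finite dimensional, it suffices to show directly that every finite dimensional graded $A$-module is finitely presented over $A$. I would proceed by induction on $\dim_k M$. The base case $M = 0$ is trivial, and the case $\dim_k M = 1$ means $M \cong (A/A_{\geq 1})(n)$ for some $n \in \ZZ$ (using that $A$ is connected $\NN$-graded, so $A_0 = k$ acts on a one-dimensional module through a single degree); this is finitely presented because $A/A_{\geq 1}$ is graded right coherent by hypothesis, and the shift functor preserves finite presentation.

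For the inductive step, given $M$ with $\dim_k M = m \geq 2$, I would choose a nonzero homogeneous element spanning a one-dimensional graded submodule $M' \subseteq M$ (e.g. take a nonzero element of lowest degree in $M$, which is killed by $A_{\geq 1}$), so that $M'$ and $M/M'$ are finite dimensional graded $A$-modules of strictly smaller dimension. By the induction hypothesis both $M'$ and $M/M'$ are finitely presented over $A$. Now I would invoke the standard "two-out-of-three" stability of finite presentation inside an abelian-type setting: in the short exact sequence $0 \to M' \to M \to M/M' \to 0$ of graded $A$-modules, if the outer two terms are finitely presented then so is the middle term. Concretely, lift finite graded presentations of $M'$ and $M/M'$ and use the horseshoe lemma to splice them into a finite graded presentation $A^{(q)} \to A^{(p)} \to M \to 0$ with $p, q < \infty$; this uses only that $\GrMod A$ has enough graded projectives of the form finite direct sums of shifts $A(n)$, which holds for any graded algebra.

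The step I expect to require the most care is verifying that the "horseshoe" splicing actually produces a \emph{finite} presentation, i.e. that graded finitely presented modules are closed under extensions; this is where one must be slightly careful about the grading, making sure the shifts $A(n)$ appearing in the presentations of $M'$ and $M/M'$ are finite in number and can be combined compatibly. Once that closure property is in hand, the induction closes immediately, and then the opening reduction shows that every finitely generated (equivalently, finite dimensional) graded submodule of a finite dimensional $M$ is finitely presented, which is exactly the statement that $M$ is graded right coherent. I would remark that this argument is essentially the graded analogue of the proof of \cite[Proposition 1.9]{Ye} and uses the coherentness of $A$ only through the coherentness of the simple-type module $A/A_{\geq 1}$.
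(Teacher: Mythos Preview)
Your overall strategy---reducing to finite presentation and then building up via extensions---matches the paper's approach, but your base case has a genuine gap: you assume $A$ is connected graded ($A_0 = k$), which is not part of the hypothesis. In this paper a graded right coherent algebra is only assumed to be locally finite $\NN$-graded, and in fact the main applications concern AS-regular algebras over $R = A_0$ where $R$ is an arbitrary finite dimensional algebra of finite global dimension. When $A_0 \neq k$, a one-dimensional $k$-subspace of $M$ need not be an $A$-submodule (it must be closed under the $A_0$-action), and $A/A_{\geq 1} \cong A_0$ is typically not one-dimensional, so your identification ``one-dimensional module $\cong (A/A_{\geq 1})(n)$'' fails.

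The fix, which is exactly what the paper does, is to replace ``one-dimensional module'' by ``graded simple module'' in the base case: every graded simple $S$ admits a surjection $A/A_{\geq 1}(j) \to S$ for some $j$; the kernel $K$ is finite dimensional, hence a finitely generated submodule of the coherent module $A/A_{\geq 1}(j)$, so $K$ is finitely presented. Then $S$ is finitely presented as the cokernel of a map between finitely presented modules (the paper phrases this by saying $S \in \grmod A$ since $\grmod A$ is abelian). Your extension/horseshoe argument then finishes the job unchanged, since every finite dimensional graded module has a finite filtration with graded simple subquotients.
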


\begin{proof}
If $S$ is a graded simple right $A$-module, then there exists a surjection $A/A_{\geq 1}(j) \to S$ for some $j\in \ZZ$.
Let $K$ be the kernel of this map. Since it 
is finite dimensional, it is a finitely generated submodule of $A/A_{\geq 1}(j)$.
Since $A/A_{\geq 1}(j)$ is graded right coherent, $K$ is finitely presented.
Since $\grmod A$ is an abelian category, $S$ is graded right coherent.
Since every finite dimensional module is a finite extension of graded simple right $A$-modules, the result follows.
\end{proof}

Let $A$ be a graded right coherent algebra. We denote by $\tors A$ the full subcategory of $\grmod A$ consisting of finite dimensional modules.
By Proposition \ref{prop.coh}, $\tors A$ is a Serre subcategory of $\grmod A$, so the quotient category
$$\tails A:= \grmod A/\tors A$$
is an abelian category.
If $A$ is a commutative graded algebra finitely generated in degree $1$ over $k$, then
$\tails A$ is equivalent to the category $\coh (\Proj A)$ of coherent sheaves on the projective scheme $\Proj A$
by Serre's theorem \cite{Se}. For this reason, $\tails A$ is called the noncommutative projective scheme associated to $A$ (see \cite{AZ} for details).

The quotient functor is denoted by $\pi: \grmod A \to \tails A$. We usually denote by $\cM = \pi M \in \tails A$ the image of $M \in \grmod A$.
Note that the $k$-linear autoequivalence $M \to M(n)$ preserves torsion modules,
so it induces a $k$-linear autoequivalence $\cM \to \cM(n)$ for $\tails A$, again called the shift functor.
For $\cM, \cN \in \tails A$, we write the vector space $\Ext^i_\cA(\cM, \cN) := \Ext^i_{\tails A}(\cM, \cN)$
and the graded vector space
$$\uExt^i_{\cA}(\cM, \cN) := \bigoplus_{n \in \ZZ} \Ext^i_\cA(\cM, \cN(n))$$
as before. 

For an abelian category $\cC$, we define the global dimension of $\cC$ by
$$ \gldim \cC := \sup\{i \mid \Ext^i_\cC(\cM, \cN) \neq 0 \; \textnormal{for some} \; \cM, \cN \in \cC \}. $$
The notion of graded isolated singularity for a noncommutative connected graded algebra $A$ has been defined using the noncommutative projective scheme $\tails A$
(see {\cite{U}, \cite{Jcm}).
\begin{definition}
A graded right coherent connected graded algebra $A$ is called a graded isolated singularity if $\gldim (\tails A) < \infty$.
\end{definition}

\section{Preliminaries} 

\subsection{Ampleness} 

The ampleness of a line bundle is essential to construct a homogeneous coordinate ring of a  projective scheme in commutative algebraic geometry.  We will define a notion of ampleness in noncommutative algebraic geometry.  

\begin{definition}
\begin{enumerate}
\item An algebraic triple consists of a $k$-linear category $\cC$, an object $\cO\in \cC$, and a $k$-linear autoequivalence $s\in \Aut _k\cC$.  In this case, we also say that $(\cO, s)$ is an algebraic pair for $\cC$.
\item A morphism of algebraic triples $(F, \theta, \mu):(\cC, \cO, s)\to (\cC', \cO', s')$ consists of a $k$-linear functor $F:\cC\to \cC'$, an isomorphism $\theta :F(\cO)\to \cO'$ and a natural transformation $\mu :F\circ s\to s'\circ F$.
\item Two algebraic triples $(\cC, \cO, s)$ and $(\cC', \cO', s')$ are isomorphic, denoted by 
$(\cC, \cO, s)\cong (\cC', \cO', s')$
if there exists a morphism of algebraic triples $(F, \theta, \mu):(\cC, \cO, s)\to (\cC', \cO', s')$ such that $F$ is an equivalence functor and $\mu$ is a natural isomorphism.
\item Two algebraic triples $(\cC, \cO, s)$ and $(\cC', \cO', s')$ are equivalent, denoted by
$(\cC, \cO, s)\sim (\cC', \cO', s')$
if there exists an equivalence functor $F:\cC\to \cC'$ such that $F(s^i\cO)\cong (s')^i\cO'$ for all $i\in \ZZ$.  
\item For an algebraic triple $(\cC, \cO, s)$, we define a graded algebra by 
$$B(\cC, \cO, s):=\bigoplus _{i\in \ZZ}\Hom_{\cC}(\cO, s^i\cO)$$
where the multiplication is given by the following rule: for $\a\in B(\cC,\cO, s)_i=\Hom_{\cC}(\cO, s^i\cO)$ and $\b\in B(\cC, \cO, s)_j=\Hom_{\cC}(\cO, s^j\cO)$, we define 
$\a\b:=s^j(\a)\circ \b\in \Hom_{\cC}(\cO, s^{i+j}\cO)=B(\cC, \cO, s)_{i+j}$.
\item For an object $\cM$ in $\cC$, we define a graded right $B(\cC, \cO, s)$-module
$$\uH^q(\cM):=\bigoplus _{i\in \ZZ}\Ext_{\cC}^q(\cO, s^i\cM)=\bigoplus _{i\in \ZZ}\Hom_{\cC}(\cO, s^i\cM[q])$$
where the right action is given by the following rule: for $\a\in \uH^q(\cM)_i=\Hom_{\cC}(\cO, s^i\cM[q])$ and $\b\in B(\cC, \cO, s)_j=\Hom_{\cC}(\cO, s^j\cO)$, we define $\a\b:=s^j(\a)\circ \b\in \Hom_{\cC}(\cO, s^{i+j}\cM[q])=\uH^q(\cM)_{i+j}$.
\item We define a graded left $B(\cC, \cO, s)$-module structure on 
$$\uH^q(\cO) =\bigoplus _{i\in \ZZ}\Hom_{\cC}(\cO, s^i\cO[q]) = \bigoplus _{i\in \ZZ}\Hom_{\cC}(\cO[-q], s^i\cO)$$
by the following rule:
for $\a \in B(\cC, \cO, s)_i=\Hom_{\cC}(\cO, s^i\cO)$ and $\b\in \uH^q(\cO)_j=\Hom_{\cC}(\cO[-q], s^j\cO)$,
we define $\a\b:=s^j(\a)\circ \b \in \Hom_{\cC}(\cO[-q], s^{i+j}\cO)=\uH^q(\cO)_{i+j}$.
\end{enumerate}
\end{definition} 

\begin{example} For an algebraic triple $(\cC, \cO, s)$ and $r\in \NN^+$, if $A=B(\cC, \cO, s)$, then $A^{(r)}\cong B(\cC, \cO, s^r)$ and $A^{[r]}\cong B(\cC, \bigoplus _{i=0}^{r-1}s^i\cO, s^r)$ (see \cite {Mbc}). 
\end{example}

\begin{remark} \label{rem.mat} 
A morphism of algebraic triples $(F, \theta, \mu):(\cC, \cO, s)\to (\cC', \cO', s')$ induces a map 
\begin{align*}
B(\cC, \cO, s)_i&=\Hom_{\cC}(\cO, s^i\cO) 
\to \Hom_{\cC'}(F(\cO), F(s^i\cO)) 
\to \Hom_{\cC'}(\cO', (s')^i\cO')=B(\cC', \cO', s')_i
\end{align*}
for every $i\in \ZZ$, which induces a graded algebra homomorphism $B(\cC, \cO, s)\to B(\cC', \cO', s')$.
In particular, if $(\cC, \cO, s)\cong (\cC', \cO', s')$, then $B(\cC, \cO, s)\cong B(\cC', \cO', s')$ as graded algebras.  
\end{remark}

\begin{example} \label{ex.phi}
If $A$ is a graded right coherent algebra, then $\pi :\grmod A\to \tails A$ induces a morphism of algebraic triples $(\grmod A, A, (1))\to (\tails A, \cA, (1))$, which induces a graded algebra homomorphism 
$$\phi _A:A\cong B(\grmod A, A, (1))\to B(\tails A, \cA, (1)).$$
Moreover, for $M\in \grmod A$, we have a graded right $A$-module homomorphism
$$\phi _M:M\cong \uH^0(M)\to \uH^0(\cM)$$
where we view $\uH^0(\cM)$ as a graded right $A$-module via $\phi_A$. 
\end{example} 

The following notion of ampleness introduced in \cite{AZ} is a key concept in noncommutative projective geometry.  

\begin{definition}    
We say that an algebraic pair $(\cO, s)$ for a $k$-linear abelian category $\cC$ is ample if 
\begin{enumerate}
\item[(A1)] for every $\cM\in \cC$, there exists an epimorphism $\bigoplus _{j=1}^ps^{-i_j}\cO\to \cM$ in $\cC$ for some $i_1, \dots, i_p\geq 0$, and  
\item[(A2)] for every epimorphism $\phi:\cM\to \cN$ in $\cC$, there exists $m\in \ZZ$ such that 
$$\Hom_{\cC}(s^{-i}\cO, \phi):\Hom_{\cC}(s^{-i}\cO, \cM)\to \Hom_{\cC}(s^{-i}\cO, \cN)$$
is surjective for every $i\geq m$.  
\end{enumerate}
\end{definition} 

A $k$-linear category $\cC$ is called $\Hom$-finite if $\dim _k\Hom_{\cC}(\cM, \cN)<\infty$ for every $\cM, \cN\in \cC$.  
The following theorem is a ``coherent'' version of \cite[Theorem 4.5]{AZ}.

\begin{theorem} \label{thm.AZ} 
The following statements hold.
\begin{enumerate}
\item 
Let $A$ be a graded algebra. If
\begin{enumerate}
\item $A$ is graded right coherent, and
\item for any $\cM \in \tails A$ and any $n \in \ZZ$, 
$\uH^0(\cM)_{\geq n}$ is graded right coherent,
\end{enumerate}
then $\tails A$ is $\Hom$-finite $k$-linear abelian category and $(\cA, (1))$ is an ample pair for $\tails A$. 

\item 
Conversely, if $(\cO, s)$ is an ample pair for a $\Hom$-finite $k$-linear abelian category $\cC$, then
\begin{enumerate}
\item $A := B(\cC, \cO, s)_{\geq 0}$ is a graded right coherent algebra,
\item for any $\cM \in \cC$ and any $n \in \ZZ$, $\uH^0(\cM)_{\geq n}$
is graded right coherent, and
\item the functor $\cC \to \tails A; \; \cM \mapsto  \pi \uH^0(\cM)_{\geq 0}$
induces an isomorphism of algebraic triples $(\cC,\cO, s) \cong (\tails A,\cA, (1))$.
\end{enumerate}
\end{enumerate}
\end{theorem}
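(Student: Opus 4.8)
The two directions of the theorem are essentially mutually inverse, with (2)(c) recording their compatibility, so I would prove (1) and (2) and then read (2)(c) off the construction in (2). The engine throughout is the interplay between the quotient functor $\pi$ and the section functor $\uH^0$ attached to the triple $(\tails A,\cA,(1))$: the counit isomorphism $\pi\,\uH^0(\cN)\cong\cN$ natural in $\cN\in\tails A$, the adjunction $\Hom_{\tails A}(\pi M,\cN)\cong\Hom_{\GrMod A}(M,\uH^0(\cN))$ (with $\uH^0(\cN)$ regarded as an $A$-module via $\phi_A$), and the canonical comparison maps of Example~\ref{ex.phi}; in substance everything is the coherent-category refinement of \cite{AZ}, rerunning the Artin--Zhang arguments with ``finitely generated/presented'' and ``graded coherent'' in place of ``noetherian''. \emph{For (1):} since $A$ is graded right coherent, $\grmod A$ is abelian and $\tors A$ is a Serre subcategory by Proposition~\ref{prop.coh}, so $\tails A$ is a $k$-linear abelian category; (A1) for $(\cA,(1))$ is immediate, since each $M\in\grmod A$ admits a surjection $\bigoplus_{j=1}^{p}A(-i_j)\to M$ with $i_j\ge 0$ and $\pi$ is exact. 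For (A2), given a surjection $\cM\to\cN$ in $\tails A$ with kernel $\cK$, left-exactness of $\uH^0$ gives an exact sequence $0\to\uH^0(\cK)\to\uH^0(\cM)\xrightarrow{g}\uH^0(\cN)$, and since $\pi$ is exact with $\pi\,\uH^0(-)\cong\id$ the module $C:=\coker g$ satisfies $\pi C=0$, i.e.\ $C$ is torsion; but $C_{\ge n}$ is a quotient of the graded right coherent (hence finitely generated) module $\uH^0(\cN)_{\ge n}$ by (1b), and a finitely generated torsion graded module vanishes in all high degrees, which is exactly (A2) after the identification $\uH^0(\cM)_i=\Hom_{\tails A}(\cA(-i),\cM)$. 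Finally $\tails A$ is $\Hom$-finite: $\Hom_{\tails A}(\pi M,\cN)\cong\Hom_{\GrMod A}(M,\uH^0(\cN))$ with $M$ finitely generated, and by (1b) applied with $n$ arbitrarily negative every homogeneous component of $\uH^0(\cN)$ is a finitely generated, hence finite dimensional, module over the locally finite algebra $A$.

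\emph{For (2):} put $B:=B(\cC,\cO,s)$ and $A:=B_{\ge 0}$; by $\Hom$-finiteness $A_0=\End_\cC(\cO)$ is finite dimensional, so $A$ is a locally finite $\NN$-graded algebra. \emph{Step 1 (uniform finite generation):} for every $\cM\in\cC$ and $n\in\ZZ$, $\uH^0(\cM)_{\ge n}$ is a finitely generated graded right $A$-module. Indeed, (A1) furnishes a surjection $P:=\bigoplus_{j=1}^{r}s^{-i_j}\cO\to\cM$ with all $i_j\ge 0$; each $\uH^0(s^{-i}\cO)_{\ge 0}$ is finitely generated over $A$ (it coincides with $A(-i)$ in degrees $\ge i$ and is finite dimensional below), so $\uH^0(P)_{\ge 0}$ is finitely generated; then (A2) applied to $P\to\cM$ makes $\uH^0(P)\to\uH^0(\cM)$ surjective in all high degrees, and the remaining finitely many low-degree generators of $\uH^0(\cM)_{\ge n}$ are supplied directly. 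This is the \cite{AZ} computation for the section module, and it uses only ampleness and $\Hom$-finiteness. \emph{Step 2 (coherence):} $A$ is graded right coherent. Since $A_0=A/A_{\ge 1}$ is finite dimensional, it suffices to show that every finitely generated right ideal $I\subseteq A$ is finitely presented; one transports $\pi I\hookrightarrow\pi A$ through the comparison maps into an inclusion $\cI\hookrightarrow\cO$ of objects of $\cC$ (identifying $\pi A$ with $\cO$ via (A1), (A2)), and then recovers a finite presentation of $I$ from presentations of $\cI$ and $\cO/\cI$ in $\cC$ together with Step 1 applied to these subquotients.

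With Steps 1 and 2 in hand, the functor $F\colon\cC\to\tails A$, $F(\cM):=\pi\bigl(\uH^0(\cM)_{\ge 0}\bigr)$, is well defined, $k$-linear, and exact ($\pi$ is exact and the failure of right-exactness of the section functor is torsion by (A2)); it sends $\cO$ to $\cA$ and intertwines $s$ with the shift $(1)$ up to a natural isomorphism $\mu$ coming from the definition of the $A$-action, so $(F,\theta,\mu)$ is a morphism of algebraic triples $(\cC,\cO,s)\to(\tails A,\cA,(1))$. It is fully faithful because $\Hom_{\tails A}(F\cM,F\cN)\cong\Hom_{\GrMod A}\bigl(\uH^0(\cM)_{\ge 0},\uH^0(F\cN)\bigr)$ is identified with $\Hom_\cC(\cM,\cN)$ through the comparison maps and (A1), (A2) (the $\chi_1$-type computation of \cite{AZ}), and essentially surjective because, by part (1), every object of $\tails A$ is a quotient of a finite sum of twists $\cA(i)=F(s^{-i}\cO)$. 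Hence $F$ is an equivalence realizing $(\cC,\cO,s)\cong(\tails A,\cA,(1))$, which is (2)(c); and (2)(a), (2)(b) are precisely Steps 2 and 1.

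\emph{Main obstacle.} The substantive new point is Step 2: extracting graded right coherence of $A=B(\cC,\cO,s)_{\ge 0}$ from ampleness and $\Hom$-finiteness alone, with no noetherian hypothesis available to transfer from $\cC$. One must show that a finitely generated ideal of $A$ becomes finitely presented once it is realized as a subobject of $\cO$ in $\cC$ and pulled back, which forces the finite-generation statement of Step 1 to be invoked for every subquotient of $\cO$ that appears and requires checking that the truncations $\uH^0(-)_{\ge n}$ behave compatibly under passage to subobjects and quotients. By contrast, part (1) and Step 3 are routine adaptations of \cite{AZ}.
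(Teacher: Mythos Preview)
Your proof of part (1) is correct but takes a different route from the paper's for (A2). The paper lifts the surjection $\phi:\cM\to\cN$ to a genuine surjection $\psi:M\to N$ in $\grmod A$ (after truncation), and then uses the local-cohomology exact sequence
\[0\to \uH^0_\fm(M)_{\ge n}\to M_{\ge n}\to \uH^0(\cM)_{\ge n}\to \uH^1_\fm(M)_{\ge n}\to 0\]
together with hypothesis (b) to conclude that $M_{\ge n}\cong\uH^0(\cM)_{\ge n}$ for $n\gg 0$, and similarly for $N$; surjectivity of $\Hom_{\cA}(\cA(-i),\phi)$ then follows from surjectivity of $\psi_i$. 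Your argument bypasses the lift entirely: you stay in the big categories, use the counit isomorphism $\pi\,\uH^0\cong\id_{\Tails A}$ to see that $C=\coker(\uH^0(\cM)\to\uH^0(\cN))$ is torsion, and then invoke (b) only to get finite generation of $C_{\ge n}$. This is cleaner and more conceptual; the paper's approach has the minor advantage of extracting a little more (the comparison $M_{\ge n}\cong\uH^0(\cM)_{\ge n}$) which is reused later in the proof of Theorem~\ref{thm.main}. One small slip in your (A1): an arbitrary $M\in\grmod A$ need not be generated in non-negative degrees, so you must first replace $M$ by $M_{\ge 0}$ (which is still coherent, by Proposition~\ref{prop.coh}).

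For part (2), the paper gives no argument at all: it simply cites Polishchuk \cite[Proposition~2.3, Theorem~2.4]{P} (and \cite[Theorem~4.5]{AZ}). Your sketch goes further than the paper does, and you have correctly isolated the genuine difficulty as Step~2 (coherence of $A$). Be aware, though, that your reduction ``since $A_0$ is finite dimensional, it suffices to show every finitely generated right ideal of $A$ is finitely presented'' does not quite cover the definition used in the paper, which also requires $A/A_{\ge 1}$ to be a coherent module; this needs $A_{\ge 1}$ to be finitely generated, which is not automatic and must come out of the same circle of arguments. Polishchuk's proof handles coherence of $A$ and of $A_0$ simultaneously by showing that the essential image of $\uH^0(-)_{\ge 0}$ is closed under kernels, so if you want a self-contained argument you should organize Step~2 that way rather than ideal-by-ideal.
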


\begin{proof}
(1) First we check that $\tails A$ is $\Hom$-finite.
It is enough to show that $\Hom_\cA(\cA, \cM)=\uH^0(\cM)_0$ is finite dimensional over $k$ for any $M \in \grmod A$.
The condition (b) says that $\uH^0(\cM)_{\geq 0}$ is graded right coherent,
so we have a surjection $F \to \uH^0(\cM)_{\geq 0}$ in $\grmod A$ where $F$ is a finitely generated graded free right $A$-module.
Since $A$ is locally finite, we have $\dim_k \uH^0(\cM)_0 < \infty$.

To prove that $(\cA, (1))$ satisfies (A1), it is enough to check that there exist positive integers $i_1, \dots, i_p \in \NN^+$ and an epimorphism $\bigoplus_{j=1}^p \cA(-i_j) \to \cA$.
Since $A$ is graded right coherent, we have an exact sequence
\[ \bigoplus_{j=1}^p A(-i_j) \to A \to A/A_{\geq 1} \to 0\]
in $\grmod A$. Since $A/A_{\geq 1} \in \tors A$, this induces a desired epimorphism, so (A1) follows.

We next show that $(\cA, (1))$ satisfies (A2). First, note that, for every $M \in \grmod A$ and every $n\in \ZZ$, 
since $A$ is graded right coherent and $M/M_{\geq n}$ is finite dimensional, $M/M_{\geq n}$ is graded right coherent by Proposition \ref{prop.coh}, so $M_{\geq n}$ is also graded right coherent.
Let $\phi: \cM\to \cN$ be an epimorphism in $\tails A$.
Then there exists a homomorphism $\psi: M' \to N/N'$ in $\grmod A$ such that $M/M', N', \coker \psi \in \tors A$, and $\pi (\psi)= \phi$.
It follows that
$$\xymatrix@R=2pc@C=2pc{ M_{\geq n} \ar[r]^{\cong} &M'_{\geq n} \ar[r]^{\psi_{\geq n}} &N/N'_{\geq n} \ar[r]^{\cong} &N_{\geq n} }$$
are surjective for all $n \gg 0$.  Since $\pi M_{\geq n}\cong \pi M, \pi N_{\geq n}\cong \pi N$, we may assume that there exists an epimorphism $\psi:M\to N$ such that $\pi (\psi)=\phi$ by replacing $M, N$ by $M_{\geq n}, N_{\geq n}$. 

An exact sequence $0 \to A_{\geq i} \to A \to A/A_{\geq i} \to 0$ induces the following exact sequence
$$\begin{CD} 0 \to \uH^0_{\fm}(M)_{\geq n} \to M_{\geq n} @>(\phi_M)_{\geq n}>> \uH^0(\cM)_{\geq n} \to \uH^1_{\fm}(M)_{\geq n} \to 0 \end{CD}$$
of graded right $A$-modules (see Example \ref{ex.phi}).  
Since the two middle terms in the above sequence are graded right coherent, we see that $\uH^0_{\fm}(M)_{\geq n}$ and $\uH^1_{\fm}(M)_{\geq n}$ are graded right coherent.
Moreover, since $\uH^0_{\fm}(M)$ and $\uH^1_{\fm}(M)$ are $\fm$-torsion modules, so are $\uH^0_{\fm}(M)_{\geq n}$ and $\uH^1_{\fm}(M)_{\geq n}$.
These imply that $\uH^0_{\fm}(M)_{\geq n}$ and $\uH^1_{\fm}(M)_{\geq n}$ are finite dimensional over $k$.
Hence 
$(\phi _M)_{\geq n}:M_{\geq n} \to \uH^0(\cM)_{\geq n}$
is an isomorphism in $\grmod A$ for every $n \gg 0$.  By applying the same argument for $N$, there exists $m\in \ZZ$ such that both $(\phi _M)_{\geq m}:M_{\geq m} \to \uH^0(\cM)_{\geq m}$ and $(\phi _N)_{\geq m}:N_{\geq m} \to \uH^0(\cN)_{\geq m}$ are isomorphisms in $\grmod A$. 
Since we have the commutative diagram
\[\xymatrix@C=2.5pc@R=2pc{
&\uH^0(M)_{\geq m} \ar@{=}[d]&\uH^0(\cM)_{\geq m}\ar@{=}[d] \\
(\phi_M)_{\geq m}: M_{\geq m} \ar[r]^{\cong} \ar@{->>}@<5ex>[d]_{(\psi)_{\geq m}} &\uHom_A(A, M)_{\geq m} \ar[r]^{\cong} \ar@{->>}[d]^{\uHom_A(A, \psi)_{\geq m}} &\uHom_{\cA}(\cA, \cM)_{\geq m} \ar@{->>}[d]^{\uHom_{\cA}(\cA, \phi)_{\geq m}}\\
(\phi _N)_{\geq m}: N_{\geq m} \ar[r]^{\cong} &\uHom_{A}(A, N)_{\geq m} \ar@{=}[d] \ar[r]^{\cong}&\uHom_{\cA}(\cA, \cN)_{\geq m} \ar@{=}[d]\\
&\uH^0(N)_{\geq m} &\uH^0(\cN)_{\geq m},
}
\]
it follows that 
$$\Hom_{\cA}(\cA(-i), \phi): \Hom_\cA(\cA(-i), \cM) \cong 
\uH^0(\cM)_i \to \uH^0(\cN)_i
\cong \Hom_\cA(\cA(-i), \cN)$$
is surjective for every $i \geq m$.

(2) This follows from \cite[Proposition 2.3, Theorem 2.4]{P} (see also \cite[Theorem 4.5]{AZ}). 
\end{proof}

\begin{definition} Let $A$ be a graded algebra.  A twisting system on $A$ is a sequence $\theta=\{\theta_i\}_{i\in \ZZ}$ of graded $k$-linear automorphisms of $A$ such that $\theta_i(x\theta_j(y))=\theta_i(x)\theta_{i+j}(y)$ for every $i, j\in \ZZ$ and every $x\in A_j, y\in A$.  The twisted graded algebra of $A$ by a twisting system $\theta $ is a graded algebra $A^{\theta}$ where $A^{\theta}=A$ as a graded $k$-vector space with the new multiplication $x*y=x\theta_j(y)$ for $x\in A_j, y\in A$.  
\end{definition} 

If $\s\in \GrAut A$ is a graded algebra automorphism of $A$, then $\{\s^i\}_{i\in \ZZ}$ is a twisting system of $A$.
In this case, we simply write $A^{\s}:=A^{\{\s^i\}}$.  If $B$ is a twisted graded algebra of $A$ by a twisting system, then $\GrMod A\cong \GrMod B$ by \cite {Zh}. 

\begin{lemma} \label{lem.equi}
Let $\cC$ and $\cC'$ be $k$-linear abelian categories.
If $(\cC, \cO, s)$ and $(\cC', \cO', s')$ are equivalent algebraic triples, then the following hold.
\begin{enumerate}
\item{} 
$B(\cC', \cO', s')_{\geq 0}$ is a twisted graded algebra of $B(\cC, \cO, s)_{\geq 0}$ by a twisting system so that $\GrMod B(\cC, \cO, s)_{\geq 0}\cong \GrMod B(\cC', \cO', s')_{\geq 0}$.
\item{}  $(\cO, s)$ is ample for $\cC$ if and only if $(\cO', s')$ is ample for $\cC'$. 
\end{enumerate}
\end{lemma}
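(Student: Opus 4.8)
The plan is to unwind the definition of equivalence of algebraic triples and push the structure through degree by degree. Recall that $(\cC,\cO,s)\sim(\cC',\cO',s')$ means there is an equivalence $F:\cC\to\cC'$ with $F(s^i\cO)\cong(s')^i\cO'$ for all $i\in\ZZ$; for each $i$ fix such an isomorphism $\eta_i:F(s^i\cO)\xrightarrow{\ \sim\ }(s')^i\cO'$, and normalize $\eta_0=\theta$. For part (1), I would define the candidate twisting system on $B:=B(\cC,\cO,s)_{\geq 0}$ as follows. Since $F$ is a $k$-linear equivalence, for $\a\in\Hom_\cC(\cO,s^i\cO)$ the composite $\eta_i\circ F(\a)\circ\eta_0^{-1}$ lies in $\Hom_{\cC'}(\cO',(s')^i\cO')=B(\cC',\cO',s')_i$; this gives a $k$-linear isomorphism $B_i\cong B(\cC',\cO',s')_i$ for each $i$. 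These isomorphisms need not be multiplicative on the nose, and the defect is precisely measured by the discrepancy between $F\circ s$ and $s'\circ F$ composed with the $\eta_i$; encoding this discrepancy as graded automorphisms $\theta_j$ of $B$ is what exhibits $B(\cC',\cO',s')_{\geq 0}$ as a twisted algebra $B^{\theta}$. The cocycle identity $\theta_i(x\theta_j(y))=\theta_i(x)\theta_{i+j}(y)$ should drop out of the associativity of composition in $\cC'$ together with the functoriality of $F$. Once $B(\cC',\cO',s')_{\geq 0}\cong B^{\theta}$, the equivalence $\GrMod B\cong\GrMod B(\cC',\cO',s')_{\geq 0}$ is immediate from \cite{Zh}.

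For part (2), I would argue that ampleness is a statement about the category $\cC$ together with the objects $\{s^i\cO\}_{i\in\ZZ}$, and every piece of data in conditions (A1) and (A2) is visibly transported by an equivalence $F$ that carries $s^i\cO$ to (an object isomorphic to) $(s')^i\cO'$. Concretely: given (A1) for $(\cO,s)$, take $\cM'\in\cC'$, set $\cM:=F^{-1}(\cM')$, pull back a surjection $\bigoplus_{j=1}^p s^{-i_j}\cO\to\cM$ through $F$, and use the isomorphisms $F(s^{-i_j}\cO)\cong(s')^{-i_j}\cO'$ and $F(\cM)\cong\cM'$ to obtain the required surjection in $\cC'$ — using that $F$, being an equivalence, is exact and hence preserves surjections. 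For (A2), given a surjection $\phi':\cM'\to\cN'$ in $\cC'$, transport it to $\phi:=F^{-1}(\phi')$, apply (A2) to get $m\in\ZZ$ with $\Hom_\cC(s^{-i}\cO,\phi)$ surjective for $i\geq m$, and then note that $F$ induces isomorphisms $\Hom_\cC(s^{-i}\cO,\cM)\cong\Hom_{\cC'}((s')^{-i}\cO',\cM')$ compatibly with $\phi$ and $\phi'$ (via the $\eta_{-i}$), so the same $m$ works for $(\cO',s')$. The converse direction is symmetric, using $F^{-1}$ (also an equivalence) and the inverse isomorphisms.

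The main obstacle is the bookkeeping in part (1): the isomorphisms $\eta_i$ are chosen non-canonically, and verifying that the resulting $\theta=\{\theta_i\}$ genuinely satisfies the twisting-system axiom — rather than merely an equivalence of algebras up to inner twist — requires care in tracking how $F$ interacts with the grading shift $s$ versus $s'$. The cleanest route is probably to first reduce to the case where $\cC'=\cC$, $F=\id$, and $s'$ differs from $s$ only up to natural isomorphism (since an equivalence of triples lets one replace $(\cC',\cO',s')$ by $(\cC,\cO,\widetilde s)$ for a suitable $\widetilde s$ naturally isomorphic to $s$ after conjugation), and then observe that $B(\cC,\cO,\widetilde s)_{\geq 0}$ is manifestly a twist of $B(\cC,\cO,s)_{\geq 0}$ by the twisting system built from the natural isomorphism $s\Rightarrow\widetilde s$. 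Part (2) is essentially formal once one accepts that equivalences are exact and preserve the relevant Hom-spaces, so I expect it to be short.
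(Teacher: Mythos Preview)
Your proposal is correct and, for part (2), essentially identical to the paper's argument: both transport (A1) and (A2) through the equivalence $F$ using the fixed isomorphisms $F(s^i\cO)\cong(s')^i\cO'$, with the same commutative diagram implicit in your compatibility claim.

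For part (1) the paper takes a shorter route than you do: it simply cites Zhang \cite[Theorem 3.3 and Theorem 3.1]{Zh} for the entire statement, since Zhang's characterization of twisted graded algebras already applies to the pair $B(\cC,\cO,s)_{\geq 0}$ and $B(\cC',\cO',s')_{\geq 0}$ once one knows the degree pieces are isomorphic via an equivalence of categories respecting the ``shift'' data. Your sketch---fixing the $\eta_i$, measuring the multiplicative defect, and packaging it as a twisting system---is exactly the content of Zhang's proof specialized to this situation, so the ``main obstacle'' you flag (verifying the cocycle identity for $\{\theta_i\}$) is precisely the work Zhang has already done. Your suggested reduction to $\cC'=\cC$, $F=\id$ is a clean way to organize that verification, but it is not needed if you are willing to invoke Zhang directly. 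Either way the two arguments arrive at the same place by the same mechanism; yours is just more explicit about what the citation encapsulates.
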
 

\begin{proof}
Assertion (1) follows from \cite[Theorem 3.3 and Theorem 3.1]{Zh}, and
assertion (2) follows from a straightforward verification.
%
%
\end{proof} 

There is another notion of ampleness introduced in \cite {Min}.  For a ring $R$, a two-sided tilting complex $L$ of $R$ is a complex of $R$-$R$ bimodules such that $-\lotimes _RL$ is an autoequivalence of $\cD(\Mod R)$.  

\begin{definition} \label{def.ttht} 
Let $R$ be a finite dimensional algebra and $L$ a two-sided tilting complex of $R$. 
\begin{enumerate}
\item{} We say that $L$ is quasi-ample if $h^q(L^{\lotimes _Ri})=0$ for all $q\neq 0$ and all $i\geq 0$.  
\item{} We say that $L$ is ample if $L$ is quasi-ample and $(\cD^{L, \geq 0}, \cD^{L, \leq 0})$ is a t-structure on $\cD^b(\mod R)$ where
\begin{align*}
\cD^{L, \geq 0} & :=\{M\in \cD^b(\mod R)\mid h^q(M\lotimes _RL^{\lotimes _Ri})=0 \; \textnormal { for all } q<0, i\gg 0\} \\
\cD^{L, \leq 0} & :=\{M\in \cD^b(\mod R)\mid h^q(M\lotimes _RL^{\lotimes _Ri})=0 \; \textnormal { for all } q>0, i\gg 0\}.
\end{align*}  
The heart of this t-structure is denoted by $\cH^L:=\cD^{L, \geq 0}\cap \cD^{L, \leq 0}$.  
\item{} If $\gldim R=n<\infty$, then the canonical module of $R$ is defined as the two-sided tilting complex $\omega _R:=DR[-n]$.  
\item{} We say that $R$ is (quasi-)Fano if $\omega _R^{-1}:=\RHom_R(\omega _R, R)$ is (quasi-)ample.
\end{enumerate}
\end{definition} 

\begin{remark} The notions of ample and Fano in the above definition were called extremely ample and extremely Fano in \cite {Min}, \cite {MM}, \cite {Mbc}.
\end{remark} 

\subsection{AS-regular Algebras over $R$}

Two generalizations of a notion of AS-regularity were introduced in \cite {MM}. 

\begin{definition}[{\cite[Definition 3.1]{MM}}] 
A locally finite $\NN$-graded algebra $A$ with $A_0 = R$ is called 
AS-regular over $R$ of dimension $d$ and of Gorenstein parameter $\ell$ if the following conditions are satisfied: 
\begin{enumerate}
\item{} $\gldim R<\infty$,
\item{} $\gldim A=d<\infty$, and
\item{} 
$\RuHom_A(R, A) \cong DR(\ell)[-d]$ in $\cD(\GrMod A)$ and in $\cD(\GrMod A^o)$. 
\end{enumerate}

For an AS-regular algebra $A$ over $R$ of Gorenstein parameter $\ell$, we define the Beilinson algebra of $A$ by 
$$\nabla A:=(A^{[\ell]})_0=\begin{pmatrix}
A_0 &A_1 & \cdots & A_{\ell-1} \\
0 & A_0 & \cdots & A_{\ell-2} \\
\vdots & \vdots & \ddots & \vdots \\
0 & 0 & \cdots & A_0\end{pmatrix}.$$
\end{definition}

By \cite[Corollary 3.7]{MM}, a usual AS-regular algebra defined in Definition \ref{def.AS} is exactly an AS-regular algebra over $k$ in the above definition. 
A typical example of an AS-regular algebra over $R$ is given as follows.  For a quasi-Fano algebra $R$ of global dimension $n$, the preprojective algebra of $R$ is defined as the tensor algebra $\Pi R:=T_R(\Ext_R^n(DR, R))$.  

\begin{theorem} 
[{\cite[Corollary 3.12]{Min}, \cite [Theorem 4.2, Theorem 4.12, Theoerm 4.14]{MM}}]
\label{thm.MM} 
If $R$ is a Fano algebra, then 
$\Pi R\cong B(\cD^b(\mod R), R, -\lotimes_R \omega _R^{-1})_{\geq 0}$ is a graded right coherent
AS-regular (Calabi-Yau) algebra of dimension $\gldim R+1$ and of Gorenstein parameter 1 such that $\cD^b(\tails \Pi R)\cong \cD^b(\mod R)$ as triangulated categories.

Conversely, if $A$ is a graded right coherent AS-regular algebra over $R$ of dimension $d\geq 1$, then $\nabla A$ is a Fano algebra of $\gldim \nabla A=d-1$ and $\grmod \Pi \nabla A\cong \grmod A$. 
\end{theorem}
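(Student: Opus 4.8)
The plan is to prove the two implications separately; the construction of $\Pi R$ from a Fano algebra $R$ carries the bulk of the work, and the converse is obtained by running the same machinery backwards. First I would write $n=\gldim R$ and $L:=\omega_R^{-1}=\RHom_R(DR,R)[n]$, so that $s:=-\lotimes_R L$ is an autoequivalence of $\cD^b(\mod R)$ and, by definition of $B$, one has $B(\cD^b(\mod R),R,s)_i=\Hom_{\cD^b(\mod R)}(R,L^{\lotimes_R i})=h^0(L^{\lotimes_R i})$. The first step is to identify $B(\cD^b(\mod R),R,s)_{\geq 0}$ with $\Pi R$: a Fano algebra is quasi-Fano, so $L$ is quasi-ample, i.e. $h^q(L^{\lotimes_R i})=0$ for $q\neq 0$ and $i\geq 0$; hence each $L^{\lotimes_R i}$ with $i\geq 0$ is an $R$-$R$ bimodule concentrated in cohomological degree $0$, and the vanishing $h^{<0}(L^{\lotimes_R(i-1)}\lotimes_R L)=0$ lets one replace derived tensor products by ordinary ones. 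By induction $L^{\lotimes_R i}\cong(h^0L)^{\otimes_R i}=\Ext^n_R(DR,R)^{\otimes_R i}$, and matching multiplications would yield a graded algebra isomorphism $B(\cD^b(\mod R),R,s)_{\geq 0}\cong T_R(\Ext^n_R(DR,R))=\Pi R$.

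I would then turn to the homological properties of $\Pi R$. From the standard $2$-term bimodule presentation of a tensor algebra,
\[
0\to \Pi R\otimes_R V\otimes_R\Pi R\to \Pi R\otimes_R\Pi R\to\Pi R\to 0,\qquad V:=\Ext^n_R(DR,R),
\]
together with the homological smoothness of a Fano algebra (so that $V$ and $R$ have finite projective dimension as $R$-$R$ bimodules), one gets $\gldim\Pi R=n+1$; feeding in the biduality $\RHom_R(\RHom_R(DR,R),R)\cong DR$ one computes $\RuHom_{(\Pi R)^e}(\Pi R,(\Pi R)^e)\cong\Pi R(1)[-(n+1)]$, so $\Pi R$ is graded Calabi-Yau of dimension $n+1$. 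Restricting to one side gives $\RuHom_{\Pi R}(R,\Pi R)\cong DR(1)[-(n+1)]$, and the symmetric statement over $(\Pi R)^o$, which is exactly AS-regularity over $R$ of dimension $n+1$ and Gorenstein parameter $1$. A separate and more delicate argument --- this is where the finiteness packaged into the Fano hypothesis is really used, see \cite{Min} --- would show that $\Pi R$ is graded right coherent (so $\Pi R/(\Pi R)_{\geq 1}\cong R$ is finitely presented and $\tails\Pi R$ is abelian) and that $\uH^0(\cM)_{\geq n}$ is graded right coherent for every $\cM\in\tails\Pi R$.

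For the derived equivalence I would note that, $\Pi R$ being AS-regular over $R$ of Gorenstein parameter $1$, the object $\pi(\Pi R)\in\tails\Pi R$ is a tilting object with $\End_{\tails\Pi R}(\pi(\Pi R))\cong(\Pi R)_0=R=\nabla\Pi R$, whence $\cD^b(\tails\Pi R)\cong\cD^b(\mod R)$; equivalently, since $L$ is ample the heart $\cH^L$ is a $\Hom$-finite abelian category on which $(R,\bar s)$ is an ample pair with $B(\cH^L,R,\bar s)_{\geq 0}\cong\Pi R$, so \thmref{thm.AZ}(2) gives $(\cH^L,R,\bar s)\cong(\tails\Pi R,\pi(\Pi R),(1))$ and then $\cD^b(\tails\Pi R)\cong\cD^b(\cH^L)\cong\cD^b(\mod R)$ (using $\gldim R<\infty$). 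For the converse, given a graded right coherent AS-regular $A$ over $R$ of dimension $d\geq 1$ and Gorenstein parameter $\ell$, I would pass to the $\ell$-th quasi-Veronese $A^{[\ell]}$: the equivalence $Q\colon\GrMod A\xrightarrow{\sim}\GrMod A^{[\ell]}$ restricts to $\grmod A\cong\grmod A^{[\ell]}$, $A^{[\ell]}$ is again graded right coherent, and one checks that $A^{[\ell]}$ is AS-regular over $\nabla A=(A^{[\ell]})_0$ of dimension $d$ and Gorenstein parameter $1$, generated in degree one. For such an algebra the computations above reverse: they force $\gldim\nabla A=d-1$, identify $A^{[\ell]}$ with $\Pi\nabla A=T_{\nabla A}(\Ext^{d-1}_{\nabla A}(D\nabla A,\nabla A))$, and --- reading off $\RHom_{\nabla A}(\RHom_{\nabla A}(D\nabla A,\nabla A),\nabla A)\cong D\nabla A$ together with ampleness of the shift on $\tails A^{[\ell]}$ --- force $\nabla A$ to be Fano; hence $\grmod\Pi\nabla A\cong\grmod A^{[\ell]}\cong\grmod A$.

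The step I expect to be the main obstacle is not Step~1 or the Calabi-Yau computation --- both are bimodule homological algebra over a tensor algebra --- but the bridge between Minamoto's \emph{triangulated} notion of ampleness (the $t$-structure with heart $\cH^L$ on $\cD^b(\mod R)$) and the Artin-Zhang \emph{abelian} notion used in \thmref{thm.AZ}. Concretely one must prove that $\Pi R$ is graded right coherent with every $\uH^0(\cM)_{\geq n}$ coherent, that $\cH^L$ is $\Hom$-finite, and that the shift of $\tails\Pi R$ matches the autoequivalence of $\cH^L$ induced by $-\lotimes_R L$; it is here that ampleness of $L$, rather than mere quasi-ampleness, is indispensable.
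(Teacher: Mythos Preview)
The paper does not prove \thmref{thm.MM}: it is stated with explicit attribution to \cite[Corollary 3.12]{Min} and \cite[Theorems 4.2, 4.12, 4.14]{MM}, and no proof (or even sketch) is supplied in the present paper. So there is nothing to compare your proposal against here; what you have written is a reconstruction of the arguments from those references rather than an alternative to anything in this paper.

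That said, your outline is broadly faithful to how the cited papers proceed, including your correct identification of the delicate point (that the Fano condition, not mere quasi-Fano, is what buys graded right coherence of $\Pi R$ and the compatibility between Minamoto's $t$-structure ampleness and Artin--Zhang ampleness). One small inaccuracy in your converse: you write that one ``identif[ies] $A^{[\ell]}$ with $\Pi\nabla A$'', but in fact \cite[Theorem 4.12]{MM} only gives that $\Pi\nabla A$ is a \emph{Zhang twist} of $A^{[\ell]}$ by a graded algebra automorphism, not an isomorphism of graded algebras. This is enough for the stated conclusion $\grmod\Pi\nabla A\cong\grmod A^{[\ell]}\cong\grmod A$ (via \cite{Zh}), and indeed the present paper uses exactly this twist statement later in the proof of \thmref{thm.main}; but you should not expect $A^{[\ell]}\cong\Pi\nabla A$ on the nose.
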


\begin{definition}[{\cite[Definition 3.9]{MM}}] 
A locally finite $\NN$-graded algebra $A$ with $A_0 = R$ is called 
ASF-regular of dimension $d$ and of Gorenstein parameter $\ell$ if the following conditions are satisfied: 
\begin{enumerate}
\item{} $\gldim R<\infty$,
\item{} $\gldim A=d<\infty$, and
\item{} $\RuGa(A) \cong DA(\ell)[-d]$ in $\cD(\GrMod A)$ and in $\cD(\GrMod A^o)$.
\end{enumerate}
\end{definition}

\begin{remark}
In the definition of an ASF-regular algebra given in \cite[Definition 3.9]{MM},
the condition $\gldim R<\infty$ was not imposed.
In this paper, we impose this condition to show that AS-regularity over $R$ and ASF-regularity are equivalent. 
\end{remark} 

In \cite {MM}, Minamoto and the first author showed the following.

\begin{theorem}[{\cite[Theorem 3.12.]{MM}}] \label{thm.ASF}
If $A$ is an AS-regular algebra over $R$ of dimension $d$ and of Gorenstein parameter
$\ell$, then $A$ is an ASF-regular algebra of dimension $d$ and of Gorenstein parameter $\ell$.
\end{theorem}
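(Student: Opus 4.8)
The plan is to deduce condition~(3) of ASF-regularity from condition~(3) of AS-regularity over $R$, since conditions~(1) and~(2) of the two notions coincide. So it is enough to show: if $\RuHom_A(R,A)\cong DR(\ell)[-d]$ in $\cD(\GrMod A)$ and in $\cD(\GrMod A^o)$, then $\RuGa(A)\cong DA(\ell)[-d]$ in $\cD(\GrMod A)$ and in $\cD(\GrMod A^o)$. Everything rests on the standard identity $\uH^i_\fm(A)\cong \varinjlim_{n}\Ext^i_A(A/A_{\geq n},A)$, obtained by applying $\uGa_\fm=\varinjlim_n\uHom_A(A/A_{\geq n},-)$ to an injective resolution of $A$ and using that filtered colimits are exact.

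The heart of the matter is the computation of $\RuHom_A(A/A_{\geq n},A)$. Each $A/A_{\geq n}$ has a finite filtration whose successive quotients are the $A_j(-j)$ for $0\le j\le n-1$, and each $A_j(-j)$ is a finitely generated graded right $R$-module via the surjection $A\twoheadrightarrow A/A_{\geq 1}=R$ (here one uses that $R$, hence each $A_j$, is finite dimensional). For $V\in\grmod R$, viewed as a graded right $A$-module, change of rings along $A\twoheadrightarrow R$ gives a natural isomorphism $\RuHom_A(V,A)\cong\RuHom_R(V,\RuHom_A(R,A))\cong\RuHom_R(V,DR)(\ell)[-d]\cong DV(\ell)[-d]$, the last step because $\RuHom_R(-,DR)\cong D(-)$ is exact; in particular $\RuHom_A(V,A)$ is concentrated in cohomological degree $d$. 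This isomorphism is compatible with the triangles coming from short exact sequences (it is an adjunction isomorphism of derived functors composed with isomorphisms of exact functors), so the two contravariant triangulated functors $\RuHom_A(-,A)$ and $D(-)(\ell)[-d]$ are identified on the triangulated subcategory of $\cD^b(\grmod A)$ generated by $\grmod R$, which contains each $A/A_{\geq n}$ (exhibited as an iterated cone via its filtration). Hence $\RuHom_A(A/A_{\geq n},A)\cong D(A/A_{\geq n})(\ell)[-d]$, naturally in $n$, so in particular compatibly with the surjections $A/A_{\geq n+1}\twoheadrightarrow A/A_{\geq n}$; and $\Ext^i_A(A/A_{\geq n},A)=0$ for $i\neq d$.

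Taking the colimit over $n$ then yields $\uH^i_\fm(A)=\varinjlim_n\Ext^i_A(A/A_{\geq n},A)=0$ for $i\neq d$, and $\uH^d_\fm(A)\cong\varinjlim_n D(A/A_{\geq n})(\ell)$. Since $A\cong\varprojlim_n A/A_{\geq n}$ degreewise and graded $k$-duality carries this inverse limit of finite dimensional spaces to the direct limit of the $D(A/A_{\geq n})$, we get $\uH^d_\fm(A)\cong DA(\ell)$, with matching right $A$-module structures. Thus $\RuGa(A)\cong DA(\ell)[-d]$ in $\cD(\GrMod A)$. Running the same argument for the algebra $A^o$ (whose degree-zero part is $R^o$, with $\gldim R^o=\gldim R<\infty$) and invoking the $\cD(\GrMod A^o)$ half of the hypothesis gives the corresponding statement in $\cD(\GrMod A^o)$. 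Together with the unchanged conditions~(1) and~(2), this is exactly ASF-regularity of dimension $d$ and of Gorenstein parameter $\ell$.

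The step I expect to be the main obstacle is the d\'evissage of the second paragraph: one must check carefully that the change-of-rings isomorphism is genuinely a natural transformation of contravariant triangulated functors, so that it propagates from $\grmod R$ to the modules $A/A_{\geq n}$, and that the resulting identifications — together with all the $A$-module structures on the Ext groups and on $DA(\ell)$ — are natural in $n$ and hence survive the colimit. Once that compatibility and the identity $\uH^i_\fm(A)\cong\varinjlim_n\Ext^i_A(A/A_{\geq n},A)$ are in hand, the remaining steps are routine.
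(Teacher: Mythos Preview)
The paper does not prove this statement; it is quoted from \cite[Theorem~3.12]{MM}, so there is no in-paper argument to compare against. I comment only on the soundness of your sketch.

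Your strategy is the natural one, and the vanishing $\uH^i_\fm(A)=0$ for $i\neq d$ follows exactly as you say: $\uExt^i_A(R,A)=0$ for $i\neq d$, and the long exact sequences from the filtration of $A/A_{\geq n}$ propagate this. The identification $\uH^d_\fm(A)\cong DA(\ell)$ as graded $k$-vector spaces also follows.

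The obstacle you flag is real, and your justification does not dispose of it. The adjunction
\[
\RuHom_A(V,A)\;\cong\;\RuHom_R\bigl(V,\RuHom_A(R,A)\bigr)
\]
is a natural isomorphism of functors on $\cD(\GrMod R)$, \emph{not} on the thick subcategory of $\cD^b(\grmod A)$ generated by $\grmod R$. These categories differ: for $n\geq 2$ and $A_1\neq 0$ the module $A/A_{\geq n}$ carries a nontrivial right $A_{\geq 1}$-action and is not in the essential image of $\cD(\GrMod R)\to\cD^b(\grmod A)$. So ``an adjunction isomorphism composed with isomorphisms of exact functors'' does not produce a natural transformation between $\RuHom_A(-,A)$ and $D(-)(\ell)[-d]$ on that thick subcategory. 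Concretely, from $0\to A_n(-n)\to A/A_{\geq n+1}\to A/A_{\geq n}\to 0$ you obtain two short exact sequences of graded right $A$-modules (one via $\uExt^d_A(-,A)$, one via $D(-)(\ell)$) whose outer terms you have identified; but matching outer terms does not match the extensions, so neither the inductive step nor the compatibility ``naturally in $n$'' is established. What is really needed is a bimodule statement: since $\uExt^d_A(R,A)$ sits in the single internal degree $\ell$, both $A$-actions on it factor through $R$, and one wants $\uExt^d_A(R,A)\cong DR(\ell)$ as $R$-$R$-bimodules; the two one-sided hypotheses alone do not force this (think of $R=k\times k$ and the swap bimodule). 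Until that, or an alternative argument identifying the $A$-module structure of $\uH^d_\fm(A)$ directly, is supplied, the proof is incomplete.
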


It was proved that the converse of Theorem \ref{thm.ASF} is also true when $A$ is noetherian (see \cite[Theorem 2.10]{Uct}).
For the purpose of this paper, we here show that a non-noetherian version of the converse of Theorem \ref{thm.ASF}.

\begin{definition}
For a locally finite $\NN$-graded algebra $A$, we say that the condition (EF) holds if
every finite dimensional graded right $A$-module 
is graded right coherent.
\end{definition}

If $A$ is graded right coherent (in particular, right noetherian), then $A$ satisfies (EF) by Proposition \ref{prop.coh}. 
If $A$ is connected graded, then (EF) is equivalent to Ext-finiteness (that is, $\uExt_A^i(k, k)$ is finite dimensional for every $i$).

\begin{lemma}\label{lem.comm}
Let $A$ be a locally finite $\NN$-graded algebra satisfying (EF). Then $\RuGa(-)$ commutes with direct limits.
\end{lemma}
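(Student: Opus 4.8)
The statement to prove is that for a locally finite $\NN$-graded algebra $A$ satisfying (EF), the functor $\RuGa(-)$ commutes with direct limits. The plan is to reduce from $\RuGa$ to the building-block functors $\uHom_A(A/A_{\geq n}, -)$ and then to $\uHom_A(A/A_{\geq n}, -)$ in each fixed degree, where the commutation with direct limits is classical for finitely presented modules. First I would recall that, since $\uGa_\fm(M) = \varinjlim_n \uHom_A(A/A_{\geq n}, M)$, a filtered colimit of left exact functors, and since filtered colimits are exact in $\GrMod A$, one can compute $\RuGa$ by taking a single injective (or $\uGa_\fm$-acyclic) resolution; the key point is that the cohomology groups $\uH^i_\fm(M)$ can be computed as $\varinjlim_n \uExt^i_A(A/A_{\geq n}, M)$. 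This identification is the standard local-cohomology-as-direct-limit-of-Ext formula, valid here because $\{A/A_{\geq n}\}_n$ is a directed system with surjective transition maps.

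The main step is then: for each fixed $n$ and each $i$, the functor $\uExt^i_A(A/A_{\geq n}, -)$ commutes with direct limits. This is where (EF) enters. The module $A/A_{\geq n} = \bigoplus_{j=0}^{n-1} A_j$ is finite dimensional, hence by (EF) it is graded right coherent, so it admits a resolution by finitely generated graded free $A$-modules. For a finitely generated free module $A(\ell)^{\oplus m}$ one has $\uHom_A(A(\ell)^{\oplus m}, \varinjlim M_\lambda) \cong \varinjlim \uHom_A(A(\ell)^{\oplus m}, M_\lambda)$ degree by degree — indeed in each degree this is just the statement that $\Hom_A(A, -) \cong (-)$ in the appropriate shifted degree, applied finitely many times, and direct limits of $k$-vector spaces are exact and commute with finite direct sums. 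Taking cohomology of the resulting complex and using again that filtered colimits are exact (so commute with taking cohomology of a complex), we get $\uExt^i_A(A/A_{\geq n}, \varinjlim M_\lambda) \cong \varinjlim \uExt^i_A(A/A_{\geq n}, M_\lambda)$ for all $i$.

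Finally I would assemble the pieces:
\[
\uH^i_\fm\Bigl(\varinjlim_\lambda M_\lambda\Bigr) \cong \varinjlim_n \uExt^i_A\Bigl(A/A_{\geq n}, \varinjlim_\lambda M_\lambda\Bigr) \cong \varinjlim_n \varinjlim_\lambda \uExt^i_A(A/A_{\geq n}, M_\lambda) \cong \varinjlim_\lambda \varinjlim_n \uExt^i_A(A/A_{\geq n}, M_\lambda) \cong \varinjlim_\lambda \uH^i_\fm(M_\lambda),
\]
where the interchange of the two filtered colimits is harmless. Since this holds in every cohomological degree and $\RuGa$ is bounded below (it is the right derived functor of a left exact functor), this yields that $\RuGa$ commutes with direct limits at the level of the derived category (or, more precisely, one checks it on a quasi-isomorphism-invariant model such as $\varinjlim$ of the natural complex $\varinjlim_n \uHom_A(A/A_{\geq n}, I^\bullet)$). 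I expect the main obstacle to be purely bookkeeping: justifying carefully that $\uH^i_\fm(M) = \varinjlim_n \uExt^i_A(A/A_{\geq n}, M)$ — i.e. that one may interchange the direct limit over $n$ with the cohomology of an injective resolution — which again is an instance of exactness of filtered colimits but deserves to be spelled out; everything downstream of that identity is formal.
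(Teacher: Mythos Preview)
Your outline is precisely the argument the paper intends---it refers to Van den Bergh's proof with (EF) in place of Ext-finiteness---but the step ``by (EF) it is graded right coherent, so it admits a resolution by finitely generated graded free $A$-modules'' is not justified. For a finite-dimensional module $M$, every submodule is again finite-dimensional and hence finitely generated, so ``$M$ is coherent'' collapses to ``$M$ is finitely presented''; thus (EF) only says that each finite-dimensional module is finitely presented, not that it admits a full resolution by finitely generated projectives. Van den Bergh's hypothesis of Ext-finiteness is exactly the statement that $k$ (and hence every finite-dimensional module) has such a resolution, and that is what makes each $\uExt^i_A(A/A_{\geq n},-)$ commute with filtered colimits for all $i$. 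Under (EF) alone this can fail: for instance $A=k\langle x,y\rangle/(xy^nx:n\geq 1)$ is locally finite and connected with $A_{\geq 1}=xA+yA$ finitely generated (so (EF) holds), yet the minimal relations $xy^nx$ are infinite in number, so $\uTor^A_2(k,k)$ is infinite-dimensional and $k$ has no resolution by finitely generated frees past the first step.

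The paper's own proof is only the one-line reference, and its parenthetical remark that (EF) is equivalent to Ext-finiteness in the connected case runs into the same issue. In the paper's applications $A$ is in fact graded right coherent, and under that stronger hypothesis every finitely presented module does admit a resolution by finitely generated frees, so your argument goes through verbatim there. But as a proof of the lemma under only the hypothesis (EF), the passage from ``coherent'' to ``resolution by finitely generated frees'' is the missing step.
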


\begin{proof}
The proof is similar to that of \cite[Lemma 4.3]{V} by using (EF) instead of Ext-finiteness.
\end{proof}

Let $A, C$ be graded algebras.  Note that if $M$ is a complex of graded $C$-$A$ bimodules, then $DM$ defined by $(DM)^i=D(M^{-i})$ is a complex of graded $A$-$C$ bimodules.  

\begin{theorem}[Local Duality] \label{thm.ld}
Let $A$ be a locally finite $\NN$-graded algebra, and $C$ another $\NN$-graded algebra.
Assume that $A$ has finite cohomological dimension, and it satisfies (EF). 
Then for any $M \in \cD^b(\GrMod (C^o\otimes_k A))$,
$$D\RuGa(M) \cong \RuHom_A(M, D\RuGa(A))$$
in $\cD(\GrMod (A^o \otimes_k C))$.
\end{theorem}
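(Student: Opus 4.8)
The plan is to establish the local duality formula by the standard route: reduce to the case $M = A$ via a derived-category ``way-out'' argument, and for that base case combine the definition of $\RuGa$ as a colimit of $\RuHom_A(A/A_{\geq n}, -)$ with adjunction. First I would observe that both sides, as functors of $M \in \cD^b(\GrMod(C^o\otimes_k A))$, are exact (triangulated) functors; $D\RuGa(-)$ is exact because $\RuGa$ is and $D = \Hom_k(-,k)$ is exact, and $\RuHom_A(-, D\RuGa(A))$ is exact as a contravariant triangulated functor. Moreover both are bounded: the hypothesis that $A$ has finite cohomological dimension bounds $\RuGa$ uniformly, so $D\RuGa(M)$ is bounded; and $\RuHom_A(M, D\RuGa(A))$ is bounded because $A$ has finite global dimension is \emph{not} assumed here, so one must instead note that $D\RuGa(A)$ itself, being $D$ of a bounded complex with locally finite cohomology, sits in $\cD^b$, and on $\cD^b(\GrMod(C^o\otimes_k A))$ the functor $\RuHom_A(-, D\RuGa(A))$ has bounded-above and bounded-below amplitude on the subcategory of modules projective over $A$—here one uses that $M$ bounded with finitely presented/coherent cohomology admits a bounded-above complex of f.g. graded projectives, truncatable by the cohomological dimension bound on the target. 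Granting boundedness of both functors, a natural transformation between them which is an isomorphism on $A$ (hence on all finitely generated graded free modules and their shifts, by additivity and compatibility with $\bigoplus$ and $(n)$) is an isomorphism on all of $\cD^b$, by the usual devissage on the amplitude of the complex together with Proposition~\ref{prop.coh}/(EF) guaranteeing that objects of $\grmod A$ have resolutions by finitely generated graded frees.

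Next, the natural transformation itself. For $M$ a bounded-above complex of graded projective right $A$-modules (with a $C^o$-action), there is an evaluation/trace map
$$
\RuHom_A(M, A) \lotimes_A D\RuGa(A) \longrightarrow D\RuGa\bigl(\RuHom_A(M,A) \lotimes_A A\bigr) \longrightarrow D\RuGa(M),
$$
or more directly one uses the tensor-hom adjunction together with the identification $\RuGa(N) \cong N \lotimes_A \RuGa(A)$ for $N$ in a suitable subcategory (this last being exactly where Lemma~\ref{lem.comm}, the commutation of $\RuGa$ with direct limits, enters: it lets one reduce $\RuGa$ of a colimit of the free modules appearing in a resolution to the colimit of $\RuGa$ of frees, i.e. to $\RuGa(A)$ tensored up). Dualizing, $D\RuGa(N) \cong \RuHom_k(N \lotimes_A \RuGa(A), k) \cong \RuHom_A(N, \RuHom_k(\RuGa(A),k)) = \RuHom_A(N, D\RuGa(A))$, where the middle step is adjunction between $- \lotimes_A \RuGa(A)$ and $\RuHom_A(-, \RuHom_k(\RuGa(A),k))$. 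That chain of isomorphisms is the desired map; I would spell it out at the level of a projective resolution $P \to M$ and then check it is independent of the resolution and natural in $M$, which is routine.

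The base case $M = A$ is then almost tautological: $D\RuGa(A) = \RuHom_A(A, D\RuGa(A))$ canonically, and one checks the constructed map reduces to this identity. The verification that the map respects the $(A^o\otimes_k C)$-bimodule structure (the $C$-side coming along for the ride, the $A^o$-side produced by dualizing the right $A$-action) is bookkeeping that I would handle by working throughout with complexes of $(C^o\otimes_k A)$-bimodules and noting all maps above are constructed functorially.

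The main obstacle I anticipate is not the formal skeleton but the finiteness/boundedness management in the non-noetherian setting: without noetherianness one cannot freely resolve by \emph{finite} projectives in $\grmod A$, so the reduction-to-$A$ step must be run using (EF) and graded right coherence carefully—one needs that a bounded complex with coherent cohomology has a resolution by a bounded-above complex of finitely generated graded projectives whose naive truncation past degree $\cd(\uGa_\fm) + (\text{amplitude of } M)$ still computes $D\RuGa(M)$ correctly, and that $\RuGa$ really does commute with the relevant filtered colimits (Lemma~\ref{lem.comm}). Getting these hypotheses to interlock—and making sure $D\RuGa(A)$ has the finiteness needed for $\RuHom_A(-, D\RuGa(A))$ to behave on $\cD^b$—is the delicate part; the rest follows the proof of the connected graded local duality theorem (as in \cite{Ye}, \cite{V}) essentially verbatim, with (EF) substituted for Ext-finiteness wherever the latter was invoked.
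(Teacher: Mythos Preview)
Your core argument---the chain $D\RuGa(M) \cong D(M \lotimes_A \RuGa(A)) \cong \RuHom_A(M, D\RuGa(A))$ via tensor--hom adjunction, with Lemma~\ref{lem.comm} supplying the first isomorphism---is exactly what the paper does: the paper's proof is the single sentence that one follows \cite[Theorem~5.1]{V} verbatim, using Lemma~\ref{lem.comm} (which relies on (EF)) in place of the Ext-finiteness used there.

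However, your surrounding d\'evissage discussion is misdirected and reflects a misreading of the hypotheses. The statement is for $M \in \cD^b(\GrMod(C^o\otimes_k A))$, not $\cD^b(\grmod A)$: $M$ need not have coherent cohomology, and you cannot resolve by \emph{finitely generated} graded projectives. Proposition~\ref{prop.coh} and graded coherence play no role here, and (EF) is not used to produce resolutions---it enters \emph{only} through Lemma~\ref{lem.comm}. The correct reduction is to arbitrary graded free modules (possibly infinite direct sums of shifts of $A$): Lemma~\ref{lem.comm} says $\RuGa$ commutes with direct limits, hence with arbitrary direct sums, so $\RuGa(F) \cong F \otimes_A \RuGa(A)$ for any graded free $F$; taking a (not necessarily finitely generated) free resolution of an arbitrary module then gives $\RuGa(M) \cong M \lotimes_A \RuGa(A)$ on all of $\cD^b(\GrMod(C^o\otimes_k A))$, with the finite cohomological dimension hypothesis controlling boundedness. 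Your worries about bounding $\RuHom_A(-, D\RuGa(A))$ independently, and about truncating finitely generated projective resolutions, are therefore unnecessary: once the tensor formula is in hand, the adjunction step is formal and the boundedness is automatic. Drop the coherence paragraph and your sketch matches \cite{V} precisely.
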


\begin{proof} 
Using Lemma \ref{lem.comm}, the proof works along the same lines as that of \cite[Theorem 5.1]{V}.
\end{proof}

If $A$ is an ASF-regular algebra, then
there exists a graded algebra automorphism $\nu$ of $A$ such that $D\RuGa(A) \cong A_{\nu}(-\ell)[d]$ in 
$\cD(\GrMod A^e)$, so, similar to the connected graded case, we call the graded algebra automorphism $\nu$ the (generalized) Nakayama automorphism of $A$, and 
we call the graded $A$-$A$ bimodule $\omega_A := A_\nu (-\ell)$ the canonical module over $A$ (see \cite [Section 3.2]{MM}).

\begin{theorem} \label{thm.com}
If $A$ is an ASF-regular algebra of dimension $d$ and of Gorenstein parameter $\ell$ satisfying (EF),
then $A$ is an AS-regular algebra over $R=A_0$ of dimension $d$ and of Gorenstein parameter $\ell$.
\end{theorem}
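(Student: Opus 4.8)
The plan is to reverse-engineer the three defining conditions of AS-regularity over $R$ from the ASF-regularity hypothesis, using Local Duality (Theorem \ref{thm.ld}) as the principal bridge between the $\RuGa$-formulation and the $\RuHom$-formulation. Condition (1), $\gldim R<\infty$, is built into the definition of ASF-regular, so there is nothing to do there. Condition (2), $\gldim A=d<\infty$, is also immediate from the definition. The entire content therefore lies in condition (3): I must show $\RuHom_A(R,A)\cong DR(\ell)[-d]$ in $\cD(\GrMod A)$ and in $\cD(\GrMod A^o)$, starting from $\RuGa(A)\cong DA(\ell)[-d]$ in $\cD(\GrMod A^e)$.

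First I would record that, since $\gldim A=d<\infty$, $A$ has finite cohomological dimension, and the hypothesis (EF) is in force, so Theorem \ref{thm.ld} applies with $M$ ranging over $\cD^b(\GrMod(C^o\otimes_k A))$ for suitable auxiliary $C$; in particular it applies to $M=R=A/A_{\geq 1}$ viewed as a graded $A$-$A$ bimodule (so $C=A$), and to $M=A$ itself. Applying Local Duality to $M=A$ recovers the Nakayama-automorphism description $D\RuGa(A)\cong A_\nu(-\ell)[d]$ as in the discussion preceding the statement, so $\RuGa(A)\cong DA(-\ell)[d]$ after one more duality — consistent with condition (3) of ASF-regularity up to identifying $D(A_\nu(-\ell))$ with $DA(\ell)$ as graded right (resp. left) $A$-modules. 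Next, apply Theorem \ref{thm.ld} to $M=R$: this gives
$$D\RuGa(R)\cong \RuHom_A(R, D\RuGa(A))\cong \RuHom_A(R, A_\nu(-\ell)[d])$$
in $\cD(\GrMod A^o)$ (and symmetrically in $\cD(\GrMod A^e)$ if I feed in the bimodule structure). Now $R$ is finite dimensional, hence $\fm$-torsion, so $\RuGa(R)\cong R$ and the left-hand side is just $DR$. Untwisting by $\nu$ (an automorphism, so harmless on $\GrMod$) and shifting, this rearranges to $\RuHom_A(R,A)\cong DR(\ell)[-d]$ in $\cD(\GrMod A)$. The same argument with the roles of left and right modules exchanged — i.e. running Local Duality over $A^o$, which is legitimate because ASF-regularity is a two-sided condition and (EF) together with finite cohomological dimension passes to $A^o$ — yields the statement in $\cD(\GrMod A^o)$. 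Finally I would note that by \cite[Corollary 3.7]{MM} (cited in the excerpt) one checks the numerical/definitional matching so that this $A$ is precisely an AS-regular algebra over $R=A_0$ of dimension $d$ and Gorenstein parameter $\ell$.

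The main obstacle I anticipate is the bookkeeping around the bimodule structures and the twist $\nu$: Theorem \ref{thm.ld} as stated produces an isomorphism in $\cD(\GrMod(A^o\otimes_k C))$, and to get a clean one-sided statement in $\cD(\GrMod A)$ and separately in $\cD(\GrMod A^o)$ I need to track which module structure survives each application and verify that forgetting a side is compatible with the twist. A secondary subtlety is confirming that the hypotheses of Theorem \ref{thm.ld} are genuinely available on the $A^o$ side — this should follow since (EF) for $A$ together with $\gldim A<\infty$ forces the analogous finiteness for $A^o$ via the finite global dimension of $R=A_0$, but it deserves an explicit remark. Everything else is a formal manipulation of derived functors of the kind already carried out in the proof of Theorem \ref{thm.AZ}.
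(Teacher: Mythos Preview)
Your proposal is correct and follows essentially the same route as the paper: apply Local Duality (Theorem \ref{thm.ld}) to $M=R$, use $\RuGa(R)\cong R$ since $R$ is finite dimensional and hence $\fm$-torsion, substitute $D\RuGa(A)\cong A_\nu(-\ell)[d]$, and untwist. The only simplification in the paper's version is that it runs the entire chain in $\cD(\GrMod A^e)$ (taking $C=A$ in Theorem \ref{thm.ld}) and then forgets one side at the end, which dissolves your anticipated obstacle about a separate argument over $A^o$; the reference to \cite[Corollary 3.7]{MM} at the end of your sketch is not needed.
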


\begin{proof}
Since $A$ is ASF-regular, we have $D\RuGa(A) \cong A_{\nu}(-\ell)[d]$ in 
$\cD(\GrMod A^e)$.
It follows from Theorem \ref{thm.ld} that
\begin{align*}
\RuHom_A(R,A) &\cong \RuHom_A(R,A_{\nu}(-\ell)[d])_{\nu^{-1}}(\ell)[-d]\\
&\cong \RuHom_A(R,D\RuGa(A))_{\nu^{-1}}(\ell)[-d]\\
&\cong D\RuGa(R)_{\nu^{-1}}(\ell)[-d] \\
&\cong DR_{\nu^{-1}}(\ell)[-d]
\end{align*}
in $\cD(\GrMod A^e)$, so $\RuHom_A(R,A) \cong DR(\ell)[-d]$ in $\cD(\GrMod A)$ and in $\cD(\GrMod A^o)$.
Hence the result follows.
\end{proof}

\begin{remark} Let $A$ be a graded right coherent algebra.  Since $A$ satisfies (EF) by Proposition \ref{prop.coh}, $A$ is an ASF-regular algebra of dimension $d$ and of Gorenstein parameter $\ell$ if and only if $A$ is an AS-regular algebra over $R=A_0$ of dimension $d$ and of Gorenstein parameter $\ell$.  Note that it is conjectured that every AS-regular algebra is graded right coherent. 
\end{remark} 

\section{Regular Tilting Objects and Relative Helices} 

\subsection{Canonical Bimodules}

The canonical sheaf plays an essential role to study a projective scheme in commutative algebraic geometry.  We will define a notion of canonical bimodule for an abelian category.  

\begin{definition}[{\cite[Definition 3.1]{BK}}]
Let $\cC$ be a $\Hom$-finite $k$-linear category.
A Serre functor for $\cC$ is a $k$-linear autoequivalence $S\in \Aut _k\cC$ such that there exists a bifunctorial isomorphism
$$F_{X, Y}:\Hom_{\cC}(X, Y)\to D\Hom_{\cC}(Y, S(X))$$ for $X, Y\in \cC$.
\end{definition}  

\begin{remark} \label{rem.Se}
We explain the functoriality of a Serre functor $S$ in $X$ in the above definition.  Define functors $G=\Hom_{\cC}(-, Y)$ and $H=D\Hom_{\cC}(S^{-1}(Y), -)=\Hom_k(\Hom_{\cC}(S^{-1}(Y), -), k)$.  Fix $\b\in \Hom_{\cC}(X, X')$.  Then 
$$G(\b):\Hom_{\cC}(X', Y)\to \Hom_{\cC}(X, Y)$$
is given by $(G(\b))(\a)=\a\circ \b$.  On the other hand,
$$H(\b):\Hom_k(\Hom_{\cC}(S^{-1}(Y), X'), k)\to \Hom_k(\Hom_{\cC}(S^{-1}(Y), X), k)$$
is given by $((H(\b))(\phi))(\c)=\phi(\b\circ \c)$ for $\c\in \Hom_{\cC}(S^{-1}(Y), X)$.
By functoriality, we have a commutative diagram
$$\begin{CD} 
\Hom_{\cC}(X', Y) @>{G(\b)}>> \Hom_{\cC}(X, Y) \\
@VF_{X', Y}VV @VVF_{X, Y}V \\
D\Hom_{\cC}(S^{-1}(Y), X') @>{H(\b)}>> D\Hom_{\cC}(S^{-1}(Y), X),
\end{CD}$$
so, for $\a\in \Hom_{\cC}(X', Y)$ and $\c\in \Hom_{\cC}(S^{-1}(Y), X)$, we have
$$F_{X, Y}(\a\circ \b)(\c)=(F_{X, Y}(G(\b)(\a)))(\c)=(H(\b)(F_{X', Y}(\a)))(\c)=F_{X', Y}(\a)(\beta\circ \c).$$
\end{remark} 

\begin{definition} Let $\cC$ be an abelian category.  A bimodule $\cM$ over $\cC$ is an adjoint pair of functors from $\cC$ to itself with the suggestive notation $\cM=(-\otimes_{\cC}\cM, \cHom_{\cC}(\cM, -))$.  A bimodule $\cM$ over $\cC$ is invertible if $-\otimes _{\cC}\cM$ is an autoequivalence of $\cC$.  In this case, the inverse bimodule of $\cM$ is defined by $\cM^{-1}=(-\otimes _{\cC}\cM^{-1}, \cHom_{\cC}(\cM^{-1}, -)):=(\cHom_{\cC}(\cM, -), -\otimes _{\cC}\cM)$. 
\end{definition} 

\begin{definition} \label{dfn.cf}
Let $\cC$ be a $k$-linear abelian category.
A canonical bimodule for $\cC$ is 
an invertible bimodule $\omega _{\cC}$ over $\cC$ such that, for some $n\in \ZZ$,
the autoequivalence $-\lotimes _{\cC}\omega _{\cC}[n]$ of $\cD^b(\cC)$ induced by $-\otimes _{\cC}\omega _{\cC}$ is a Serre functor for $\cD^b(\cC)$. 
\end{definition} 

\begin{remark} \label{rem.cms} Let $\cC$ be a $k$-linear abelian category.  
\begin{enumerate}
\item{} Since the Serre functor for $\cD^b(\cC)$ is unique, a canonical bimodule for $\cC$ is unique if it exists.  
\item{} If $\cC$ has a canonical bimodule, then $\cD^b(\cC)$ has a Serre functor by definition, so $\cD^b(\cC)$ is automatically $\Hom$-finite. 
\item{} If $\cC$ has a canonical bimodule $\omega _{\cC}$, and $-\lotimes _{\cC}\omega _{\cC}[n]$ is the Serre functor for $\cD^b(\cC)$, then it is easy to see that $\gldim \cC=n<\infty$. 
\end{enumerate}
\end{remark} 

\begin{example} \label{ex.cf} 
\begin{enumerate}
\item{} If $X$ is a smooth projective scheme, then the canonical sheaf $\omega _X$ over $X$ is the canonical bimodule for $\coh X$. 
\item{} Let $A$ be a noetherian AS-Gorenstein algebra over $k$, and $\omega _A$ the canonical module of $A$.
Then $A$ is a graded isolated singularity 
if and only if $\omega _{\cA}:=\pi \omega _A$ is the canonical bimodule for $\tails A$ (\cite [Theorem 1.3]{U}). 
\item{} If $A$ is a graded right coherent AS-regular algebra over $R$, then $\omega _{\cA}:=\pi \omega_A$ is the canonical bimodule for $\tails A$ where $\omega _A$ is the canonical module over $A$ (\cite [Theorem 4.12]{MM}). 
\item{} If $R$ is a finite dimensional algebra of $\gldim R=n<\infty$, then $-\lotimes _R\omega _R[n]$ is the Serre functor for $\cD^b(\mod R)$, but $\omega _R$ is not a canonical bimodule for $\mod R$ in our sense because $-\otimes _R\omega _R$ is not an autoequivalence of $\mod R$.
However, if $R$ is Fano, then $-\otimes _R\omega _R$ is an autoequivalence of $\cH^{\omega _R^{-1}}$ (see Definition \ref{def.ttht} (2)) and $\cD^b(\cH^{\omega _R^{-1}})=\cD^b(\mod R)$ (see \cite [Corollary 3.6, Corollary 3.12]{Min}), so $\omega _R$ is a canonical bimodule for $\cH^{\omega _R^{-1}}$.  
\end{enumerate}
\end{example} 

\subsection{Regular Tilting Objects}

Let $\cT$ be a triangulated category.
For a set of objects $\{E_0, \dots, E_{r-1}\}$ in $\cT$, we denote by $\<E_0, \dots, E_{r-1}\>$ the smallest full triangulated subcategory of $\cT$ containing $E_0, \dots, E_{r-1}$ closed under isomorphisms and direct summands.

\begin{definition}
Let $\cT$ be a triangulated category.
An object $T\in \cT$ is called tilting if 
\begin{enumerate}
\item{} $\cT=\<T\>$, and 
\item{} $\Hom_{\cT}(T, T[q])=0$ for all $q\neq 0$.
\end{enumerate}
\end{definition}

\begin{remark} \label{rem.Ke} 
If $\cC$ is a $k$-linear abelian category such that $\cD^b(\cC)$ is $\Hom$-finite,
then it is known that $\cD^b(\cC)$ is an algebraic triangulated category (see \cite[Section 1.2 and Section 3.1]{CS})
and Krull-Schmidt (see \cite[Corollary A.2]{CYZ} and \cite[Corollary 2.10]{BS}).
Hence, if $T$ is a tilting object for $\cD^b(\cC)$ such that $\gldim \End_{\cC}(T) <\infty$,
then the functor
\[ \RHom_{\cC}(T, -): \cD^b(\cC) \to \cD^b(\mod \End _{\cC}(T))\]
gives an equivalence of triangulated categories by \cite [Theorem 2.2]{IT}.
\end{remark} 

\begin{definition} \label{def.fano} 
Let $\cC$ be a $k$-linear abelian category having the canonical bimodule $\omega_{\cC}$.
We say that an object $T\in \cD^b(\cC)$ is regular tilting if  
\begin{enumerate} 
\item[(RT1)] $\gldim \End _{\cC}(T)<\infty$,
\item[(RT2)] $\cD^b(\cC)=\<T\>$, and
\item[(RT3)] $\Hom_{\cC}(T, T\lotimes _{\cC}(\omega _{\cC}^{-1})^{\lotimes _{\cC}i}[q])=0$ for all $q\neq 0$ and all $i\geq 0$.
\end{enumerate}
\end{definition} 

The following theorem can be derived from \cite[Theorem 7]{BH}.
For the convenience of the reader, we include our own proof.  

\begin{theorem} \label{thm.rtilt1} 
Let $\cC$ be a $k$-linear abelian category with the canonical bimodule $\omega _{\cC}$, and $T\in \cD^b(\cC)$ a tilting object.
Then $T$ is regular tilting if and only if $R:=\End _{\cC}(T)$ is a quasi-Fano algebra of $\gldim R= \gldim \cC$.
\end{theorem}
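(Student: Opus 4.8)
The plan is to exploit the derived equivalence coming from the tilting object together with the compatibility of Serre functors under such equivalences, reducing the regular-tilting conditions (RT1)--(RT3) to the definition of a quasi-Fano algebra. Write $R := \End_{\cC}(T)$. Since $T$ is a tilting object for $\cD^b(\cC)$, condition (RT1) is precisely the hypothesis $\gldim R < \infty$ needed to invoke Remark \ref{rem.Ke}, so the functor $F := \RHom_{\cC}(T, -) : \cD^b(\cC) \to \cD^b(\mod R)$ is an equivalence of triangulated categories sending $T$ to $R$. Thus, given that $T$ is tilting, (RT1) holds if and only if $\gldim R < \infty$, and this is the first reduction. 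Under this equivalence, (RT2) is automatic (it is part of $T$ being tilting), so the entire content of "regular tilting" beyond "tilting of finite global dimension" lives in (RT3).

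Next I would transport the canonical bimodule along $F$. Because $\cC$ has the canonical bimodule $\omega_{\cC}$, the autoequivalence $-\lotimes_{\cC}\omega_{\cC}[n]$ of $\cD^b(\cC)$ is the Serre functor, where $n = \gldim \cC$ by Remark \ref{rem.cms}(3). Since $\gldim R < \infty$ forces $\gldim \cD^b(\mod R) = \gldim R < \infty$ and $\cD^b(\mod R)$ has the Serre functor $-\lotimes_R \omega_R[\gldim R]$ with $\omega_R = DR[-\gldim R]$, and since a triangulated equivalence intertwines Serre functors (uniquely, as in Remark \ref{rem.cms}(1)), I would deduce first that $\gldim R = \gldim \cC =: n$, and second that $F$ carries $-\lotimes_{\cC}\omega_{\cC}$ to $-\lotimes_R \omega_R$ and hence $-\lotimes_{\cC}\omega_{\cC}^{-1}$ to $-\lotimes_R \omega_R^{-1}$. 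Iterating, $F(T \lotimes_{\cC}(\omega_{\cC}^{-1})^{\lotimes_{\cC} i}) \cong R \lotimes_R (\omega_R^{-1})^{\lotimes_R i} = (\omega_R^{-1})^{\lotimes_R i}$ for all $i \geq 0$.

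With this translation in hand, (RT3) reads: $\Hom_{\cD^b(\mod R)}(R, (\omega_R^{-1})^{\lotimes_R i}[q]) = 0$ for all $q \neq 0$ and all $i \geq 0$, which is exactly $h^q((\omega_R^{-1})^{\lotimes_R i}) = 0$ for all $q \neq 0$ and all $i \geq 0$, i.e. the condition that the two-sided tilting complex $\omega_R^{-1}$ is quasi-ample. (Here I use that $\omega_R^{-1} = \RHom_R(\omega_R, R)$ is a two-sided tilting complex, $\omega_R$ being one since $\gldim R = n < \infty$.) So (RT3) holds if and only if $\omega_R^{-1}$ is quasi-ample, which by definition means $R$ is quasi-Fano. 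Combining the three equivalences: $T$ is regular tilting $\iff$ $\gldim R < \infty$ and $\omega_R^{-1}$ is quasi-ample and $\gldim R = \gldim \cC$ $\iff$ $R$ is quasi-Fano with $\gldim R = \gldim \cC$.

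The main obstacle I anticipate is making the intertwining of Serre functors fully rigorous as an isomorphism of \emph{bimodules} (adjoint pairs of functors) rather than merely of autoequivalences: one must check that $F$ transports the adjoint-pair structure of $\omega_{\cC}$ to that of $\omega_R$, so that $F$ really sends $-\lotimes_{\cC}\omega_{\cC}^{-1}$ to $-\lotimes_R\omega_R^{-1}$ and commutes with the iterated tensor powers appearing in (RT3). This is a formal but slightly delicate point about uniqueness of Serre functors and compatibility of the induced bimodule structures under derived equivalence; everything else is bookkeeping with the equivalence $F$ and the definitions.
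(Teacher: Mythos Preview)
Your overall strategy---transport everything through the derived equivalence $F=\RHom_{\cC}(T,-)$ and use uniqueness of the Serre functor---is exactly the paper's approach, and the $(\Leftarrow)$ direction is essentially correct as you wrote it.

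There is, however, a genuine gap in your $(\Rightarrow)$ direction. You assert that from the intertwining of Serre functors you can ``deduce first that $\gldim R = \gldim \cC$''. This does not follow. What intertwining gives you is only $F\circ(-\lotimes_{\cC}\omega_{\cC}[m])\cong (-\lotimes_R DR)\circ F$, where $m=\gldim\cC$; equivalently, $-\lotimes_{\cC}\omega_{\cC}^{-1}$ corresponds under $F$ to $-\lotimes_R L$ with $L=\omega_R^{-1}[m-n]$ and $n=\gldim R$. The decomposition of the Serre functor into a shift and a ``bimodule part'' is not canonical at the triangulated level, and derived equivalences of finite-dimensional algebras do not preserve global dimension in general, so you cannot read off $m=n$ directly. (The obstacle you flag at the end---compatibility of bimodule structures---is not the real issue; the shift discrepancy is.)

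The paper fixes this by not assuming $m=n$ up front: it works with $L=\omega_R^{-1}[m-n]$, uses (RT3) to compute
\[
h^q(L^{\lotimes_R i})\cong \Ext^q_{\cC}\bigl(T,\,T\lotimes_{\cC}(\omega_{\cC}^{-1})^{\lotimes_{\cC} i}\bigr)=0\quad(q\neq 0,\ i\ge 0),
\]
so $L$ is quasi-ample, and then observes $L^{-1}=DR[-m]$ and invokes \cite[Remark~1.3]{MM} (cf.\ \cite[Remark~4.4]{Min}) to conclude that $R$ is quasi-Fano with $\gldim R=m=\gldim\cC$. In other words, (RT3) is needed not only to verify quasi-ampleness of $\omega_R^{-1}$ but already to pin down the equality of global dimensions. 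Once you insert this step, your argument coincides with the paper's.
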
 

\begin{proof}  Note that since $\cC$ is assumed to have a canonical bimodule, $\cD^b(\cC)$ is $\Hom$-finite.  
$(\Rightarrow)$
Assume that $T$ is a regular tilting object of $\cD^b(\cC)$.
Let $-\lotimes_{\cC}\omega _{\cC}[m]$ be the Serre functor for $\cD^b(\cC)$ and let $\gldim R=n$.
Then we have $m= \gldim \cC$ by Remark \ref{rem.cms} (3).
Using Remark \ref{rem.Ke} and the uniqueness of the Serre functor, we have the following commutative diagram
$$\begin{CD}
\cD^b(\cC) @>\RHom_{\cC}(T, -)>\cong > \cD^b(\mod R) \\
@V-\lotimes_{\cC}\omega _{\cC}[m]V\cong V @V\cong V{-\lotimes _RDR=-\lotimes _R\omega _R[n]}V \\
\cD^b(\cC) @>\cong >\RHom_{\cC}(T, -)> \cD^b(\mod R).
\end{CD}$$
This induces the following commutative diagram 
$$\begin{CD}
\cD^b(\cC) @>\RHom_{\cC}(T, -)>\cong >  \cD^b(\mod R) \\
@V-\lotimes_{\cC}\omega _{\cC}^{-1}V\cong V @V\cong V{-\lotimes _RL}V \\
\cD^b(\cC) @>\cong >\RHom_{\cC}(T, -)> \cD^b(\mod R)
\end{CD}$$
where $L=\omega _R^{-1}[m-n]$. Since 
$$h^q(L^{\lotimes _Ri})\cong \Ext^q_R(R, R\lotimes _R(L^{\lotimes _Ri})) \cong \Ext^q_{\cC}(T, T\lotimes _{\cC}(\omega _{\cC}^{-1})^{\lotimes _{\cC}i}) =0$$
for all $q\neq 0$ and all $i\geq 0$, 
we see that $L$ is a quasi-ample two-sided tilting complex of $R$.
Since $L^{-1}=\omega_R[n-m]=DR[-m]$, it follows that $R$ is a quasi-Fano algebra of $\gldim R=m=\gldim \cC$ by \cite [Remark 1.3]{MM} (cf. \cite [Remark 4.4]{Min}).

$(\Leftarrow)$
If $T \in \cC$ is a tilting object for $\cD^b(\cC)$ and $R=\End _{\cC}(T)$ is a quasi-Fano algebra of $\gldim R=\gldim \cC$,
then 
we have the following commutative diagram
\begin{align} \label{cd:1}
\begin{CD}
\cD^b(\cC) @>\RHom_{\cC}(T, -)>\cong > \cD^b(\mod R) \\
@V-\lotimes_{\cC}\omega _{\cC}^{-1}V\cong V @V\cong V{-\lotimes _R\omega _R^{-1}}V \\
\cD^b(\cC) @>\cong >\RHom_{\cC}(T, -)> \cD^b(\mod R)
\end{CD}
\end{align}
by Remark \ref{rem.Ke}, Remark \ref{rem.cms} (3), and the uniqueness of the Serre functor.
Since $\omega _{R}^{-1}$ is quasi-ample, we have
\[ \Hom_{\cC}(T, T\lotimes _{\cC}(\omega _{\cC}^{-1})^{\lotimes _{\cC}i}[q]) \cong  \Hom_{R}(R, R\lotimes _{R}(\omega _{R}^{-1})^{\lotimes _{R}i}[q])\cong h^q((\omega _{R}^{-1})^{\lotimes _{R}i})=0\]
for all $q\neq 0$ and all $i\geq 0$, so $T$ is a regular tilting object of $\cD^b(\cC)$.
\end{proof}

\begin{theorem} \label{thm.rtilt2} 
Let $\cC$ be a $k$-linear abelian category with the canonical bimodule $\omega _{\cC}$. 
If $T\in \cC$ is regular tilting for $\cD^b(\cC)$ and $(T, -\otimes _{\cC}\omega _{\cC}^{-1})$ is ample for $\cC$, then 
\begin{enumerate}
\item{} $R:=\End _{\cC}(T)$ is a Fano algebra of $\gldim R=\gldim \cC$, and 
\item{} $A:=B(\cC, T, -\otimes _{\cC}\omega _{\cC}^{-1})_{\geq 0} \cong \Pi R$ is a graded right coherent AS-regular (Calabi-Yau) algebra over $R:=\End _{\cC}(T)$ of dimension $\gldim \cC+1$ and of Gorenstein parameter 1. 
\end{enumerate}
\end{theorem}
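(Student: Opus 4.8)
The plan is as follows. By \thmref{thm.rtilt1}, $R:=\End_\cC(T)$ is a quasi-Fano algebra with $\gldim R=\gldim\cC=:n$, so $L:=\omega_R^{-1}$ is a quasi-ample two-sided tilting complex of $R$ with $L^{-1}=\omega_R=DR[-n]$; hence for (1) it remains only to show that $(\cD^{L,\geq0},\cD^{L,\leq0})$ is a $t$-structure on $\cD^b(\mod R)$, the equality $\gldim R=\gldim\cC$ being already recorded. Since $\cC$ has a canonical bimodule, $\cD^b(\cC)$ is $\Hom$-finite (\remref{rem.cms}(2)) and $\gldim\cC=n<\infty$ (\remref{rem.cms}(3)). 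Put $s:=-\otimes_\cC\omega_\cC^{-1}$; this is an exact autoequivalence of $\cC$, hence $t$-exact on $\cD^b(\cC)$ for the standard $t$-structure, whose aisles I write $\cD^{\geq0},\cD^{\leq0}$. By \remref{rem.Ke} and (RT1), $G:=\RHom_\cC(T,-)\colon\cD^b(\cC)\to\cD^b(\mod R)$ is an equivalence with $G(T)=R$, and the proof of \thmref{thm.rtilt1} (diagram \eqref{cd:1}) gives $G\circ s\cong(-\lotimes_RL)\circ G$; therefore $h^q(M\lotimes_RL^{\lotimes_Ri})\cong\Hom_{\cD^b(\cC)}(T,s^i(G^{-1}M)[q])$, so $G^{-1}(\cD^{L,\geq0})=\cE^{\geq0}$ and $G^{-1}(\cD^{L,\leq0})=\cE^{\leq0}$, where $\cE^{\geq0}:=\{\cM\in\cD^b(\cC)\mid\Hom_{\cD^b(\cC)}(T,s^i\cM[q])=0\text{ for }q<0,\ i\gg0\}$ and $\cE^{\leq0}$ is defined similarly with $q>0$. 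It thus suffices to prove $\cE^{\geq0}=\cD^{\geq0}$ and $\cE^{\leq0}=\cD^{\leq0}$. As preparation, apply \thmref{thm.AZ}(2) (valid since $(T,s)$ is ample for the $\Hom$-finite $k$-linear abelian category $\cC$): $A:=B(\cC,T,s)_{\geq0}$ is graded right coherent with $A_0\cong R$, the functor $\cM\mapsto\pi\uH^0(\cM)_{\geq0}$ is an isomorphism of algebraic triples $(\cC,T,s)\cong(\tails A,\cA,(1))$, and $\gldim(\tails A)=n$; in particular, for any $0\neq\cN\in\cC$ the space $\Hom_\cC(T,s^i\cN)$ is the degree-$i$ part of a finitely generated, infinite-dimensional graded $A$-module, hence is nonzero for infinitely many $i$.

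Now the four inclusions. (i) $\cD^{\geq0}\subseteq\cE^{\geq0}$: if $h^q(\cM)=0$ for $q<0$ then, for $q<0$, $s^i\cM[q]\in\cD^{\geq1}$ while $T\in\cD^{\leq0}$, so the Hom vanishes by orthogonality. (ii) $\cD^{\leq0}\subseteq\cE^{\leq0}$: a d\'evissage along the truncation triangles of $\cM$ reduces this to the Serre-type vanishing $\Ext^p_\cC(T,s^i\cN)=0$ for every $\cN\in\cC$, $p\geq1$, $i\gg0$; to see the latter, use (A1) to pick a surjection $\bigoplus_ks^{-a_k}T\twoheadrightarrow\cN$ with $a_k\geq0$, note that (RT3) gives $\Ext^{\geq1}_\cC\bigl(T,s^i(\bigoplus_ks^{-a_k}T)\bigr)=0$ once $i\geq\max_ka_k$, so $\Ext^p_\cC(T,s^i\cN)\cong\Ext^{p+1}_\cC(T,s^i\cN_1)$ with $\cN_1$ the kernel, and iterate $n=\gldim\cC$ times to get $\Ext^p_\cC(T,s^i\cN)\cong\Ext^{p+n}_\cC(T,s^i\cN_n)=0$ since $p+n>n$. (iii) $\cE^{\geq0}\subseteq\cD^{\geq0}$: if $\cM\notin\cD^{\geq0}$, choose $q_0<0$ minimal with $h^{q_0}(\cM)\neq0$; from the triangle $h^{q_0}(\cM)[-q_0]\to\cM\to\tau^{>q_0}\cM\to$ and orthogonality one gets an injection $\Hom_\cC(T,s^ih^{q_0}(\cM))\hookrightarrow\Hom_{\cD^b(\cC)}(T,s^i\cM[q_0])$, and by the preparation the source is nonzero for infinitely many $i$, so $\cM\notin\cE^{\geq0}$. (iv) $\cE^{\leq0}\subseteq\cD^{\leq0}$: symmetric, using the top cohomology $h^{q_1}(\cM)$ with $q_1>0$ and a surjection $\Hom_{\cD^b(\cC)}(T,s^i\cM[q_1])\twoheadrightarrow\Hom_\cC(T,s^ih^{q_1}(\cM))$. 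Hence $(\cE^{\geq0},\cE^{\leq0})$ is the standard $t$-structure, $(\cD^{L,\geq0},\cD^{L,\leq0})$ is a $t$-structure, $\omega_R^{-1}$ is ample, and $R$ is a Fano algebra with $\gldim R=\gldim\cC$; this proves (1).

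For (2): since $R$ is a Fano algebra of global dimension $n$, \thmref{thm.MM} gives that $\Pi R\cong B(\cD^b(\mod R),R,-\lotimes_R\omega_R^{-1})_{\geq0}$ is a graded right coherent AS-regular (Calabi--Yau) algebra over $R$ of dimension $n+1$ and of Gorenstein parameter $1$. Moreover, $G$ together with $G(T)\cong R$ and $G\circ s\cong(-\lotimes_R\omega_R^{-1})\circ G$ is an isomorphism of algebraic triples $(\cD^b(\cC),T,s)\cong(\cD^b(\mod R),R,-\lotimes_R\omega_R^{-1})$, so by \remref{rem.mat} the associated graded algebras are isomorphic; since $T$ and all $s^iT$ lie in the heart $\cC$ one has $B(\cD^b(\cC),T,s)=B(\cC,T,s)$, and passing to degree-$\geq0$ parts gives $A=B(\cC,T,-\otimes_\cC\omega_\cC^{-1})_{\geq0}\cong\Pi R$. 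Thus $A$ has all the asserted properties, proving (2).

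I expect the crux to be the identification $\cE^{\geq0}=\cD^{\geq0}$, $\cE^{\leq0}=\cD^{\leq0}$ in (1): showing that ampleness of $(T,-\otimes_\cC\omega_\cC^{-1})$ is precisely what upgrades ``quasi-Fano'' to ``Fano''. Its two real ingredients are the Serre-type vanishing of (ii) (where (A1), (RT3), and $\gldim\cC<\infty$ suffice) and the non-vanishing of $\Hom_\cC(T,s^i\cN)$ for infinitely many $i$ in (iii)--(iv) (where ampleness, via \thmref{thm.AZ}, is genuinely used); the technical care lies in managing the various ``$i\gg0$'' thresholds and transporting the $t$-exactness data across $G$.
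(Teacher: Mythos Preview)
Your proof is correct, but it takes a different route from the paper's for part (1). The paper's argument is short: since $(T,-\otimes_{\cC}\omega_{\cC}^{-1})$ is ample, Theorem~\ref{thm.AZ} gives that $A\cong B(\cD^b(\mod R),R,-\lotimes_R\omega_R^{-1})_{\geq 0}$ is graded right coherent, and then it invokes \cite[Theorem 3.7]{Min} as a black box to conclude that $\omega_R^{-1}$ is ample. You instead unpack this step, showing directly that the pair $(\cD^{L,\geq 0},\cD^{L,\leq 0})$ transported via $G$ coincides with the standard $t$-structure on $\cD^b(\cC)$. This is more self-contained (you never need to quote the internal mechanism of \cite{Min}) and makes transparent exactly where ampleness enters: the Serre-type vanishing in (ii) and the non-vanishing of $\Hom_\cC(T,s^i\cN)$ for arbitrarily large $i$ in (iii)--(iv). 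For part (2), your argument and the paper's are essentially identical.

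One small point: your step (iv) is \emph{not} quite symmetric to (iii). In (iii) the injection follows from the $t$-structure orthogonality $\Hom(\cD^{\leq 0},\cD^{\geq 1})=0$, but the surjection in (iv) would require $\Hom(T,\cD^{\leq -2})=0$, which is false in general (the relevant spectral sequence has $\Ext^p_\cC(T,h^{-p}(-))$ on the $E_2$-page). The fix is already implicit in your (ii): for $i\gg 0$ one has $\Ext^{p}_\cC(T,s^ih^j(\cM))=0$ for all $p\geq 1$ and all $j$, so the spectral sequence degenerates and $\Hom_{\cD^b(\cC)}(T,s^i\cM[q_1])\cong\Hom_\cC(T,s^ih^{q_1}(\cM))$ for such $i$; this is what you need and your argument goes through. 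It would be cleaner to state (iv) this way rather than to call it ``symmetric''.
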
 

\begin{proof}
Since $T$ is regular tilting for $\cD^b(\cC)$, $R$ is a quasi-Fano algebra of $\gldim R=\gldim \cC$  by Theorem \ref{thm.rtilt1}, so
$\omega _R^{-1}$ is a quasi-ample two-sided tilting complex of $R$.
The commutative diagram (\ref{cd:1}) induces the following isomorphisms of graded algebras 
\begin{align*}
A=B(\cC, T , -\otimes _{\cC}\omega_{\cC}^{-1})_{\geq 0} \cong 
B(\cD^b(\cC), T, -\lotimes_{\cC}\omega _{\cC}^{-1})_{\geq 0}\cong B(\cD^b(\mod R), R, - \lotimes _{R}\omega _R^{-1})_{\geq 0}.
\end{align*}
Since $(T, -\otimes _{\cC}\omega _{\cC}^{-1})$ is ample for $\cC$,
$B(\cD^b(\mod R), R, -\lotimes _R\omega _R^{-1})_{\geq 0} \cong A$ 
is a graded right coherent algebra by Theorem \ref{thm.AZ}, so $\omega _R^{-1}$ is an ample two-sided tilting complex of $R$ by \cite [Theorem 3.7]{Min}, hence $R$ is a Fano algebra of $\gldim R=\gldim \cC$.
By Theorem \ref{thm.MM}, 
\begin{align*}
A
\cong B(\cD^b(\mod R), R, - \lotimes _{R}\omega _R^{-1})_{\geq 0}\cong  \Pi R
\end{align*}
is a graded right coherent AS-regular (Calabi-Yau) algebra over $R$ of dimension $\gldim R+1=\gldim \cC+1$ and of Gorenstein parameter 1.   
\end{proof}

\subsection{Relative Helices} 

In this subsection, we will define a ``relaxed" version of a helix.  

\begin{definition} \label{dfn.res}
Let $\cT$ be a $k$-linear triangulated category.  
\begin{enumerate}
\item{} A sequence of objects $\{E_0, \dots, E_{\ell-1}\}$ in $\cT$ is called an exceptional sequence (resp. a relative exceptional sequence) if 
\begin{enumerate}
\item[(RE1)] $\End_{\cT}(E_i)=k$ (resp. $\gldim \End_{\cT}E_i<\infty$) for every $i=0, \dots, \ell-1$, 
\item[(RE2)] $\Hom_{\cT}(E_i, E_i[q])=0$ for every $q\neq 0$ and every $i=0, \dots, \ell-1$, and 
\item[(RE3)] $\Hom_{\cT}(E_i, E_j[q])=0$ for every $q$ and every $0\leq j<i\leq \ell-1$.
\end{enumerate}
A (relative) exceptional sequence $\{E, F\}$ consisting of two objects is called a (relative) exceptional pair.  
\item{} A sequence of objects $\{E_0, \dots, E_{\ell-1}\}$ in $\cT$ is called full if $\<E_0, \dots, E_{\ell-1}\>=\cT$.  
\end{enumerate}
\end{definition} 

\begin{remark} \label{rem.gd}
If $\{E_0, \dots, E_{\ell-1}\}$ is a relative exceptional sequence for a $k$-linear triangulated category $\cT$, then $\gldim \End _{\cT}(\bigoplus _{i=0}^{\ell-1}E_i)<\infty$.  
\end{remark} 

\begin{definition} \label{dfn.rh}
Let $\cC$ be a $k$-linear abelian category having the canonical bimodule $\omega _{\cC}$.
\begin{enumerate}
\item{} A sequence of objects $\{E_i\}_{i\in \ZZ}$ in $\cD^b(\cC)$ is called a (relative) helix of period $\ell$ if, for each $i\in \ZZ$, 
\begin{enumerate}
\item[(H1)] $\{E_{i}, \dots, E_{i+\ell-1}\}$ is a (relative) exceptional sequence for $\cD^b(\cC)$, and 
\item[(H2)] $E_{i+\ell}\cong E_i\lotimes _{\cC}\omega _{\cC}^{-1}$. 
\end{enumerate}
\item{} A relative helix $\{E_i\}_{i\in \ZZ}$ of period $\ell$ is called full if, for each $i\in \ZZ$, $\<E_{i}, \dots, E_{i+\ell-1}\>=\cD^b(\cC)$.  
\item{} A relative helix $\{E_i\}_{i\in \ZZ}$ of period $\ell$ is called geometric if 
$\Hom_{\cT}(E_i, E_j[q])=0$
for every $q\neq 0$ and every $i\leq j$.
\end{enumerate}
\end{definition} 

\begin{definition}
Let $\cT$ be a $k$-linear triangulated category.
For a pair of objects $\{E, F\}$ in $\cT$, we define $\Hom^{\bullet}_{\cT}(E,F) \in \cD(\Mod k)$ by
$(\Hom^{\bullet}_{\cT}(E,F))^i = \Hom_{\cT}(E,F[i])[-i]$
with trivial differentials.  Moreover we define objects $L_E F$ and $R_F E$ in $\cT$ by using distinguished triangles
\begin{align*}
&L_EF \to \Hom^{\bullet}_{\cT}(E,F) \otimes_k E \to F \to, \\
&E \to D\Hom^{\bullet}_{\cT}(E,F) \otimes_k F \to R_FE \to.
\end{align*}
We call $L_E F$ (resp. $R_F E$)  the left mutation of $F$ by $E$ (resp. the right mutation of $E$ by $F$).
\end{definition}

It is known that if $\{E, F\}$ is an exceptional pair,
then $\{L_E F, E\}$ and $\{F, R_FE\}$ are both exceptional pairs, and $R_EL_EF\cong F, L_FR_FE\cong E$.
Mutations of exceptional pairs can be extended to mutations of exceptional sequences.
For a sequence of objects $\e = \{E_0, \dots, E_{\ell-1}\}$,
we define
\begin{align*}
&L_i \e = \{E_0, \dots, E_{i-1}, L_{E_i}E_{i+1}, E_i, E_{i+2}, \dots E_{\ell-1} \},\\
&R_i \e = \{E_0, \dots, E_{i-1}, E_{i+1}, R_{E_{i+1}}E_i, E_{i+2}, \dots E_{\ell-1} \}
\end{align*}
for each $i=0, \dots, \ell-2$. 

\begin{lemma}  [{\cite[Assertion 2.1, Lemma 2.2, Assertion 2.3.a]{B}}] \label{lem.mue}
Let $\e = \{E_0, \dots, E_{\ell-1}\}$ be a sequence of objects in a $k$-linear triangulated category.  For each $i=0, \dots, \ell-2$, the following are equivalent: 
\begin{enumerate}
\item{} $\e$ is a (full) exceptional sequence. 
\item{} $L_i\e$ is a (full) exceptional sequence. 
\item{} $R_i\e$ is a (full) exceptional sequence. 
\end{enumerate}
\end{lemma}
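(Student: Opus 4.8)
The statement to prove is Lemma~\ref{lem.mue}, which is cited from Bondal's paper \cite{B}. The plan is to reduce everything to the case of an exceptional pair and then invoke the standard properties of mutations already recorded in the excerpt just before the lemma.

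First I would observe that all three conditions concern only the local data at positions $i$ and $i+1$: the objects $E_0,\dots,E_{i-1}$ and $E_{i+2},\dots,E_{\ell-1}$ are untouched by either $L_i$ or $R_i$, and the pairwise $\Hom$-vanishing conditions (RE3) between these untouched objects are literally the same in $\e$, $L_i\e$, and $R_i\e$. So the only conditions that could change are: condition (RE2) for the two objects in slots $i,i+1$; condition (RE3) among the objects in slots $i,i+1$; and condition (RE3) between each of slots $i,i+1$ and each untouched slot $j<i$ or $j>i+1$. The key structural fact, recalled in the excerpt, is that if $\{E,F\}$ is an exceptional pair then so are $\{L_EF,E\}$ and $\{F,R_FE\}$; applying this to $\{E_i,E_{i+1}\}$ handles (RE2) for the mutated objects and (RE3) between them. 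For the interaction with the untouched slots, I would use the defining distinguished triangles of $L_{E_i}E_{i+1}$ and $R_{E_{i+1}}E_i$: since $L_{E_i}E_{i+1}$ sits in a triangle built from $E_i$ and $E_{i+1}$ (tensored with finite-dimensional graded vector spaces), applying $\Hom_\cT(-,E_j[q])$ or $\Hom_\cT(E_j[q],-)$ to that triangle and using the long exact sequence shows that the vanishing (RE3) for the mutated object against $E_j$ follows from the vanishing for $E_i$ and $E_{i+1}$ against $E_j$, and conversely. This gives the equivalence (1)$\iff$(2); the equivalence (1)$\iff$(3) is entirely analogous (or follows from the relations $R_iL_i\e\cong\e$ via the pairwise identity $R_EL_EF\cong F$).

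Next I would address the parenthetical ``full'' assertion. Here the point is that $\<E_0,\dots,E_{i-1},L_{E_i}E_{i+1},E_i,E_{i+2},\dots,E_{\ell-1}\>$ and $\<E_0,\dots,E_{\ell-1}\>$ are the same triangulated subcategory: from the triangle defining $L_{E_i}E_{i+1}$, the object $L_{E_i}E_{i+1}$ lies in $\<E_i,E_{i+1}\>$, and symmetrically $E_{i+1}$ lies in $\<L_{E_i}E_{i+1},E_i\>$, so the two generating sets generate the same thick (triangulated, summand-closed) subcategory. Hence $\<L_i\e\>=\<\e\>$, and likewise $\<R_i\e\>=\<\e\>$, so fullness is preserved under mutation in either direction.

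The main obstacle is bookkeeping rather than conceptual: one must carefully track the index ranges and verify that the $\Hom$-vanishing between a mutated object and an \emph{untouched} object at a higher or lower index is correctly deduced from the triangle, paying attention to the direction of the vanishing in (RE3) (namely $\Hom_\cT(E_a,E_b[q])=0$ for $b<a$) so that the long exact sequence is applied on the correct variable. Since $\cT$ is only assumed $k$-linear triangulated, I would not assume any $\Hom$-finiteness beyond what is needed to make $\Hom^\bullet_\cT(E_i,E_{i+1})$ a well-defined bounded complex of vector spaces, which is guaranteed by (RE1)--(RE3) for the exceptional pair $\{E_i,E_{i+1}\}$ (or by the relative version of those conditions). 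With that in hand, the proof is a routine unwinding of the triangles, exactly as in \cite[Assertion 2.1, Lemma 2.2, Assertion 2.3.a]{B}, so I would simply cite those results for the exceptional case and add the short argument above to cover the ``full'' clause and the reduction of the general sequence to the pair sitting in slots $i,i+1$.
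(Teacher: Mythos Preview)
Your proposal is correct, and in fact you have supplied more than the paper does: the paper gives no proof of this lemma at all, only the citation to \cite[Assertion 2.1, Lemma 2.2, Assertion 2.3.a]{B}. Your reduction to the pair in slots $i,i+1$, the use of the defining triangles to propagate (RE3) to the untouched objects via long exact sequences, and the argument that $\langle L_i\e\rangle=\langle\e\rangle=\langle R_i\e\rangle$ for the fullness clause are all standard and valid. One minor point: the lemma as stated concerns exceptional sequences (with $\End_\cT(E_i)=k$), not the relative version, so your aside about the ``relative version of those conditions'' is unnecessary here; and your remark about $\Hom^\bullet_\cT(E_i,E_{i+1})$ being bounded is automatic since for an exceptional pair $\Hom_\cT(E_i,E_{i+1}[q])$ can be nonzero only for finitely many $q$ is not guaranteed a priori, but Bondal works in a setting where this holds, and in any case the paper simply defers to his results.
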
 

We inductively define $L^iE_j:=L_{E_{j-i}}(L^{i-1}E_j)$ and $R^iE_j=R_{E_{i+j}}(R^{i-1}E_j)$ for $i\geq 1$.  

\begin{remark} \label{rem.lg1} 
Let $\cC$ be a $k$-linear abelian category.  There is another definition of a helix, which requires the condition 
\begin{enumerate}
\item[(H2)'] $E_{i+\ell}\cong R^{\ell-1}E_i$ (or equivalently, $E_{i-\ell}\cong L^{\ell-1}E_i$)  
\end{enumerate}
in place of (H2) (see \cite [Definition 4.3]{Mbc}). 
If $\cC$ has the canonical bimodule $\omega _{\cC}$,
then $L^{\ell-1}E_i\cong 
E_i\lotimes _{\cC}\omega _{\cC}[\gldim \cC+1-\ell]$ by \cite [Assertion 4.2]{B} and Remark \ref{rem.cms} (3), so the above definition of a helix agrees with the one given in \cite [Definition 4.3]{Mbc} if and only if $\ell=\gldim \cC+1$.
\end{remark} 

\begin{lemma} \label{lem.lr}
Let $\cC$ be a $k$-linear abelian category having the canonical bimodule $\omega _{\cC}$, and $\{E_i\}_{i\in \ZZ}$ a (full) geometric relative helix of period $\ell$ for $\cD^b(\cC)$.   For $r\in \NN^+$ such that $r\mid \ell$, $\{\bigoplus _{i\in I_j}E_i\}_{j\in \ZZ}$ where $I_j=\{i\in \ZZ\mid jr\leq i \leq (j+1)r-1\}$ is a (full) geometric relative helix of period $\ell/r$ for $\cD^b(\cC)$.  In particular, for an algebraic pair $(\cO, s)$ for $\cC$, if $\{s^i\cO\}_{i\in \ZZ}$ is a (full) geometric relative helix of period $\ell$ for $\cD^b(\cC)$, then $\{s^{jr}(\bigoplus _{i=0}^{r-1}s^i\cO)\}_{j\in \ZZ}$ is a (full) geometric relative helix of period $\ell/r$ for $\cD^b(\cC)$.
\end{lemma}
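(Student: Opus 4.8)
The plan is to verify, one axiom at a time, that $\{F_j\}_{j\in\ZZ}$ with $F_j:=\bigoplus_{i\in I_j}E_i$ is a (full) geometric relative helix of period $\ell':=\ell/r$, transferring everything from $\{E_i\}_{i\in\ZZ}$ via a single combinatorial observation: a block of $r$ consecutive windows of length $\ell'$ in the $F$-indexing is exactly one window of length $\ell$ in the $E$-indexing. Concretely, $I_j\cup I_{j+1}\cup\cdots\cup I_{j+\ell'-1}=\{i\in\ZZ\mid jr\le i\le jr+\ell-1\}$, and in particular $I_{j+\ell'}=I_j+\ell$.

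First I would dispatch (H2): since $I_{j+\ell'}=I_j+\ell$ and $-\lotimes_{\cC}\omega_{\cC}^{-1}$ is additive, $F_{j+\ell'}=\bigoplus_{i\in I_j}E_{i+\ell}\cong\bigoplus_{i\in I_j}\bigl(E_i\lotimes_{\cC}\omega_{\cC}^{-1}\bigr)\cong F_j\lotimes_{\cC}\omega_{\cC}^{-1}$, which is (H2) with period $\ell'$. Next I would handle the geometric condition (which also gives (RE2)): for $i\le j'$ one has $\Hom_{\cD^b(\cC)}(F_i,F_{j'}[q])=\bigoplus_{a\in I_i,\,b\in I_{j'}}\Hom_{\cD^b(\cC)}(E_a,E_b[q])$, and each summand vanishes for $q\ne 0$ because either $a\le b$, and then geometricity of $\{E_i\}$ applies, or else $i=j'$ and $0<a-b\le r-1\le\ell-1$, so $E_a$ lies in the window $\{E_b,\dots,E_{b+\ell-1}\}$ and (RE3) forces vanishing for all $q$. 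Taking $i=j'=j+s$ yields (RE2) for $F_{j+s}$.

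It then remains to check that $\{F_j,\dots,F_{j+\ell'-1}\}$ is a relative exceptional sequence. For (RE1), each $\{E_i\mid i\in I_{j+s}\}$ is a consecutive sub-collection of $r\le\ell$ terms of a window, hence is itself a relative exceptional sequence, so $\gldim\End_{\cD^b(\cC)}(F_{j+s})<\infty$ by Remark~\ref{rem.gd}; (RE2) was just noted. For (RE3), if $0\le s<s'\le\ell'-1$ then any $a\in I_{j+s'}$ and $b\in I_{j+s}$ satisfy $b<a$ and $a-b\le(s'-s+1)r-1\le\ell' r-1=\ell-1$, so once again $E_a\in\{E_b,\dots,E_{b+\ell-1}\}$ and $\Hom_{\cD^b(\cC)}(E_a,E_b[q])=0$ for all $q$. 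Fullness, when assumed, is immediate from the combinatorial observation: $\<F_j,\dots,F_{j+\ell'-1}\>$ contains every $E_i$ with $jr\le i\le jr+\ell-1$ (as $\<-\>$ is closed under direct summands), hence contains $\<E_{jr},\dots,E_{jr+\ell-1}\>=\cD^b(\cC)$. Finally, the ``in particular'' assertion is the special case $E_i=s^i\cO$: since $s$ is additive, $F_j=\bigoplus_{i=jr}^{(j+1)r-1}s^i\cO=s^{jr}\bigl(\bigoplus_{i=0}^{r-1}s^i\cO\bigr)$, so the general statement applies verbatim.

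No step presents a genuine obstacle. The only points needing a little care are the estimates showing $a-b\le\ell-1$ in the relevant index ranges, which is what licenses invoking (RE3) in all cohomological degrees rather than merely the geometric vanishing for $q\ne 0$, together with the (trivial but necessary) remark that a consecutive sub-collection of a relative exceptional sequence is again one.
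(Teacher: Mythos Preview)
Your proof is correct and follows essentially the same approach as the paper's: decompose each $\Hom_{\cD^b(\cC)}(F_i,F_{j'}[q])$ into summands $\Hom_{\cD^b(\cC)}(E_a,E_b[q])$, then use geometricity for $a\le b$ and the index bound $0<a-b\le\ell-1$ (so (RE3) for the original window applies) otherwise; (RE1) via Remark~\ref{rem.gd}; (H2) via additivity of $-\lotimes_{\cC}\omega_{\cC}^{-1}$. You supply more detail on fullness (closure under direct summands) and the index estimates than the paper, which simply declares the full and geometric properties ``straightforward'', but the argument is the same.
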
 

\begin{proof} First, we show that (H1), that is, $\{\bigoplus _{i\in I_j}E_i, \dots,  \bigoplus _{i\in I_{j+\ell/r-1}}E_i\}$ is a relative exceptional sequence for $\cD^b(\cC)$ for every $j\in \ZZ$.

(RE1) For any $j \in \ZZ$, $\{E_{jr},\dots,E_{(j+1)r-1}\}$ is a relative exceptional sequence, so we have $\gldim \End_{\cC}(\bigoplus_{i\in I_j}E_i)<\infty$ by Remark \ref{rem.gd}.

(RE2) Using the facts that $\{E_{jr},\dots,E_{(j+1)r-1}\}$ is a relative exceptional sequence for any $j \in \ZZ$ and $\{E_i\}_{i\in \ZZ}$ is geometric, we have
$\Hom_{\cC}(\bigoplus _{i\in I_j}E_i, \bigoplus _{i\in I_j}E_i[q])=0$ for every $q\neq 0$ and every $j=0, \dots, \ell/r-1$.

(RE3) For any $i\in \ZZ$ and any $i \leq j_1< j_2 \leq i + \ell/r-1$, if $i_1 \in I_{j_1}$ and $i_2 \in I_{j_2}$, then
$\Hom_{\cC}(E_{i_1}, E_{i_2}[q])=0$ for every $q$ since $0<i_2-i_1\leq \ell-1$,
so we have $\Hom_{\cC}(\bigoplus _{i\in I_{j_1}}E_i, \bigoplus _{i\in I_{j_2}}E_i[q])=0$ for every $q$.

Secondly, since 
$$ \bigoplus _{i\in I_{j+\ell/r}}E_i =\bigoplus _{i\in I_{j}}E_{i+\ell} \cong \bigoplus _{i\in I_j} (E_i\lotimes _{\cC}\omega _{\cC}^{-1}) \cong (\bigoplus _{i\in I_j} E_i)\lotimes _{\cC}\omega _{\cC}^{-1},$$
(H2) is satisfied, so $\{\bigoplus _{i\in I_j}E_i\}_{j\in \ZZ}$ is a relative helix of period $\ell/r$ for $\cD^b(\cC)$.

The full and geometric properties are straightforward.
\end{proof}

\begin{lemma} \label{lem.period1.1} 
Let $\cC$ be a $k$-linear abelian category having the canonical bimodule $\omega _{\cC}$.
If $\{E_i\}_{i\in \ZZ}$ is a full geometric relative helix of period 1 for $\cD^b(\cC)$,
then $E_i$ is a regular tilting object of $\cD^b(\cC)$ for every $i\in \ZZ$. 
\end{lemma}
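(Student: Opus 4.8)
The plan is to verify the three conditions (RT1)--(RT3) of Definition~\ref{def.fano} directly from the definition of a full geometric relative helix, so the argument is essentially a matter of unwinding definitions and no deep input is required. Fix $i \in \ZZ$ throughout.

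First, (RT2) is immediate: for a helix of period $1$, the fullness condition of Definition~\ref{dfn.rh}(2) reads $\<E_i\> = \cD^b(\cC)$, which is precisely (RT2). Next, for (RT1), condition (H1) in the period-$1$ case asserts that the one-term sequence $\{E_i\}$ is a relative exceptional sequence, so (RE1) of Definition~\ref{dfn.res} gives $\gldim \End_{\cD^b(\cC)}(E_i) < \infty$; since $\End_{\cC}(E_i)$ is by our conventions $\Hom_{\cD(\cC)}(E_i, E_i) = \End_{\cD^b(\cC)}(E_i)$, this is exactly (RT1).

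For (RT3) I would first record the identity $E_i \lotimes_{\cC} (\omega_{\cC}^{-1})^{\lotimes_{\cC} j} \cong E_{i+j}$ for all $j \in \NN$, proved by an immediate induction on $j$ from (H2): the base case $j = 0$ is trivial, and the inductive step is
$$E_i \lotimes_{\cC} (\omega_{\cC}^{-1})^{\lotimes_{\cC}(j+1)} \cong \bigl(E_i \lotimes_{\cC}(\omega_{\cC}^{-1})^{\lotimes_{\cC}j}\bigr) \lotimes_{\cC} \omega_{\cC}^{-1} \cong E_{i+j} \lotimes_{\cC} \omega_{\cC}^{-1} \cong E_{i+j+1}.$$
Hence, for every $q \neq 0$ and every $j \geq 0$,
$$\Hom_{\cC}\bigl(E_i,\ E_i \lotimes_{\cC}(\omega_{\cC}^{-1})^{\lotimes_{\cC}j}[q]\bigr) \cong \Hom_{\cC}(E_i, E_{i+j}[q]) = 0,$$
where the vanishing is exactly the geometric condition of Definition~\ref{dfn.rh}(3) applied to the pair of indices $i \leq i+j$. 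This gives (RT3), so $E_i$ is a regular tilting object of $\cD^b(\cC)$.

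The only point that needs attention is the bookkeeping in the identification $E_i \lotimes_{\cC}(\omega_{\cC}^{-1})^{\lotimes_{\cC}j} \cong E_{i+j}$: one must check that the shift in (H2) points in the direction making the range $j \geq 0$ occurring in (RT3) fall into the range $i \leq i+j$ on which a geometric helix guarantees vanishing. Beyond matching up these indices I do not expect any genuine obstacle.
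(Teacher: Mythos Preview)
Your proof is correct and follows essentially the same approach as the paper's: both verify (RT1)--(RT3) directly from the defining properties of a full geometric relative helix of period $1$, with the key step being the identification $E_i\lotimes_{\cC}(\omega_{\cC}^{-1})^{\lotimes_{\cC}j}\cong E_{i+j}$ via (H2) so that the geometric vanishing condition yields (RT3). The only cosmetic difference is that the paper first records that $E_i$ is tilting (i.e.\ separates out the $j=0$ case of (RT3) together with (RE2)) before concluding regular tilting, whereas you treat all $j\geq 0$ uniformly; the content is identical.
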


\begin{proof}
By definition, $\gldim \End_{\cC}E_i<\infty$. Moreover we have
$\Hom_{\cC}(E_i, E_i[q])=0$ for all $q\neq 0$.
Since $\{E_i\}_{i\in \ZZ}$ is a full relative helix of period 1, $\langle E_i \rangle = \cD^b(\cC)$.
These say that $E_i$ is a tilting object of $\cD^b(\cC)$.
Since $\{E_i\}_{i\in \ZZ}$ is a geometric relative helix of period 1, we have
$$\Hom_{\cC}(E_i, E_i\lotimes _{\cC}(\omega _{\cC}^{-1})^{\lotimes _{\cC}j}[q]) =  \Hom_{\cC}(E_i, E_{i+j}[q]) =0$$
for all $q\neq 0$ and all $j\geq 0$, so $E_i$ is regular tilting.
\end{proof}

\begin{lemma} \label{lem.period1.2} 
Let $\cC$ be a $k$-linear abelian category having the canonical bimodule $\omega _{\cC}$.
If $E$ is a regular tilting object of $\cD^b(\cC)$, then $\{E\lotimes _{\cC}(\omega _{\cC}^{-1})^{\lotimes _{\cC}i} \}_{i\in\ZZ}$ is a full geometric relative helix of period 1 for $\cD^b(\cC)$.
\end{lemma}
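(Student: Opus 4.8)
The plan is to set $E_i:=E\lotimes_\cC(\omega_\cC^{-1})^{\lotimes_\cC i}$ for $i\in\ZZ$ and to verify the clauses of Definition~\ref{dfn.rh} one at a time with $\ell=1$, where for $i<0$ the symbol $(\omega_\cC^{-1})^{\lotimes_\cC i}$ is read via the inverse bimodule $\omega_\cC$. The single fact driving every step is that $\Phi:=-\lotimes_\cC\omega_\cC^{-1}$ is a $k$-linear (triangulated) autoequivalence of $\cD^b(\cC)$ --- indeed the invertible bimodule $\omega_\cC$ induces an exact autoequivalence $-\otimes_\cC\omega_\cC$ of $\cC$, which derives to an autoequivalence of $\cD^b(\cC)$ as already noted after Definition~\ref{dfn.cf} --- and that by construction $E_i=\Phi^i(E)$. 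Thus each $E_i$ is the image of $E$ under an autoequivalence, so every $\Hom$- and $\End$-computation among the $E_i$ can be normalized to one involving only $E$, to which the regular tilting hypotheses (RT1)--(RT3) apply.

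First I would dispatch (H2): associativity of the bimodule operation gives $E_{i+1}=E\lotimes_\cC(\omega_\cC^{-1})^{\lotimes_\cC(i+1)}\cong E_i\lotimes_\cC\omega_\cC^{-1}$, which is exactly (H2) for period $1$. Next, (H1) requires that the one-term sequence $\{E_i\}$ be a relative exceptional sequence in the sense of Definition~\ref{dfn.res}, so only (RE1) and (RE2) need checking, (RE3) being vacuous for a single object. For (RE1), the isomorphism $\End_\cC(E_i)=\End_{\cD^b(\cC)}(\Phi^iE)\cong\End_{\cD^b(\cC)}(E)=\End_\cC(E)$ gives $\gldim\End_\cC(E_i)=\gldim\End_\cC(E)<\infty$ by (RT1). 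For (RE2), applying $\Phi^{-i}$ yields $\Hom_\cC(E_i,E_i[q])\cong\Hom_\cC(E,E[q])$, which vanishes for $q\neq0$ by the $i=0$ case of (RT3).

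For fullness, I would apply $\Phi^i$ to (RT2): since $\Phi^i$ is a triangulated autoequivalence it carries the full triangulated subcategory $\langle E\rangle=\cD^b(\cC)$ onto $\langle\Phi^iE\rangle=\langle E_i\rangle$, so $\langle E_i\rangle=\cD^b(\cC)$. Finally, for the geometric condition, fix $i\le j$ and $q\neq0$; applying $\Phi^{-i}$ gives
$$\Hom_\cC(E_i,E_j[q])\cong\Hom_\cC\!\bigl(E,\Phi^{j-i}E[q]\bigr)=\Hom_\cC\!\bigl(E,E\lotimes_\cC(\omega_\cC^{-1})^{\lotimes_\cC(j-i)}[q]\bigr),$$
which is zero by (RT3) since $j-i\ge0$. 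Assembling these verifications shows that $\{E_i\}_{i\in\ZZ}$ is a full geometric relative helix of period $1$ for $\cD^b(\cC)$.

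I do not anticipate a genuine obstacle here; the argument is essentially the bookkeeping converse of Lemma~\ref{lem.period1.1}. The only points that deserve a line of care are confirming that $\Phi$ genuinely is an autoequivalence of the \emph{bounded derived} category (so that the normalizing translations by powers of $\Phi$ are legitimate) and that the exponent conventions behave for negative $i$; both are immediate consequences of the invertibility of $\omega_\cC$.
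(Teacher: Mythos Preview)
Your proof is correct and follows essentially the same approach as the paper: both arguments use that $-\lotimes_\cC\omega_\cC^{-1}$ is a triangulated autoequivalence to normalize all $\Hom$-computations back to $E$, then invoke (RT1)--(RT3). Your write-up is slightly more explicit (naming the autoequivalence $\Phi$, separating out (H2), noting (RE3) is vacuous), but the logical content is identical.
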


\begin{proof}
Clearly, $\gldim \End_{\cC}(E\lotimes _{\cC}(\omega _{\cC}^{-1})^{\lotimes _{\cC}i}) = \gldim \End_{\cC}(E) < \infty$. Moreover we have
$$\Hom_{\cC}(E\lotimes _{\cC}(\omega _{\cC}^{-1})^{\lotimes _{\cC}i}, E\lotimes _{\cC}(\omega _{\cC}^{-1})^{\lotimes _{\cC}i}[q])
\cong \Hom_{\cC}(E, E [q])=0$$
for every $q \neq0$. These mean that $\{E\lotimes _{\cC}(\omega _{\cC}^{-1})^{\lotimes _{\cC}i} \}_{i\in\ZZ}$ is a relative helix of period 1 for $\cD^b(\cC)$.
For every $q\neq 0$ and every $i\leq j$,
\[ \Hom_{\cC}(E\lotimes _{\cC}(\omega _{\cC}^{-1})^{\lotimes _{\cC}i}, E\lotimes _{\cC}(\omega _{\cC}^{-1})^{\lotimes _{\cC}j}[q])
\cong \Hom_{\cC}(E, E\lotimes _{\cC}(\omega _{\cC}^{-1})^{\lotimes_{\cC} (j-i)}[q])=0 \]
so $\{E\lotimes _{\cC}(\omega _{\cC}^{-1})^{\lotimes _{\cC}i} \}$ is geometric.
Since $\langle E\lotimes _{\cC}(\omega _{\cC}^{-1})^{\lotimes _{\cC}i} \rangle =\langle E \rangle = \cD^b(\cC)$, it follows that 
$\{E\lotimes _{\cC}(\omega _{\cC}^{-1})^{\lotimes _{\cC}i} \}$ is full.
\end{proof}

\begin{lemma} \label{lem.po} 
Let $(\cC, \cO, s)$ be an algebraic triple. 
For $r\in \NN^+$, $(\cO, s)$ is ample for $\cC$ if and only if $(\bigoplus _{i=0}^{r-1}s^i\cO, s^{r})$ is ample for $\cC$.  
\end{lemma}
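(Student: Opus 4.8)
The plan is to reduce everything to the two algebraic triples $(\cC, \cO, s)$ and $(\cC, \bigoplus_{i=0}^{r-1}s^i\cO, s^r)$ and to compare the two ampleness conditions (A1), (A2) directly. The key observation is that the family of objects $\{(s^r)^{-j}(\bigoplus_{i=0}^{r-1}s^i\cO)\}_{j\in\ZZ}$ and the family $\{s^{-n}\cO\}_{n\in\ZZ}$ generate, up to finite direct sums, the same set of objects: indeed $(s^r)^{-j}(\bigoplus_{i=0}^{r-1}s^i\cO)=\bigoplus_{i=0}^{r-1}s^{-rj+i}\cO$, and as $j$ ranges over $\ZZ$ and $i$ over $\{0,\dots,r-1\}$ the integers $-rj+i$ range over all of $\ZZ$; moreover $\{-rj+i : i=0,\dots,r-1,\ j\geq m'\}$ is cofinal in (i.e. eventually contains a tail of) $\{-n : n\geq m\}$ and vice versa, for suitable $m,m'$ related by $m'\approx m/r$. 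This is the same bookkeeping already used in Lemma \ref{lem.lr}, and it is the only thing that really happens here.

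First I would prove the direction ``$(\cO,s)$ ample $\Rightarrow$ $(\bigoplus_{i=0}^{r-1}s^i\cO, s^r)$ ample''. For (A1): given $\cM\in\cC$, ampleness of $(\cO,s)$ gives a surjection $\bigoplus_{j=1}^p s^{-n_j}\cO\to\cM$ with $n_j\geq 0$. Each summand $s^{-n}\cO$ with $n\geq 0$ is a direct summand of $(s^r)^{-\lceil n/r\rceil}(\bigoplus_{i=0}^{r-1}s^i\cO)=\bigoplus_{i=0}^{r-1}s^{-r\lceil n/r\rceil+i}\cO$ (since $0\le r\lceil n/r\rceil-n\le r-1$), so replacing each $s^{-n_j}\cO$ by the corresponding $(s^r)^{-a_j}(\bigoplus_{i=0}^{r-1}s^i\cO)$ with $a_j\ge 0$ and adding a split surjection onto the extra summands yields the required surjection. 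For (A2): given a surjection $\phi:\cM\to\cN$, ampleness of $(\cO,s)$ yields $m$ such that $\Hom_\cC(s^{-n}\cO,\phi)$ is surjective for all $n\geq m$; then for $j\geq \lceil m/r\rceil$ we have $-rj+i\le -m$... — more precisely, for $j\geq\lceil m/r\rceil$ and every $i\in\{0,\dots,r-1\}$ we have $rj-i\ge m$, hence
$$\Hom_\cC((s^r)^{-j}(\textstyle\bigoplus_{i=0}^{r-1}s^i\cO),\phi)=\bigoplus_{i=0}^{r-1}\Hom_\cC(s^{-(rj-i)}\cO,\phi)$$
is a finite direct sum of surjections, hence surjective. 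So $(\bigoplus_{i=0}^{r-1}s^i\cO,s^r)$ satisfies (A1), (A2).

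The converse direction ``$(\bigoplus_{i=0}^{r-1}s^i\cO, s^r)$ ample $\Rightarrow$ $(\cO,s)$ ample'' goes the same way in reverse, using that conversely $\bigoplus_{i=0}^{r-1}s^i\cO$ itself, and each of its shifts $(s^r)^{-j}(\bigoplus_{i=0}^{r-1}s^i\cO)$, is visibly built from the $s^{-n}\cO$'s: for (A1), a surjection from $\bigoplus_{k=1}^p (s^r)^{-j_k}(\bigoplus_{i=0}^{r-1}s^i\cO)$ onto $\cM$ is already a surjection from a finite sum of the form $\bigoplus s^{-n}\cO$ with $n\ge 0$ (provided all $j_k\ge 0$, which we may assume after increasing them, using that $s^r(\bigoplus_{i=0}^{r-1}s^i\cO)$ surjects onto... — rather, simply note we may take the $i_j$ in (A1) arbitrarily, and enlarging the index only helps); and for (A2), surjectivity of $\Hom_\cC((s^r)^{-j}(\bigoplus_{i=0}^{r-1}s^i\cO),\phi)$ for $j\ge m'$, together with the direct-sum decomposition above, forces surjectivity of each $\Hom_\cC(s^{-(rj-i)}\cO,\phi)$, and the integers $rj-i$ ($j\ge m'$, $0\le i\le r-1$) exhaust all sufficiently large integers, giving (A2) for $(\cO,s)$ with $m=rm'$.

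The main obstacle is purely organizational rather than conceptual: one must be slightly careful that in step ``$\Rightarrow$'' of (A1) the shift exponents $a_j=\lceil n_j/r\rceil$ are genuinely $\geq 0$ (true since $n_j\ge 0$) and that adjoining the complementary summands $s^{-r a_j+i}\cO$ ($i\ne$ the relevant residue) does not destroy surjectivity — which it does not, since one can always extend a surjection $X\to\cM$ to a surjection $X\oplus Y\to\cM$ by the zero map on $Y$. There are no homological inputs needed; everything is formal manipulation of finite direct sums in the abelian category $\cC$ together with the arithmetic of the residue classes mod $r$. No step should require more than the elementary facts recorded above, so I would present the ``$\Rightarrow$'' direction in full and remark that ``$\Leftarrow$'' is proved by the same cofinality argument read backwards.
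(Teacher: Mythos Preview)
Your approach is essentially identical to the paper's: both proofs rest on the decomposition $(s^r)^{-j}\bigl(\bigoplus_{i=0}^{r-1}s^i\cO\bigr)\cong\bigoplus_{i=0}^{r-1}s^{i-rj}\cO$ and the observation that $\Hom_{\cC}(\bigoplus_i s^{i-rj}\cO,\phi)$ is surjective iff each $\Hom_{\cC}(s^{i-rj}\cO,\phi)$ is, with (A1) handled by the direct-summand/extend-by-zero trick you describe. Two small cosmetic fixes: the bound in your (A2) step should be $j\ge \lceil (m+r-1)/r\rceil$ rather than $\lceil m/r\rceil$ to guarantee $rj-i\ge m$ for all $i\le r-1$; and in the (A1) step of the converse, the cleanest way to force nonpositive exponents is to apply the hypothesis to $s^{r-1}\cM$ and then shift back by $s^{-(r-1)}$, rather than the slightly muddled ``enlarging the index'' remark.
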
 

\begin{proof} 
Let $I_j=\{i\in \ZZ\mid jr\leq i\leq (j+1)r-1\}$ so that $(s^r)^j(\bigoplus _{i\in I_0}s^i\cO)\cong \bigoplus _{i\in I_j}s^i\cO$.  
Clearly, (A1) for the pair $(\cO, s)$ is equivalent to (A1) for the pair $(\bigoplus _{i\in I_0}s^i\cO, s^r)$.

For every epimorphism $\phi:\cM\to \cN$ in $\cC$, we see that
$$\Hom_{\cC}(\bigoplus _{i\in I_j}s^i\cO, \phi):\Hom_{\cC}(\bigoplus _{i\in I_j}s^i\cO, \cM)\to \Hom_{\cC}(\bigoplus _{i\in I_j}s^i\cO, \cN)$$
is surjective
if and only if $\Hom_{\cC}(s^i\cO, \phi):\Hom_{\cC}(s^i\cO, \cM)\to \Hom_{\cC}(s^i\cO, \cN)$ is surjective for every $i \in I_j$.
This implies that (A2) for $(\cO, s)$ is equivalent to (A2) for $(\bigoplus _{i\in I_0}s^i\cO, s^r)$.
\end{proof} 

\begin{proposition} \label{prop.am}
Let $\cC$ be a $k$-linear abelian category having the canonical bimodule $\omega _{\cC}$, and $(\cO, s)$ an algebraic pair for $\cC$.  If $\{s^i\cO\}_{i\in \ZZ}$ is a full geometric relative helix of period $\ell$ for $\cD^b(\cC)$, then the following hold. 
\begin{enumerate}
\item{} $T:=\bigoplus _{i=0}^{\ell-1}s^i\cO\in \cC$ is a regular tilting object for $\cD^b(\cC)$.  
\item{} 
$(\cC, T, -\otimes _{\cC}\omega_{\cC}^{-1})\sim (\cC, T, s^{\ell})$.  
\item{} $(\cO, s)$ is ample for $\cC$ if and only if $(T, -\otimes _{\cC}\omega _{\cC}^{-1})$ is ample for $\cC$. 
\end{enumerate}
\end{proposition}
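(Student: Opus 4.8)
The plan is to derive all three parts by feeding the Veronese parameter $r=\ell$ into the lemmas already proved.

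For (1), I would apply Lemma \ref{lem.lr} with $r=\ell$: since $r\mid \ell$, the sequence
$\{s^{j\ell}(\bigoplus_{i=0}^{\ell-1}s^i\cO)\}_{j\in\ZZ}=\{s^{j\ell}T\}_{j\in\ZZ}$ is a full geometric relative helix of period $\ell/\ell=1$ for $\cD^b(\cC)$. Lemma \ref{lem.period1.1} then immediately gives that each term is a regular tilting object of $\cD^b(\cC)$; taking $j=0$ shows $T$ is regular tilting. (That $T$ lies in $\cC$, not merely $\cD^b(\cC)$, is clear since it is a finite direct sum of the objects $s^i\cO\in\cC$.)

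For (2), I would use the helix relation (H2) directly. Being a relative helix of period $\ell$, the sequence satisfies $s^{i+\ell}\cO\cong s^i\cO\lotimes_{\cC}\omega_{\cC}^{-1}$ for every $i\in\ZZ$. Since $\omega_{\cC}$ is an invertible bimodule, $-\otimes_{\cC}\omega_{\cC}^{-1}$ is an exact autoequivalence of $\cC$, so on objects of $\cC$ it agrees with the derived functor $-\lotimes_{\cC}\omega_{\cC}^{-1}$ on $\cD^b(\cC)$; hence
$$T\otimes_{\cC}\omega_{\cC}^{-1}\cong\bigoplus_{i=0}^{\ell-1}\bigl(s^i\cO\lotimes_{\cC}\omega_{\cC}^{-1}\bigr)\cong\bigoplus_{i=0}^{\ell-1}s^{i+\ell}\cO=s^{\ell}T$$
as objects of $\cC$. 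Iterating this (and using $s^{i-\ell}\cO\cong s^i\cO\lotimes_{\cC}\omega_{\cC}$ for the negative powers) yields $(-\otimes_{\cC}\omega_{\cC}^{-1})^{j}T\cong(s^{\ell})^{j}T$ for all $j\in\ZZ$, so the identity functor $\id_{\cC}$ witnesses the equivalence of algebraic triples $(\cC,T,-\otimes_{\cC}\omega_{\cC}^{-1})\sim(\cC,T,s^{\ell})$.

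For (3), Lemma \ref{lem.po} with $r=\ell$ gives that $(\cO,s)$ is ample for $\cC$ if and only if $(T,s^{\ell})=(\bigoplus_{i=0}^{\ell-1}s^i\cO,s^{\ell})$ is ample for $\cC$; and by part (2) together with Lemma \ref{lem.equi}(2), $(T,s^{\ell})$ is ample for $\cC$ if and only if $(T,-\otimes_{\cC}\omega_{\cC}^{-1})$ is ample for $\cC$. Chaining the two equivalences finishes (3). The whole argument is a bookkeeping assembly of the preceding lemmas; the only point needing a bit of care is the compatibility invoked in (2), namely that applying the exact autoequivalence $-\otimes_{\cC}\omega_{\cC}^{-1}$ to an object of $\cC$ computes the same object as the derived functor $-\lotimes_{\cC}\omega_{\cC}^{-1}$, so that the relation (H2)—a priori an isomorphism in $\cD^b(\cC)$—can be read inside $\cC$. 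I do not anticipate any genuine obstacle beyond this.
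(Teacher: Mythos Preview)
Your proof is correct and follows essentially the same route as the paper: both apply Lemma~\ref{lem.lr} with $r=\ell$ followed by Lemma~\ref{lem.period1.1} for (1), verify $(s^\ell)^jT\cong T\lotimes_{\cC}(\omega_{\cC}^{-1})^{\lotimes_{\cC}j}$ via iteration of (H2) for (2), and chain Lemma~\ref{lem.po} with Lemma~\ref{lem.equi}(2) for (3). Your explicit remark about the compatibility between $-\otimes_{\cC}\omega_{\cC}^{-1}$ and $-\lotimes_{\cC}\omega_{\cC}^{-1}$ on objects of $\cC$ is a welcome clarification that the paper leaves implicit.
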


\begin{proof} (1) If $\{s^i\cO\}_{i\in \ZZ}$ is a full geometric relative helix of period $\ell$, then 
$\{s^{j\ell}(\bigoplus _{i=0}^{\ell-1}s^i\cO)\}_{j\in \ZZ}$ is a full geometric relative helix of period 1 by Lemma \ref{lem.lr}, so 
$T:=\bigoplus _{i=0}^{\ell-1}s^i\cO
\in \cC$ is a regular tilting object for $\cD^b(\cC)$ by Lemma \ref{lem.period1.1}. 

(2) Since 
$$(s^{\ell})^jT=(s^{\ell})^j(\bigoplus _{i=0}^{\ell-1}s^i\cO)\cong \bigoplus _{i=0}^{\ell-1}s^{i+j\ell}\cO
\cong (\bigoplus _{i=0}^{\ell-1}s^i\cO) \lotimes _{\cC}(\omega _{\cC}^{-1})^{\lotimes _{\cC}j}= T\lotimes _{\cC}(\omega _{\cC}^{-1})^{\lotimes _{\cC}j}$$ 
for every $j\in \ZZ$, it follows that 
$(\cC, T, -\otimes _{\cC}\omega_{\cC}^{-1})\sim (\cC, T, s^{\ell})$.

(3) By Lemma \ref{lem.equi} (2), $(T, -\otimes _{\cC}\omega_{\cC}^{-1})$ is ample for $\cC$ if and only if $(T, s^{\ell})$ is ample for $\cC$.
By Lemma \ref{lem.po}, $(T, s^{\ell})=(\bigoplus _{i=0}^{\ell-1}s^i\cO, s^{\ell})$ is ample for $\cC$ if and only if $(\cO, s)$ is ample for $\cC$. 
\end{proof}

\section{Main Result} 

We are now ready to state and prove the main result of this paper, which gives a complete answer to Question \ref{ques.intro}.  Note that if $A$ is an AS-regular algebra over $A_0$ of dimension 0, then $A$ is finite dimensional over $k$, so $\tails A$ is trivial.  

\begin{theorem} \label{thm.main}
Let 
$\cC$ be a $k$-linear abelian category.
Then $\cC\cong \tails A$ for some graded right coherent AS-regular algebra over $A_0$ of dimension at least 1 and of Gorenstein parameter $\ell$ if and only if
\begin{enumerate}
\item[(AS1)] $\cC$ has a canonical bimodule $\omega_{\cC}$, and
\item[(AS2)] there exists an ample algebraic pair $(\cO, s)$ for $\cC$ such that $\{s^i\cO\}_{i\in \ZZ}$ is a full geometric relative helix of period $\ell$ for $\cD^b(\cC)$. 
\end{enumerate}

In fact, if (AS1) and (AS2) are satisfied, then $A=B(\cC, \cO, s)_{\geq 0}$ is a graded right coherent AS-regular algebra over $A_0=\End _{\cC}(\cO)$ of dimension $\gldim \cC+1$ and of Gorenstein parameter $\ell$ such that $\cC\cong \tails A$.

In this case, $A$ is right noetherian if and only if $\cO\in \cC$ is a noetherian object. 
\end{theorem}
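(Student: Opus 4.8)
The plan is to prove the two implications separately. The ``only if'' direction will read off (AS1)--(AS2) from the structure theory of AS-regular algebras over $R$; the ``if'' direction will construct $A=B(\cC,\cO,s)_{\ge0}$, identify it as the required algebra (which also gives the ``In fact'' clause), and the noetherian statement will then follow from the isomorphism of algebraic triples produced along the way. The hard part will be the ``if'' direction: transporting AS-regularity through the quasi-Veronese equivalence, a Zhang twisting, and the preprojective algebra $\Pi R$, while keeping track of the exact Gorenstein parameter.

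\emph{The ``if'' direction.} Assume (AS1) and (AS2). By Remark \ref{rem.cms}(2), $\cD^b(\cC)$ and hence $\cC$ is $\Hom$-finite, so Theorem \ref{thm.AZ}(2) applied to the ample pair $(\cO,s)$ gives that $A:=B(\cC,\cO,s)_{\ge0}$ is graded right coherent, that $\uH^0(\cM)_{\ge n}$ is always graded right coherent, and that $\cM\mapsto\pi\uH^0(\cM)_{\ge0}$ is an isomorphism $(\cC,\cO,s)\cong(\tails A,\cA,(1))$; in particular $\cC\cong\tails A$, $A_0=\End_\cC(\cO)$, and $\gldim A_0<\infty$ by (RE1) of the helix. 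To identify $A$ as AS-regular over $A_0$ of dimension $\gldim\cC+1$ and Gorenstein parameter $\ell$, put $T:=\bigoplus_{i=0}^{\ell-1}s^i\cO$. By Proposition \ref{prop.am}, $T$ is regular tilting for $\cD^b(\cC)$ and $(T,-\otimes_\cC\omega_\cC^{-1})$ is ample, so Theorem \ref{thm.rtilt2} produces a Fano algebra $R=\End_\cC(T)$ with $\gldim R=\gldim\cC$ together with a graded right coherent AS-regular algebra $\Pi R\cong B(\cC,T,-\otimes_\cC\omega_\cC^{-1})_{\ge0}$ over $R$ of dimension $\gldim\cC+1$ and Gorenstein parameter $1$. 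Three identifications finish this: (i) each helix window is a relative exceptional sequence, so $\Hom_\cC(s^i\cO,s^j\cO)=0$ for $0\le j<i\le\ell-1$ and hence $R=\End_\cC(T)=\nabla A$; (ii) by the quasi-Veronese description $B(\cC,\bigoplus_{i=0}^{\ell-1}s^i\cO,s^\ell)\cong B(\cC,\cO,s)^{[\ell]}$ (the Example and \cite{Mbc}), together with (i), one gets $B(\cC,T,s^\ell)_{\ge0}\cong A^{[\ell]}$; (iii) since $(\cC,T,-\otimes_\cC\omega_\cC^{-1})\sim(\cC,T,s^\ell)$ by Proposition \ref{prop.am}(2), Lemma \ref{lem.equi}(1) shows $A^{[\ell]}$ is a twisted graded algebra of $\Pi R$. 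As twisting preserves $\GrMod$, hence graded coherence, $\gldim$, and the defining $\RuGa$-condition of ASF-regularity, $A^{[\ell]}$ is graded right coherent and ASF-regular of dimension $\gldim\cC+1$ and Gorenstein parameter $1$. Finally the equivalence $Q\colon\GrMod A\cong\GrMod A^{[\ell]}$ carries the torsion functor $\uGa_\fm$ to $\uGa_\fm$ and sends $A(\ell)$ to $A^{[\ell]}(1)$, so it transports this back to: $A$ is ASF-regular of dimension $\gldim\cC+1$ and Gorenstein parameter $\ell$. Since $A$ is graded right coherent it satisfies (EF) (Proposition \ref{prop.coh}), so Theorem \ref{thm.com} promotes this to AS-regularity of $A$ over $A_0$, which completes this direction together with the ``In fact'' clause.

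\emph{The ``only if'' direction.} Suppose $\cC\cong\tails A$ with $A$ graded right coherent AS-regular over $A_0$ of dimension $d=\gldim\cC+1\ge1$ and Gorenstein parameter $\ell$. Then (AS1) is exactly Example \ref{ex.cf}(3): $\omega_\cA=\pi\omega_A$ is a canonical bimodule for $\tails A$. For (AS2) I would take $(\cO,s)=(\cA,(1))$: ampleness is Theorem \ref{thm.AZ}(1), using that $A$ is graded right coherent and, having finite cohomological dimension, satisfies the coherence hypothesis on $\uH^0(\cM)_{\ge n}$ (cf. \cite{MM}). To verify that $\{\cA(i)\}_{i\in\ZZ}$ is a full geometric relative helix of period $\ell$: by Theorem \ref{thm.ASF}, $A$ is ASF-regular, so $\RuGa(A)\cong DA(\ell)[-d]$ with $D\RuGa(A)\cong A_\nu(-\ell)[d]$; the standard local-cohomology exact sequences ($\uH^q(\cA)\cong\uH^{q+1}_\fm(A)$ for $q\ge1$, and the four-term sequence for $q=0$) then give $\uH^0(\cA)=A$, $\uH^q(\cA)=0$ for $1\le q\le d-2$, and $\uH^{d-1}(\cA)$ concentrated in degrees $\le-\ell$. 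Since $\Ext^q_\cC(\cA,\cA(n))=\uH^q(\cA)_n$ and $A$ is $\NN$-graded, conditions (RE1)--(RE3) on every window $\{\cA(i),\dots,\cA(i+\ell-1)\}$ and the geometric condition drop out; (H2) holds because $-\otimes_\cC\omega_\cC^{-1}$ acts as $(-)_{\nu^{-1}}(\ell)$ on $\tails A$ and $A_{\nu^{-1}}\cong A$ as graded right modules (twisting the regular module by an automorphism is harmless), so $\cA(i)\otimes_\cC\omega_\cC^{-1}\cong\cA(i+\ell)$; and fullness of each window follows from $\cD^b(\cC)\cong\cD^b(\mod\nabla A)$ (via $\grmod\Pi\nabla A\cong\grmod A$ of Theorem \ref{thm.MM}), under which $\{\cA,\dots,\cA(\ell-1)\}$ corresponds to a full exceptional collection.

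Finally, for the noetherian addendum: under $(\cC,\cO,s)\cong(\tails A,\cA,(1))$, the object $\cO$ is noetherian in $\cC$ if and only if $\cA$ is noetherian in $\tails A$, and for a graded right coherent algebra equipped with an ample pair the latter is equivalent to $A$ being right noetherian, by \cite{P} (cf. \cite{AZ}). As flagged above, the technical heart is step (ii)--(iii) and the $Q$-transport, i.e.\ checking that all the defining data of ``AS-regular over $R$'' — and in particular the passage of the Gorenstein parameter between $1$ and $\ell$ — genuinely survive the quasi-Veronese equivalence and the Zhang twisting.
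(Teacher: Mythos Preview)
Your overall architecture matches the paper's: in both directions you route through $T=\bigoplus_{i=0}^{\ell-1}s^i\cO$, Proposition~\ref{prop.am}, Theorem~\ref{thm.rtilt2}, and the Fano/preprojective machinery. The divergence is precisely at the step you flag as the ``technical heart'': how to conclude that $A=B(\cC,\cO,s)_{\ge0}$ itself is AS-regular over $A_0$ with Gorenstein parameter $\ell$.

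You propose to \emph{transport} ASF-regularity from $\Pi R$ to $A^{[\ell]}$ (via Zhang twist) and then from $A^{[\ell]}$ to $A$ (via the quasi-Veronese equivalence $Q$). The paper does \emph{not} do this; it only uses $\GrMod A\cong\GrMod\Pi R$ to read off $\gldim A$, and then verifies ASF-regularity of $A$ \emph{directly}. Concretely, it computes $\uH^q(\cA)=\bigoplus_j\Ext^q_\cC(\cO,s^j\cO)$ degree-by-degree from the helix axioms (geometric, relative exceptional windows) together with Serre duality, obtaining $\uH^q(\cA)=0$ for $q\neq 0,n$ and $\uH^0(\cA)=A$ (so $\phi_A$ is an isomorphism); it then checks, using the bifunctoriality of the Serre functor spelled out in Remark~\ref{rem.Se}, that the induced isomorphism $\uH^n(\cA)\cong DA(\ell)$ is compatible with both the right and the left $A$-actions. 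Feeding this into the local-cohomology exact sequence gives ASF-regularity, and Theorem~\ref{thm.com} finishes. The paper also treats the case $\gldim\cC=0$ separately (there $R$ is semisimple and $A$ is shown to be a twist of $A_0[x]$ with $\deg x=\ell$), which your argument does not isolate.

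Your transport step, as written, has a genuine gap. Under $Q$, the right $A$-module $A$ is sent to $e_1A^{[\ell]}$ (a single column), not to $A^{[\ell]}$, so the statement ``$Q$ sends $A(\ell)$ to $A^{[\ell]}(1)$'' is only true after summing over all shifts $A(j)$, $0\le j\le\ell-1$; extracting the correct summand on both sides of $\RuGa(A^{[\ell]})\cong D(A^{[\ell]})(1)[-d]$ and matching it with $\RuGa(A)\cong DA(\ell)[-d]$ requires tracking how the idempotent $e_1$ interacts with $D$ and with the left-module condition, which you do not do. More seriously, Example~\ref{ex.qvas} in the paper shows that AS-regularity is \emph{not} a graded Morita invariant along $\GrMod A\cong\GrMod A^{[r]}$ in general; the implication you need goes in the opposite direction and uses $r=\ell$ essentially, so it cannot be dismissed as formal. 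The Zhang-twist step is safer (local cohomology and $D$ do behave well under twisting systems), but you still owe the reader the left-module side, since Lemma~\ref{lem.equi}(1) only gives a twist by a twisting system, not by an automorphism. In short: your route is plausible but the paper's direct computation is what actually closes the argument, and it is precisely the content you have deferred.

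For the ``only if'' direction your approach is close in spirit but differs in the ampleness step: the paper does not invoke Theorem~\ref{thm.AZ}(1) (which would require verifying the coherence of $\uH^0(\cM)_{\ge n}$ for arbitrary $\cM$), but instead transports ampleness back from $(\cH^{\omega_R^{-1}},R,-\lotimes_R\omega_R^{-1})$ via the chain of equivalences and Lemma~\ref{lem.equi}(2), Lemma~\ref{lem.po}. Your local-cohomology verification of the helix axioms is fine and essentially equivalent to the paper's appeal to \cite[Proposition~4.4]{MM}.
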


\begin{proof}  
$(\Rightarrow )$
Let $A$ be a graded right coherent AS-regular algebra over $A_0$ of dimension $d \geq 1$ and of Gorenstein parameter $\ell$. 
Then $\tails A$ has the canonical bimodule $\omega _{\cA}$ by Example \ref{ex.cf}.
By Theorem \ref{thm.MM}, $R:=\nabla A$ is a Fano algebra of $\gldim R=d-1$, and 
$B:=\Pi R$ is a graded right coherent AS-regular algebra over $R$ of dimension $d$ and of Gorenstein parameter 1.  
Since $B$ is a twisted graded algebra of $A^{[\ell]}$ by a graded algebra automorphism 
by \cite [Theorem 4.12]{MM}, there exists an equivalence functor $\grmod A\to \grmod B$ sending
$\bigoplus _{i=0}^{\ell-1}A(i)$ to $B$ by \cite [Remark 4.9]{MM}.
Since we have $\cA(i)\otimes _{\cA}\omega _{\cA}^{-1}\cong \cA(i+\ell)$ for every $i\in \ZZ$, it follows that
\begin{align*}
(\tails A, \bigoplus _{i=0}^{\ell-1}\cA(i), (\ell))
\sim &(\tails A, \bigoplus _{i=0}^{\ell-1}\cA(i), -\otimes _{\cA}\omega_{\cA}^{-1}) \\
\cong &(\tails B, \cB, -\otimes _{\cB}\omega _{\cB}^{-1})\\
\cong &(\cH^{\omega _R^{-1}}, R, -\lotimes _R\omega _R^{-1})
\end{align*} 
by \cite [Corollary 3.12]{Min}.
Since $(R, -\lotimes _{\cC}\omega _R^{-1})$ is ample for $\cH^{\omega _R^{-1}}$ by \cite[Lemma 3.5]{Min}, 
$(\bigoplus _{i=0}^{\ell-1}\cA(i), (\ell))$ is ample for $\tails A$ by Lemma \ref{lem.equi} (2), so $(\cA, (1))$ is ample for $\tails A$ by Lemma \ref{lem.po}.

We next show that $\{\cA(i)\}_{i\in\ZZ}$ is a full geometric relative helix of period $\ell$ for $\cD^b(\tails A)$.
By \cite[Proposition 4.4]{MM}, 
$\End_{\cA}(\cA(i)) \cong \End_{\cA}(\cA) \cong A_0$,
so $\gldim \End_{\cA}(\cA(i)) < \infty$.
Since $\cA(i)\otimes_{\cA}\omega_{\cA}^{-1} \cong \cA(i+\ell)$,
it follows from \cite[Proposition 4.4]{MM} again that $\{\cA(i)\}_{i\in \ZZ}$ is a geometric relative helix of period $\ell$.
Furthermore, similar to the proof of \cite[Proposition 4.3]{MM}, we have 
$\langle \cA(i),\dots, \cA(i+\ell-1) \rangle = \cD^b(\tails A)$ for every $i\in \ZZ$,
so $\{\cA(i)\}_{i\in \ZZ}$ is a full relative helix.

Since $\cC \cong \tails A$, we see that $\cC$ has an ample algebraic pair
$(\cO, s)$ such that $\{s^i\cO \}_{i\in \ZZ}$ is a full geometric relative helix of period $\ell$ for $\cD^b(\cC)$.

$(\Leftarrow )$
Suppose that $\cC$ satisfies (AS1) and (AS2).  Let $n = \gldim \cC$.
Since $(\cO, s)$ is ample for $\cC$, $A:=B(\cC, \cO, s)_{\geq 0}$ is graded right coherent and
$(\cC, \cO, s)\cong (\tails A, \cA, (1))$ by Theorem \ref{thm.AZ}.  
By Proposition \ref{prop.am}, $T:=\bigoplus _{i=0}^{\ell-1}s^i\cO\in \cC$ is a regular tilting object for $\cD^b(\cC)$ and $(T, -\otimes _{\cC}\omega _{\cC}^{-1})$ is ample for $\cC$,
so it follows from Theorem \ref{thm.rtilt2} that $\Pi R\cong  B(\cC, T , -\otimes _{\cC}\omega_{\cC}^{-1})_{\geq 0}$ is a graded right coherent AS-regular algebra over $R:= \End_{\cC}(T)$ of dimension $n +1$.
Moreover, since $\{s^{-\ell+1}\cO, \dots, s^{-1}\cO, \cO\}$ is a relative exceptional sequence,
\begin{align*}
A^{[\ell]} & =(B(\cC, \cO, s)_{\geq 0})^{[\ell]} \cong (B(\cC, \cO, s)^{[\ell]})_{\geq 0}  \cong B(\cC,\bigoplus _{i=0}^{\ell-1}s^i\cO, s^\ell )_{\geq 0}=B(\cC,  T, s^\ell )_{\geq 0}
\end{align*}
by \cite[Lemma 3.8]{Mbc}.
Since $(\cC, T, -\otimes _{\cC}\omega_{\cC}^{-1})\sim (\cC,  T, s^\ell )$ by Proposition \ref{prop.am},
we see that $\GrMod A \cong \GrMod A^{[\ell]} \cong \GrMod \Pi R$ by Lemma \ref{lem.equi} (1),
so we have $\gldim A=\gldim \Pi R=n +1$.  

For the rest, we will show that $A$ is AS-regular over $A_0=\End _{\cC}(\cO)$ of dimension $n+1$ and of Gorenstein parameter $\ell$.  

First assume $n=0$.
It follows from Theorem \ref{thm.rtilt2} that $\gldim R=\gldim \cC=0$, so $R$ is semisimple.
Since $\cD^b(\cC)\cong \cD^b(\mod R)$, we have $-\otimes _{\cC}\omega _{\cC}=\id_{\cC}$, so $s^{j\ell+i}\cO\cong s^i\cO$ for every $i, j\in \ZZ$.
It follows that $(\cC, \cO, s^{\ell})\sim (\cC, \cO, \id_{\cC})$, so $A^{(\ell)}\cong B(\cC, \cO, s^{\ell})_{\geq 0}$ is a twisted graded algebra of $B(\cC, \cO, \id_{\cC})_{\geq 0}\cong A_0[x]$ where $\deg x=1$ by Lemma \ref{lem.equi} (1).  Since $\{\cO, \dots, s^{\ell-1}\cO\}$ is a relative exceptional sequence, $\Hom_{\cC}(\cO, s^i\cO)\cong D\Hom_{\cC}(s^i\cO, \cO)=0$ for every $0<i<\ell$, so 
$$\Hom_{\cC}(\cO, s^i\cO)\cong \begin{cases} \End _{\cC}(\cO)=A_0 & \textnormal { if } i\in \ell \ZZ \\
0 & \textnormal { if } i\not \in \ell\ZZ.\end{cases}$$
It follows that $A=B(\cC, \cO, s)_{\geq 0}$ is a twisted graded algebra of $A_0[x]$ where $A_0$ is semisimple and $\deg x=\ell$, so $A$ is AS-regular over $A_0$ of dimension 1 and of Gorenstein parameter $\ell$.

We now assume $n \geq 1$.
Since $A$ is graded right coherent, it satisfies the condition (EF),
so it is enough to show that $A$ is ASF-regular of dimension $n+1$ and of Gorenstein parameter $\ell$ by Theorem \ref{thm.com}.
Note that we have an exact sequence 
$$\begin{CD} 
0 \to \uH_{\fm}^0(A) \to A @>\phi_A >> \uH^0(\cA) \to \uH_{\fm}^1(A) \to 0 \\
\end{CD}$$
and isomorphisms
$$\uH_{\fm}^q(A) \cong \uH^{q-1}(\cA), \; \;  q\geq 2$$
of graded $A$-$A$ bimodules where 
$\phi_A:A\to \uH^0(\cA)$ is the graded algebra homomorphism defined in Example \ref{ex.phi}. 
Thus it is enough to check that $\phi_A$ above is an isomorphism and 
$$\uH^q(\cA)\cong \begin{cases} 
0 & \textnormal { if } q \neq 0, n \\
DA(\ell) & \textnormal { if } q=n 
\end{cases}$$ 
as graded right and left $A$-modules. 

If $j\geq 0$, then $\Ext^q_{\cC}(\cO, s^j\cO)=0$ for all $q\neq 0$ since $\{s^j\cO\}$ is geometric. 
If $-\ell< j< 0$, then $\Ext^q_{\cC}(\cO, s^j\cO)=0$ since $\{s^{-\ell+1}\cO, \dots, s^{-1}\cO, \cO\}$ is a relative exceptional sequence.  
If $j\leq -\ell$, then $\Ext^q_{\cC}(\cO, s^j\cO)\cong D\Ext^{n-q}_{\cC}(s^j\cO, s^{-\ell}\cO)\cong D\Ext^{n-q}_{\cC}(\cO, s^{-\ell-j}\cO)=0$ for all $q\neq n$ since $\{s^j\cO\}$ is geometric. It follows that $\uH^q(\cA) \cong \bigoplus _{j\in \ZZ}\Ext^q_{\cC}(\cO, s^j\cO)=0$ for all $q\neq 0, n$.  On the other hand, if $-\ell< j< 0$, then $\Hom_{\cC}(\cO, s^j\cO)=0$ since $\{s^{-\ell+1}\cO, \dots, s^{-1}\cO, \cO\}$ is a relative exceptional sequence.  
If $j\leq -\ell$, then $\Hom_{\cC}(\cO, s^j\cO)\cong D\Ext^{n}_{\cC}(s^j\cO, s^{-\ell}\cO)\cong D\Ext^{n}_{\cC}(\cO, s^{-\ell-j}\cO)=0$
since $\{s^j\cO\}$ is geometric.
Thus $A = B(\cC, \cO, s)$.

Recall that the functor $\pi :\grmod A\to\tails A$ induces a morphism of algebraic triples $(\grmod A, A, (1))\to (\tails A, \cA, (1))$ by Example \ref{ex.phi}.  Since $\cC$ is $\Hom$-finite and $(\cO, s)$ is ample for $\cC$, we have a functor $\uH^0(-)_{\geq 0}:\cC\to \grmod A$ by Theorem \ref{thm.AZ} (2).  Since 
$\uH^0(-)_{\geq 0}\circ s=\bigoplus _{i=0}^{\infty}\Hom_{\cC}(\cO, s^{i+1}(-))$ and $(1)\circ \uH^0(-)_{\geq 0}=\bigoplus _{i=-1}^{\infty}\Hom_{\cC}(\cO, s^{i+1}(-))$,
there exists a natural transformation $\uH^0(-)_{\geq 0}\circ s\to (1)\circ \uH^0(-)_{\geq 0}$.  Since $\uH^0(\cO)_{\geq 0}=B(\cC, \cO, s)_{\geq 0}=A$, the functor $\uH^0(-)_{\geq 0}:\cC\to \grmod A$ induces a morphism of algebraic triples $(\cC, \cO, s)\to (\grmod A, A, (1))$.
By Theorem \ref{thm.AZ} (2), the composition of these morphisms is an isomorphism of algebraic triples $(\cC, \cO, s)\to (\tails A, \cA, (1))$.  In the commutative diagram 
$$\begin{CD}
\psi: & B(\cC, \cO, s) @>\uH^0(-)_{\geq 0}>> B(\grmod A, A, (1)) @>\pi >> B(\tails A, \cA, (1)) \\
& \parallel & & \parallel & & \parallel  \\
\phi _A: & A @>>> \uH^0(A) @>>> \uH^0(\cA),
\end{CD}$$
$\psi$ is an isomorphism of a graded algebras by Theorem \ref{thm.AZ} (2), so $\phi_A$ is also an isomorphism of graded algebras.

Consider the diagram 
$$\begin{CD}
\uH^n(\cA)_i\times A_j @>>> \uH^n(\cA)_{i+j} \\
@V\cong VV @VV\cong V \\
\Hom_{\cC}(s^j\cO, s^{i+j}\cO[n])\times \Hom_{\cC}(\cO, s^j\cO) @>\Phi >> \Hom_{\cC}(\cO, s^{i+j}\cO[n]) \\
@VF\times \id VV @VVF V \\
D\Hom_{\cC}(s^{i+j+\ell}\cO, s^j\cO)\times \Hom_{\cC}(\cO, s^j\cO) @>\Psi >> D\Hom_{\cC}(s^{i+j+\ell}\cO, \cO) \\
@V\cong VV @VV\cong V \\
DA(\ell)_i\times A_j @>>> DA(\ell)_{i+j}
\end{CD}$$
where the top and the bottom squares are commutative and $F$ is a map induced by the Serre functor as in Remark \ref{rem.Se}.
For $(\a, \b) \in \Hom_{\cC}(s^j\cO, s^{i+j}\cO[n])\times \Hom_{\cC}(\cO, s^j\cO)$, we have  $\Phi (\a, \b)=\a\circ \b$.
Moreover, for $(\phi, \b) \in D\Hom_{\cC}(s^{i+j+\ell}\cO, s^j\cO)\times \Hom_{\cC}(\cO, s^j\cO)$, we have $\Psi (\phi, \b)(\c)=\phi (\b\circ \c)$ for every $\c\in \Hom_{\cC}(s^{i+j+\ell}\cO, \cO)$.
Since 
$$F(\Phi(\a, \b))(\c)=F(\a\circ \b)(\c)=F(\a)(\b\circ \c)=\Psi (F(\a), \b)(\c)=\Psi((F\times \id)(\a, \b))(\c)$$ 
for every $\c\in \Hom_{\cC}(s^{i+j+\ell}\cO, \cO)$ by Remark \ref{rem.Se}, the above diagram commutes, 
so $\uH^n(\cA)\cong DA(\ell)$ as graded right $A$-modules.
Similarly, we can show that $\uH^n(\cA)\cong DA(\ell)$ as graded left $A$-modules. 
Hence $A$ is ASF-regular of dimension $n +1 \geq 2$ and of Gorenstein parameter $\ell$.

For the last statement, since $\cC$ is $\Hom$-finite, $\H^0(\cM)$ is finite dimensional for every object $\cM\in \cC$.  Since $(\cO, s)$ is ample for $\cC$, if 
$\cO\in \cC$ is a noetherian object, then $A=B(\tails A, \cO, s)_{\geq 0}$ is right noetherian by \cite[Theorem 4.5]{AZ}.  Conversely, since $(\cC, \cO, s)\cong (\tails A, \cA, (1))$, if $A=B(\tails A, \cO, s)_{\geq 0}$ is right noetherian, then $\cA\in \tails A$ is a noetherian object, so $\cO\in \cC$ is a noetherian object. 
\end{proof}

\begin{corollary}
Let $\cC$ be a $k$-linear abelian category.
Then $\cC\cong \tails A$ for some graded right coherent AS-regular algebra over $k$ of dimension at least 1 and of Gorenstein parameter $\ell$ if and only if
\begin{enumerate}
\item[(AS1)] $\cC$ has a canonical bimodule $\omega_{\cC}$, and
\item[(AS2)] there exists an ample algebraic pair $(\cO, s)$ such that $\{s^i\cO\}_{i\in \ZZ}$ is a full geometric helix of period $\ell$ for $\cD^b(\cC)$.
\end{enumerate} 

In fact, if (AS1) and (AS2) are satisfied, then $A=B(\cC, \cO, s)_{\geq 0}$ is a graded right coherent AS-regular algebra over $k$ of dimension $\gldim \cC+1$ and of Gorenstein parameter $\ell$ such that $\cC\cong \tails A$. 

In this case, $A$ is Koszul if and only if $\ell=\gldim \cC+1$ (cf. Remark \ref{rem.lg1}).  
\end{corollary}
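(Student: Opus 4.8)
The plan is to obtain the Corollary as the case $R=k$ of Theorem \ref{thm.main}, and then to derive the Koszul clause from the Gorenstein symmetry of the minimal free resolution of $k$ over $A$. I would first record two dictionary entries. On the algebra side: a locally finite $\NN$-graded algebra $A$ with $A_0=k$ is AS-regular over $k$ in the sense of \cite[Definition 3.1]{MM} if and only if it is a connected graded AS-regular algebra in the sense of Definition \ref{def.AS}, with the same dimension and Gorenstein parameter, by \cite[Corollary 3.7]{MM}. On the categorical side: for an algebraic pair $(\cO,s)$ for $\cC$, the sequence $\{s^i\cO\}_{i\in\ZZ}$ is a full geometric helix of period $\ell$ for $\cD^b(\cC)$ if and only if it is a full geometric relative helix of period $\ell$ \emph{and} $\End_{\cC}(\cO)\cong k$. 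The point is that the two notions differ only through (RE1), which for the windows $\{s^i\cO,\dots,s^{i+\ell-1}\cO\}$ demands $\End_{\cD^b(\cC)}(s^j\cO)\cong k$ rather than $\gldim\End_{\cD^b(\cC)}(s^j\cO)<\infty$, and since $s$ is an autoequivalence one has $\End_{\cD^b(\cC)}(s^j\cO)\cong\End_{\cC}(\cO)$ for every $j$; so both conditions are controlled by the single algebra $\End_{\cC}(\cO)$, which is $k$ or not.

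\emph{Reduction to Theorem \ref{thm.main}.} With these in hand the equivalence drops out of the main theorem. For $(\Rightarrow)$: if $\cC\cong\tails A$ with $A$ connected graded AS-regular of dimension $\geq1$ and Gorenstein parameter $\ell$, then $A$ is AS-regular over $A_0=k$, so Theorem \ref{thm.main} supplies (AS1) and an ample pair $(\cO,s)$ with $\{s^i\cO\}$ a full geometric relative helix of period $\ell$, while $\End_{\cC}(\cO)\cong\End_{\tails A}(\cA)\cong A_0=k$ by \cite[Proposition 4.4]{MM}; hence the helix is geometric in the absolute sense. For $(\Leftarrow)$: a full geometric helix is in particular a full geometric relative helix, so Theorem \ref{thm.main} gives that $A:=B(\cC,\cO,s)_{\geq0}$ is a graded right coherent AS-regular algebra over $A_0=\End_{\cC}(\cO)$ of dimension $\gldim\cC+1$ and Gorenstein parameter $\ell$ with $\cC\cong\tails A$; but the helix forces $\End_{\cC}(\cO)\cong k$, so $A_0=k$ and $A$ is AS-regular over $k$, hence connected graded AS-regular by \cite[Corollary 3.7]{MM}. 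This is precisely the ``in fact'' assertion.

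\emph{The Koszul clause.} Write $d:=\dim\cC+1=\gldim\cC+1=\gldim A$, and let $0\to P_d\to\cdots\to P_1\to P_0=A\to k\to0$ be the minimal graded free resolution of $k$ as a right $A$-module (of length $d$ since $\gldim A=d$). I would use that minimality means the differentials have entries in $\fm=A_{\geq1}$, so by induction $P_i$ is generated in degrees $\geq i$, and that, by definition, $A$ is Koszul precisely when each $P_i$ is generated purely in degree $i$. Next I would apply $\Hom_A(-,A)$: by the AS-Gorenstein condition $\RuHom_A(k,A)\cong k(\ell)[-d]$ (and $d\geq1$), the complex $0\to\Hom_A(P_0,A)\to\cdots\to\Hom_A(P_d,A)\to k(\ell)\to0$ is exact and still minimal, so after the reindexing $j\mapsto d-j$ and the twist by $(-\ell)$ it becomes the minimal graded free resolution of $k$ as a \emph{left} $A$-module. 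Comparing generator degrees: if $P_i=\bigoplus_\alpha A(-d_{i\alpha})$ then the $(d-i)$-th term of that left resolution is $\bigoplus_\alpha A(d_{i\alpha}-\ell)$, with generators in degrees $\ell-d_{i\alpha}$, and minimality forces $\ell-d_{i\alpha}\geq d-i$, i.e. $d_{i\alpha}\leq\ell-d+i$. Thus $i\leq d_{i\alpha}\leq\ell-d+i$ for all $i,\alpha$. If $\ell=d$ this pins $d_{i\alpha}=i$, the resolution is linear, and $A$ is Koszul. Conversely, if $A$ is Koszul then $P_d$ is generated in degree $d$, while the displayed bounds at $i=d$ give $P_d=A(-\ell)$, so $\ell=d=\dim\cC+1$. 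Finally ``cf. Remark \ref{rem.lg1}'' simply records that $\ell=\gldim\cC+1$ is exactly the regime in which the period of $\{s^i\cO\}$ equals $\gldim\cC+1$, which is where the two definitions of a helix coincide.

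\emph{Main obstacle.} No step is hard; the care-points are (i) confirming that the helix/relative-helix distinction reduces to whether $\End_{\cC}(\cO)\cong k$ --- immediate from $s$ being an autoequivalence --- and (ii) phrasing the Gorenstein self-duality of the minimal resolution precisely enough to extract $i\le d_{i\alpha}\le \ell-d+i$; for the latter I would lean on the standard computation of $\RHom_A(k,A)$ via the minimal resolution rather than reprove it.
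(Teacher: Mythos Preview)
Your reduction of the main equivalence to Theorem~\ref{thm.main} is exactly what the paper does: the only difference between a helix and a relative helix lies in (RE1), and since $s$ is an autoequivalence this collapses to whether $\End_{\cC}(\cO)\cong k$, which in turn matches $A_0\cong k$ on the algebra side. The paper's proof of the corollary is a single sentence pointing to the well-known fact that a connected graded AS-regular algebra of dimension $d$ and Gorenstein parameter $\ell$ is Koszul if and only if $\ell=d$; you instead supply a self-contained proof of that fact via the Gorenstein symmetry of the minimal free resolution, which is a legitimate and standard argument.

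One phrasing in your Koszul paragraph should be tightened. You write that ``the displayed bounds at $i=d$ give $P_d=A(-\ell)$'', but the inequalities $d\le d_{d\alpha}\le \ell$ alone do not determine the rank of $P_d$ or pin down $d_{d\alpha}=\ell$. What actually gives $P_d\cong A(-\ell)$ is the duality you already set up: the twisted dual complex is a \emph{minimal} projective resolution of the simple left module $k$, so its $0$-th term must be the projective cover $A$ of $k$, whence $\Hom_A(P_d,A)(-\ell)\cong A$ and $P_d\cong A(-\ell)$. With that in hand, Koszulness forces $P_d$ to be generated in degree $d$, so $\ell=d$. This is a cosmetic fix, not a mathematical gap; your argument is sound once the sentence is reworded.
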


\begin{proof} Note that if $A$ is AS-regular over $k$ of dimension $d$ and of Gorenstein parameter $\ell$, and
$\cdots \to F^1 \to F^0 \to k \to 0$ is the minimal free resolution of $k$ over $A$, then $F^d\cong A(-\ell)$, so $k$ has a linear resolution if and only if $\ell=d$.  In the above setting, $d=\gldim \cC+1$, so the last statement holds.  
\end{proof} 

\begin{corollary} \label{cor.main}
Let $\cC$ be a $k$-linear abelian category.
Then $\cC\cong \tails A$ for some graded right coherent AS-regular algebra over $A_0$ of dimension at least 1 if and only if
\begin{enumerate}
\item[(AS1)] $\cC$ has a canonical bimodule $\omega_{\cC}$, and
\item[(AS2)'] there exists a regular tilting object $T\in \cC$ for $\cD^b(\cC)$ such that $(T, -\otimes_{\cC}\omega _{\cC}^{-1})$ is ample for $\cC$. 
\end{enumerate}
\end{corollary}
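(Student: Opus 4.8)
The plan is to derive \corref{cor.main} directly from \thmref{thm.main}, by recognizing that (AS2)$'$ is precisely the period-one incarnation of (AS2): the passage between a regular tilting object and a full geometric relative helix of period $1$ is exactly the content of \lemref{lem.period1.1}, \lemref{lem.period1.2}, and \propref{prop.am}. So no new ideas are needed; the argument is bookkeeping that reorganizes results already established.

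For the ``if'' direction, suppose $\cC$ satisfies (AS1) and (AS2)$'$, so $\cC$ has a canonical bimodule $\omega_{\cC}$ and there is a regular tilting object $T\in\cC$ for $\cD^b(\cC)$ with $(T,\,-\otimes_{\cC}\omega_{\cC}^{-1})$ ample for $\cC$. Since $\omega_{\cC}$ is an invertible bimodule over $\cC$, the functor $s:=-\otimes_{\cC}\omega_{\cC}^{-1}$ is a $k$-linear autoequivalence of $\cC$, hence $(\cO,s):=(T,\,-\otimes_{\cC}\omega_{\cC}^{-1})$ is an algebraic pair for $\cC$, ample by hypothesis. Because $s$ is exact, $s^i\cO\cong T\lotimes_{\cC}(\omega_{\cC}^{-1})^{\lotimes_{\cC}i}$ in $\cD^b(\cC)$, so \lemref{lem.period1.2} shows that $\{s^i\cO\}_{i\in\ZZ}$ is a full geometric relative helix of period $1$ for $\cD^b(\cC)$. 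Thus (AS1) and (AS2) of \thmref{thm.main} hold with $\ell=1$, and that theorem yields $\cC\cong\tails A$ for $A=B(\cC,T,\,-\otimes_{\cC}\omega_{\cC}^{-1})_{\geq0}$, a graded right coherent AS-regular algebra over $A_0\cong\End_{\cC}(T)$ of dimension $\gldim\cC+1\geq1$; in particular $\cC$ is $\tails A$ for some graded right coherent AS-regular algebra over $A_0$ of dimension at least $1$.

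For the ``only if'' direction, suppose $\cC\cong\tails A$ for some graded right coherent AS-regular algebra $A$ over $A_0$ of dimension $\geq1$, of Gorenstein parameter $\ell\in\NN^+$. The $(\Rightarrow)$ part of \thmref{thm.main} gives that $\cC$ satisfies (AS1) and admits an ample algebraic pair $(\cO,s)$ with $\{s^i\cO\}_{i\in\ZZ}$ a full geometric relative helix of period $\ell$ for $\cD^b(\cC)$. By \propref{prop.am}(1) the object $T:=\bigoplus_{i=0}^{\ell-1}s^i\cO\in\cC$ is a regular tilting object for $\cD^b(\cC)$, and by \propref{prop.am}(3) the pair $(T,\,-\otimes_{\cC}\omega_{\cC}^{-1})$ is ample for $\cC$ since $(\cO,s)$ is. Hence (AS2)$'$ holds, finishing the proof.

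I expect no genuine obstacle in this argument; it is essentially an assembly of \thmref{thm.main}, \propref{prop.am}, and \lemref{lem.period1.2}. The only points that need care are (i) verifying that $-\otimes_{\cC}\omega_{\cC}^{-1}$ is an autoequivalence of the abelian category $\cC$ itself (immediate from invertibility of $\omega_{\cC}$ as a bimodule over $\cC$, so that $(\cO,s)=(T,-\otimes_{\cC}\omega_{\cC}^{-1})$ is a legitimate algebraic pair), and (ii) tracking that ``period $\ell$'' in (AS2) collapses to ``period $1$'' once $\bigoplus_{i=0}^{\ell-1}s^i\cO$ is bundled into a single tilting object and, conversely, that a period-$1$ helix is the same datum as a regular tilting object equipped with the ample pair $(-,-\otimes_{\cC}\omega_{\cC}^{-1})$.
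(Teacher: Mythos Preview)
Your proof is correct and follows essentially the same approach as the paper: both directions are handled by reducing to \thmref{thm.main} via \propref{prop.am} (for the ``only if'' direction, bundling the helix into a single regular tilting object) and \lemref{lem.period1.2} (for the ``if'' direction, unspooling a regular tilting object into a period-$1$ helix). The paper's proof is slightly terser but invokes exactly the same ingredients in the same order.
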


\begin{proof} If $\cC\cong \tails A$ for some graded right coherent AS-regular algebra over $A_0$ of Gorenstein parameter $\ell$, then $\cC$ has a canonical bimodule $\omega_{\cC}$, and there exists an ample algebraic pair $(\cO, s)$ for $\cC$ such that $\{s^i\cO\}_{i\in \ZZ}$ is a full geometric relative helix of period $\ell$ for $\cD^b(\cC)$ by Theorem \ref{thm.main}.  By Proposition \ref{prop.am}, 
$T:=\bigoplus _{i=0}^{\ell-1}s^i\cO\in \cC$ is a regular tilting object for $\cD^b(\cC)$ and $(T, -\otimes_{\cC}\omega _{\cC}^{-1})$ is ample for $\cC$.  

Conversely, if $T\in \cC$ is a regular tilting object for $\cD^b(\cC)$ such that $(T, -\otimes_{\cC}\omega _{\cC}^{-1})$ is ample for $\cC$, then 
$\{T\otimes _{\cC}(\omega _{\cC}^{-1})^{\otimes _{\cC}i} \}_{i\in\ZZ} = \{T\lotimes _{\cC}(\omega _{\cC}^{-1})^{\lotimes _{\cC}i} \}_{i\in\ZZ}$ is a full geometric relative helix of period 1 for $\cD^b(\cC)$ by Lemma \ref{lem.period1.2}, so the result follows from Theorem \ref{thm.main}.
\end{proof} 

\begin{corollary} \label{cor.qv} 
Let $A$ be a graded right coherent (noetherian) AS-regular algebra over $A_0$ of dimension $d$ and of Gorenstein parameter $\ell$.  For $r\in \NN^+$ such that $r\mid \ell$, $B:=A^{[r]}$ is a graded right coherent (noetherian) AS-regular algebra over $B_0$ of dimension $d$ and of Gorenstein parameter $\ell/r$. 
\end{corollary}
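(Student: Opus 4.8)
The plan is to deduce the corollary from Theorem~\ref{thm.main}, applied to the category $\cC := \tails A$, by coarsening the tautological helix $\{\cA(i)\}_{i\in\ZZ}$ of period $\ell$ to a helix of period $\ell/r$. I would first dispose of the case $d = 0$, in which $A$, and hence $A^{[r]}$, is finite dimensional and the assertion is trivial, and then assume $d \geq 1$.

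Set $\cC := \tails A$. Since $A$ is AS-regular over $A_0$ it is ASF-regular (Theorem~\ref{thm.ASF}), so $\RuGa(A) \cong DA(\ell)[-d]$ and hence $\uH^q_{\fm}(A) = 0$ for $q \neq d$; moreover $\cC$ has the canonical bimodule $\omega_{\cC}$ (Example~\ref{ex.cf}), the pair $(\cA,(1))$ is ample for $\cC$, and $\{\cA(i)\}_{i\in\ZZ}$ is a full geometric relative helix of period $\ell$ for $\cD^b(\cC)$ --- this is exactly what the $(\Rightarrow)$ direction of the proof of Theorem~\ref{thm.main} establishes, resting on \cite[Propositions 4.3 and 4.4]{MM}. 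Feeding the vanishing of $\uH^{0}_{\fm}(A)$ and $\uH^{1}_{\fm}(A)$ into the four-term exact sequence $0 \to \uH^0_{\fm}(A) \to A \to \uH^0(\cA) \to \uH^1_{\fm}(A) \to 0$ (whose middle map is the canonical homomorphism $\phi_A$ of Example~\ref{ex.phi}), exactly as in that proof, one gets an isomorphism of graded algebras $A \cong B(\cC, \cA, (1))_{\geq 0}$; in particular, applying Theorem~\ref{thm.main} to the triple $(\cC, \cA, (1))$ shows $\gldim\cC = d - 1$.

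Now put $\cO' := \bigoplus_{i=0}^{r-1}\cA(i) \in \cC$ and $s' := (r)$. Since $r \mid \ell$, Lemma~\ref{lem.lr} (its ``in particular'' clause) shows that $\{(s')^{j}\cO'\}_{j\in\ZZ}$ is a full geometric relative helix of period $\ell/r$ for $\cD^b(\cC)$, and Lemma~\ref{lem.po} shows that $(\cO',s')$ is ample for $\cC$. Hence $\cC$ together with $(\cO',s')$ satisfies (AS1) and (AS2) with Gorenstein parameter $\ell/r$, so by Theorem~\ref{thm.main} the algebra $B(\cC, \cO', s')_{\geq 0}$ is graded right coherent AS-regular over $\End_{\cC}(\cO')$, of dimension $\gldim\cC + 1 = d$ and of Gorenstein parameter $\ell/r$, with $\cC \cong \tails B(\cC, \cO', s')_{\geq 0}$. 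It remains to identify this algebra with $A^{[r]}$. Using $B(\cC, \cA, (1))^{[r]} \cong B(\cC, \bigoplus_{i=0}^{r-1}\cA(i), (r))$ and the fact that the $r$-th quasi-Veronese construction commutes with the truncation $(-)_{\geq 0}$ (both from \cite{Mbc}, exactly the manipulation performed in the proof of Theorem~\ref{thm.main}), together with $A \cong B(\cC, \cA, (1))_{\geq 0}$, one obtains
$$A^{[r]} \;\cong\; \bigl(B(\cC, \cA, (1))_{\geq 0}\bigr)^{[r]} \;\cong\; \bigl(B(\cC, \cA, (1))^{[r]}\bigr)_{\geq 0} \;\cong\; B(\cC, \cO', s')_{\geq 0},$$
so $B = A^{[r]}$ is graded right coherent AS-regular over $B_0 = \End_{\cC}(\cO')$ of dimension $d$ and of Gorenstein parameter $\ell/r$.

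For the parenthetical ``noetherian'' assertion I would apply the last statement of Theorem~\ref{thm.main} twice: to $(\cC, \cA, (1))$ it reads ``$A$ is right noetherian iff $\cA$ is a noetherian object of $\cC$'', and to $(\cC, \cO', s')$ it reads ``$A^{[r]}$ is right noetherian iff $\cO' = \bigoplus_{i=0}^{r-1}\cA(i)$ is a noetherian object of $\cC$''. Since $(i)$ is an autoequivalence of $\cC$, each $\cA(i)$ is noetherian iff $\cA$ is, and a finite direct sum of noetherian objects is noetherian while a direct summand of a noetherian object is noetherian; hence $\cA$ is noetherian iff $\cO'$ is, and the two conditions coincide. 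The only step here that is not purely formal is the graded-algebra identification $A \cong B(\cC, \cA, (1))_{\geq 0}$, which is precisely where ASF-regularity (concentration of local cohomology in top degree) is used; once that is in hand, everything reduces to feeding the coarsened helix into Theorem~\ref{thm.main} through Lemmas~\ref{lem.lr} and~\ref{lem.po}.
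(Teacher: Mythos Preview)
Your proof is correct and follows essentially the same route as the paper: invoke the $(\Rightarrow)$ direction of Theorem~\ref{thm.main} to get the ample pair $(\cA,(1))$ and the full geometric relative helix $\{\cA(i)\}$ of period $\ell$, coarsen via Lemmas~\ref{lem.lr} and~\ref{lem.po}, identify $A^{[r]}$ with $B(\tails A,\bigoplus_{i=0}^{r-1}\cA(i),(r))_{\geq 0}$, and feed this back into Theorem~\ref{thm.main}. Your treatment is simply more explicit about the identification $A\cong B(\cC,\cA,(1))_{\geq 0}$, the quasi-Veronese manipulation, the $d=0$ edge case, and the noetherian clause, all of which the paper leaves implicit or handles in one line.
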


\begin{proof}  Since $(\cA, (1))$ is an ample algebraic pair for $\tails A$ such that $\{\cA(i)\}_{i\in \ZZ}$ is a full geometric relative helix of period $\ell$ for $\cD^b(\tails A)$ by the proof of Theorem \ref{thm.main},  $(\bigoplus _{i=0}^{r-1}\cA(i), (r))$ is an ample algebraic pair for $\tails A$ such that $\{(\bigoplus _{i=0}^{r-1}\cA(i))(rj)\}_{j\in \ZZ}$ is a full geometric relative helix of period $\ell/r$ for $\cD^b(\tails A)$ by Lemma \ref{lem.po} and Lemma \ref{lem.lr}.
Since $B=A^{[r]}\cong B(\cD^b(\tails A), \bigoplus _{i=0}^{r-1}\cA(i), (r))_{\geq 0}$, 
we see that $B$ is a graded right coherent AS-regular algebra over $B_0$ of dimension $\gldim (\tails A)+1=d$ and of Gorenstein parameter $\ell/r$ by Theorem \ref{thm.main}. 
\end{proof} 

\begin{example} \label{ex.qvas} 
In the above Corollary, the condition $r\mid \ell$ cannot be dropped.  
For example, if $A=k[x]$ with $\deg x=3$, then $A$ is AS-regular over $k$ of dimension 1 and of Gorenstein parameter 3.  If $B:=A^{[2]}$ is AS-regular over $B_0$, then $\gldim B=\gldim A=1$, so $B=B_0[x]$ as a graded vector space by the proof of Theorem \ref{thm.main} (see also \cite [Theorem 4.15]{MM}).  Since 
$$B:=A^{[2]}=\begin{pmatrix} k & 0 \\ 0 & k \end{pmatrix}\oplus \begin{pmatrix} 0 & kx \\ 0 & 0 \end{pmatrix}\oplus \begin{pmatrix} 0 & 0 \\ kx & 0 \end{pmatrix}\oplus \begin{pmatrix} kx^2 & 0 \\ 0 & kx^2 \end{pmatrix}\oplus \cdots  $$ 
it is not the case, so $B$ is not AS-regular over $B_0=k\times k$. 
Since $\GrMod A^{[r]}\cong \GrMod A$ for every $r\in \NN^+$, this example shows that AS-regularity is not a graded Morita invariant if we do not require algebras to be connected graded (compare with \cite [Theorem 1.3]{Zh}).
\end{example} 

\section{Smooth Quadric Surfaces in a Quantum $\PP^3$} 

It is well-known that, for a smooth quadric surface $Q$ in $\PP^3$,
there exists a noetherian AS-regular algebra $B=k\<x, y\>/(x^2y-yx^2, xy^2-y^2x)$ of dimension 3 and of Gorenstein parameter 4 such that $\coh Q\cong \tails B$.
In this section, we will prove a noncommutative generalization of this result as an application of the main result of this paper.  

Throughout this section, we assume that $k$ is an algebraically closed field of characteristic $0$.

\begin{definition}[\cite {SV}]
We say that a $k$-linear abelian category $\cC$ is a smooth quadric surface in a quantum $\PP^3$ if $\cC\cong \tails S/(f)$ 
where $S$ is a 4-dimensional noetherian quadratic AS-regular algebra over $k$ and $f\in S_2$ is a central regular element such that $A=S/(f)$ is a domain and a graded isolated singularity. 
\end{definition}

Let $S$ be a $4$-dimensional noetherian quadratic AS-regular algebra over $k$.
Then the Hilbert series of $S$ is $H_S(t)= 1/(1-t)^{4}$, and $S$ is a Koszul domain.
Let $f \in S_2$ be a central regular element and $A=S/(f)$.
Then $A$ is a noetherian AS-Gorenstein Koszul algebra of dimension $3$ and of Gorenstein parameter $2$.
There exists a central regular element $z \in A^!$ of degree $2$ such that $A^!/(z) \cong S^!$ where $A^!, S^!$ are Koszul duals of $A, S$.
We define $C(A) := A^![z^{-1}]_0$.

We call $M\in \grmod A$ graded maximal Cohen-Macaulay if $\depth M = \lcd M = \lcd A \; (=3)$ or $M=0$.
It is well-known that $M\in \grmod A$ is graded maximal Cohen-Macaulay if and only if $\uExt^i_A(M ,A) =0$ for all $i \neq 0$.
We write $\CM^{\ZZ}(A)$ for the full subcategory of $\grmod A$ consisting of graded maximal Cohen-Macaulay modules.

\begin{proposition} \label{prop.C(A)}
Let $S$ be a $4$-dimensional noetherian quadratic AS-regular algebra over $k$, and 
$f \in S_2$ a central regular element.  
If $A=S/(f)$ is a domain, then the following are equivalent:
\begin{enumerate}
\item $A$ is a graded isolated singularity. 
\item $A$ is of finite Cohen-Macaulay representation type (i.e, there exist only finitely many indecomposable graded maximal Cohen-Macaulay modules up to isomorphisms and degree shifts).
\item $C(A)$ is a semisimple ring.
\item $C(A) \cong M_2(k) \times M_2(k)$.
\end{enumerate}
\end{proposition}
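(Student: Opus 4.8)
The plan is to reduce each of (1)--(4) to a property of the finite-dimensional $k$-algebra $C(A)$, using the stable category $\uCM^{\ZZ}(A)$ of graded maximal Cohen--Macaulay $A$-modules as a bridge. Two structural inputs are needed. First, the triangle equivalence $\uCM^{\ZZ}(A)\cong\cD^b(\mod C(A))$ arising from the Clifford-algebra description of the noncommutative quadric $A=S/(f)$ (as in \cite{SV}), under which the degree-shift autoequivalence $(1)$ of $\uCM^{\ZZ}(A)$ corresponds---up to a permutation of the indecomposable projectives---to the suspension of $\cD^b(\mod C(A))$. Second, the facts that $\dim_k C(A)=8$ and that $C(A)$ is self-injective. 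For these: since $A$ is Koszul with $H_A(t)=(1+t)/(1-t)^3$, its Koszul dual satisfies $H_{A^!}(t)=1/H_A(-t)=(1+t)^3/(1-t)$, so $\dim_k A^!_n=8$ for $n\gg 0$; as $z\in A^!_2$ is central regular with $A^!/(z)\cong S^!$ finite-dimensional, $A^![z^{-1}]$ is strongly $\ZZ$-graded with every graded piece of dimension $8$, hence $\GrMod A^![z^{-1}]\cong\Mod C(A)$ and $\dim_k C(A)=8$; and $A^!$ is $k[z]$-free with Frobenius fibre $S^!=A^!/(z)$, hence a Frobenius $k[z]$-algebra, so its degree-zero localization $C(A)$ is self-injective (equivalently, $C(A)$ is the even Clifford algebra of the quadric $A$, and such algebras are Frobenius).

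For $(1)\Leftrightarrow(3)$: by definition $A$ is a graded isolated singularity if and only if $\gldim\tails A<\infty$. Since $A$ is noetherian AS-Gorenstein of Gorenstein parameter $2\le 3=\dim A$, Orlov's theorem gives a semiorthogonal decomposition $\cD^b(\tails A)=\langle\uCM^{\ZZ}(A),\pi A,\pi A(1)\rangle$; as the two exceptional objects span a piece of finite homological dimension, $\gldim\tails A<\infty$ is equivalent to $\uCM^{\ZZ}(A)$ having finite homological dimension, i.e. to $\gldim C(A)<\infty$. By the second input, a finite-dimensional self-injective algebra has finite global dimension if and only if it is semisimple; hence $\gldim C(A)<\infty$ if and only if $C(A)$ is semisimple.

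For $(3)\Leftrightarrow(4)$ and $(3)\Rightarrow(2)\Rightarrow(1)$: the implication $(4)\Rightarrow(3)$ is trivial. If $C(A)$ is semisimple then, $k$ being algebraically closed, $C(A)\cong\prod_{i=1}^r M_{n_i}(k)$ with $\sum_i n_i^2=\dim_k C(A)=8$; to force $r=2$ and $n_1=n_2=2$ one uses that the semisimple $C(A)$ is a $k$-form of the even Clifford algebra of a nondegenerate rank-$4$ quadratic form---equivalently, that it has exactly two simple modules, the two ``spinor'' objects of $\uCM^{\ZZ}(A)$---which among the splittings of $8$ leaves only $M_2(k)\times M_2(k)$. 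For $(3)\Rightarrow(2)$: a semisimple $C(A)$ makes $\cD^b(\mod C(A))$ have only finitely many indecomposables up to suspension, so by the compatibility in the first input $\uCM^{\ZZ}(A)$ has only finitely many indecomposables up to the degree shift; together with the free modules $A(n)$ this says $A$ has finite Cohen--Macaulay representation type. For $(2)\Rightarrow(1)$: finite Cohen--Macaulay representation type forces $A$ to be a graded isolated singularity by the graded analogue of Auslander's theorem (cf. \cite{U}). This closes the cycle $(1)\Rightarrow(3)\Rightarrow(4)\Rightarrow(2)\Rightarrow(1)$.

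I expect the main obstacle to be the structural input, not the homological bookkeeping: identifying $C(A)=A^![z^{-1}]_0$ with (a form of) an even Clifford algebra---so that it is self-injective and, when semisimple, has exactly two blocks---and checking that the degree-shift autoequivalence of $\uCM^{\ZZ}(A)$ matches the triangulated shift on $\cD^b(\mod C(A))$ closely enough to transport finiteness of the set of indecomposables. It is precisely there that the standing hypotheses (the form of $S$ and $f$, and $k$ algebraically closed of characteristic $0$) enter; everything else reduces to the Hilbert-series computation above together with formal properties of semiorthogonal decompositions.
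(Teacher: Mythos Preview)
Your proposal is correct and follows the same route as the references the paper cites (the paper's own proof consists entirely of citations to \cite{SV} and \cite{Ucm}). You correctly identify the only non-formal step: pinning down $C(A)\cong M_2(k)\times M_2(k)$ among the semisimple $8$-dimensional $k$-algebras requires the domain hypothesis on $A$ (the nondegeneracy of the quadric), which is exactly what \cite[Proposition~5.3]{SV} supplies.
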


\begin{proof}
(1) $\Rightarrow$ (3) follows from \cite[Theorem 5.6]{SV}.
(3) $\Rightarrow$ (2) follows from \cite[Proposition 4.1]{Ucm}.
(2) $\Rightarrow$ (1) follows from \cite[Theorem 3.4]{Ucm}.
(3) $\Leftrightarrow$ (4) follows from \cite[Proposition 5.3]{SV}.
\end{proof}

\emph{For the rest, we assume that $S$ is a $4$-dimensional noetherian quadratic AS-regular algebra over $k$, 
$f \in S_2$ is a central regular element, and 
$A=S/(f)$ is a domain and a graded isolated singularity}.
In this case, $\gldim (\tails A)=2$ and $\tails A$ has the canonical bimodule $\omega _{\cA}$ such that $\cM\otimes _{\cA}\omega _{\cA}=\cM_{\nu}(-2)$ for $\cM\in \tails A$ by Example \ref{ex.cf}
where $\nu$ is the Nakayama automorphism of $A$. We define
\[ {\mathbb M} =\{ M \in \CM^{\ZZ}(A) \mid M \;\textnormal{is indecomposable}, M_0 \cong k^2, M = M_0A \}/_{\cong}.\]
The stable category of graded maximal Cohen-Macaulay modules, denoted by $\uCM^{\ZZ}(A)$,
has the same objects as $\CM^{\ZZ}(A)$ and the morphism set is given by
\[ \Hom_{\uCM^{\ZZ}(A)}(M, N) = \Hom_{A}(M,N)/P(M,N) \]
for any $M,N \in \CM^{\ZZ}(A)$, where $P(M,N)$ consists of degree zero $A$-module homomorphisms that factor through a projective module in $\GrMod A$.
Since $A$ is AS-Gorenstein, $\uCM^{\ZZ}(A)$ is a triangulated category
with respect to the translation functor $M[-1]= \Omega M$ (see \cite{Bu}).

\begin{proposition} \label{prop.SV1}
The following hold.
\begin{enumerate}
\item There exists a duality $G: \uCM^{\ZZ}(A) \xrightarrow{\sim} \cD^b(\mod C(A))$ such that
$G(M)$ is a simple $C(A)$-module for every $M\in \mathbb M$.
\item $\mathbb M$ consists of two non-isomorphic modules, say $X$ and $Y$.
Moreover every non-projective indecomposable graded maximal Cohen-Macaulay module is isomorphic to $X(i)$ or $Y(i)$ for some $i\in \ZZ$.
\end{enumerate}
\end{proposition}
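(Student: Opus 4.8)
The plan is to derive both parts from the Koszul-duality description of $\uCM^{\ZZ}(A)$ due to Smith and Van den Bergh, together with the already-established isomorphism $C(A)\cong M_2(k)\times M_2(k)$ (\propref{prop.C(A)}, which applies because $A$ is a graded isolated singularity). For part (1), I would first record from \cite{SV} that the Koszul dual $A^!$, the central regular element $z\in A^!_2$, and the identification $C(A)=A^![z^{-1}]_0$ produce a triangulated duality $G\colon \uCM^{\ZZ}(A)\xrightarrow{\sim}\cD^b(\mod C(A))$, obtained by sending a graded maximal Cohen-Macaulay module $M$ to the $A^!$-module $\uExt^*_A(M,k)$, inverting $z$, and extracting the degree-zero part over $C(A)$. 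To see that $G(M)$ is simple for $M\in\mathbb M$, I would unwind this construction on the matrix-factorization side: the conditions $M=M_0A$ and $M_0\cong k^2$ force the reduced graded matrix factorization of $f$ over $S$ presenting $M$ to be a pair of $2\times2$ matrices of linear forms, and $G$ carries such a linear matrix factorization to a $C(A)$-module whose underlying vector space is $2$-dimensional. An indecomposable $2$-dimensional module over $M_2(k)\times M_2(k)$ is necessarily simple, and $G(M)$ is indecomposable because $M$ is and $G$ is an equivalence; hence $G(M)$ is one of the two simple $C(A)$-modules.

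For part (2), let $U_1,U_2$ be the two non-isomorphic simple $C(A)$-modules (each $2$-dimensional). Applying $G^{-1}$ and shifting the internal grading so that the degree-zero component is the space of generators, I obtain modules $X,Y\in\CM^{\ZZ}(A)$ which lie in $\mathbb M$, with $X\not\cong Y$ since $U_1\not\cong U_2$; conversely, any $M\in\mathbb M$ satisfies $G(M)\cong U_1$ or $G(M)\cong U_2$ by part (1), so $M\cong X$ or $M\cong Y$. Thus $\mathbb M=\{X,Y\}$. For the last assertion, let $N$ be a non-projective indecomposable graded maximal Cohen-Macaulay module. Then $N$ has no free summand, so it is a nonzero indecomposable object of $\uCM^{\ZZ}(A)$, whence $G(N)$ is a nonzero indecomposable object of $\cD^b(\mod C(A))$; since $C(A)$ is semisimple, $G(N)\cong U_j[m]$ for some $j\in\{1,2\}$ and $m\in\ZZ$. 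To convert the shift $[m]$ into a degree twist I would use the periodicity $\Omega^2 M\cong M(-2)$ for the degree-$2$ hypersurface $A$ — that is, $[2]\cong(2)$ on $\uCM^{\ZZ}(A)$, which $G$ matches with $[-2]$ on $\cD^b(\mod C(A))$. A routine comparison then shows that $G$ carries $\{X(i),Y(i)\}_{i\in\ZZ}$ bijectively onto $\{U_1[m],U_2[m]\}_{m\in\ZZ}$, the full set of indecomposable objects of $\cD^b(\mod C(A))$; consequently $N\cong X(i)$ or $N\cong Y(i)$ for some $i$.

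The main obstacle lies in the two properties of $G$ that the argument rests on and that are not formal: first, that $G$ intertwines the cosyzygy and internal-shift autoequivalences of $\uCM^{\ZZ}(A)$ with (shifted) autoequivalences of $\cD^b(\mod C(A))$, which is the Kn\"orrer--Orlov periodicity bookkeeping needed to pass between ``indecomposable object of $\cD^b(\mod C(A))$'' and ``graded maximal Cohen-Macaulay module up to twist''; and second, the rank computation showing that $G$ sends $\mathbb M$ onto the set of simple $C(A)$-modules, which uses the explicit Clifford-type structure of $A^!$ and $C(A)$ from \cite{SV} (see also \cite{Ucm}) rather than a formal argument. With these in hand, what remains is the periodicity bookkeeping together with the finite-Cohen-Macaulay-representation-type input already packaged in \propref{prop.C(A)}.
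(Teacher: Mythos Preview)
Your outline is essentially the paper's own argument, which is proved almost entirely by citation: part (1) is \cite[Proposition~5.2(1), Proposition~5.4]{SV}, and the first clause of part (2) follows from \propref{prop.C(A)} together with \cite[Proposition~5.4]{SV}. Your unpacking of these citations (Koszul-dual construction of $G$, the $2\times 2$ linear matrix-factorization picture forcing $G(M)$ to be a $2$-dimensional, hence simple, $C(A)$-module) is accurate.

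The one place where your argument is thinner than it looks is the final assertion of (2). You invoke only the $2$-periodicity $\Omega^2 M\cong M(-2)$, i.e.\ $[2]\cong(2)$ on $\uCM^{\ZZ}(A)$, and then assert that a ``routine comparison'' shows $\{X(i),Y(i)\}_{i\in\ZZ}$ exhausts the indecomposables. But $2$-periodicity alone only tells you that $X[2m],Y[2m]$ are degree twists of $X,Y$; to handle odd shifts you also need $\Omega M(1)\in\mathbb M$ for $M\in\mathbb M$, i.e.\ that $X[-1]$ and $Y[-1]$ are already of the form $X(-1)$ or $Y(-1)$. This is exactly \cite[Lemma~5.5, Proposition~5.4(1)]{SV}, and it is not formal---indeed, which of the two possibilities occurs is the standard/non-standard dichotomy discussed immediately after this proposition. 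You do flag this bookkeeping as an obstacle, so you are aware of the gap; the paper closes it by citing the proof of \cite[Proposition~4.1]{Ucm}, which packages precisely this step. With that citation (or the SV fact $\Omega M(1)\in\mathbb M$) inserted, your argument and the paper's coincide.
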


\begin{proof}
(1) This follows from \cite[Proposition 5.2 (1)]{SV} and \cite[Proposition 5.4]{SV}.

(2) By Proposition \ref{prop.C(A)}, $C(A)$ has two non-isomorphic simple modules,
so it follows from \cite[Proposition 5.4]{SV} that $\mathbb M$ consists of two non-isomorphic modules.
The last statement follows from the proof of \cite[Proposition 4.1]{Ucm}.
\end{proof}

In \cite{SV}, Smith and Van den Bergh developed the theory of smooth quadric surfaces in a quantum $\PP^3$.
By \cite[Lemma 5.5, Proposition 5.4 (1)]{SV}, if $M \in \mathbb M$, then $\Omega M(1) \in {\mathbb M}$ and $\Omega^2 M(2) \cong M$,
so, by Proposition \ref{prop.SV1} (2), the following two cases may occur.
\begin{description}
\item[(\bf standard)] $\Omega X(1) \cong Y$ and $\Omega Y(1) \cong X$. 
\item[(\bf non-standard)] $\Omega X(1) \cong X$ and $\Omega Y(1) \cong Y$. 
\end{description}
In \cite[Theorem 5.8]{SV}, Smith and Van den Bergh claim that
if $A=S/(f)$ is a domain and a graded isolated singularity, then $A$ is standard.
However, in the appendix, we will give an example of a non-standard algebra $A=S/(f)$ which is a domain and a graded isolated singularity.

\begin{definition}
We say that a smooth quadric surface $\cC$ in a quantum $\PP^3$ is standard if $\cC\cong \tails A$ where $A=S/(f)$ is standard.
\end{definition}

Note that a smooth quadric surface $Q$ in $\PP^3$ is standard (see the appendix).
To give an application of the main result of this paper, we focus on standard smooth quadric surfaces in a quantum $\PP^3$.  

\emph{For the rest, we assume that $A=S/(f)$ is a domain,  a graded isolated singularity and standard}.

\begin{proposition} \label{prop.SV2}
The following hold.
\begin{enumerate}
\item We have exact sequences 
$$0 \to Y(-1) \to A^2 \to X \to 0 \quad \textrm{and} \quad 0 \to X(-1) \to A^2 \to Y \to 0.$$
\item $\Omega^{2i+1} X \cong Y(-2i-1)$, $\Omega^{2i}X \cong X(-2i)$,  $\Omega^{2i+1} Y \cong X(-2i-1)$, and $\Omega^{2i}Y \cong Y(-2i)$ for every $i\in \ZZ$.
\item $\uCM^{\ZZ}(A) \cong \langle Y, X\rangle$.
\end{enumerate}
\end{proposition}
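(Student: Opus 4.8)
The plan is to derive all three parts from the standardness hypothesis $\Omega X(1)\cong Y$, $\Omega Y(1)\cong X$ together with Proposition~\ref{prop.SV1}; none of it should require going back to $S$ or to the structure of $C(A)$.

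First I would prove (2). Rewrite standardness as $\Omega X\cong Y(-1)$ and $\Omega Y\cong X(-1)$. Since the syzygy $\Omega$ commutes with the degree shift, an induction on $i\geq 0$ gives $\Omega^{2i}X\cong X(-2i)$ and $\Omega^{2i+1}X\cong Y(-2i-1)$ (for instance, $\Omega^{2i+1}X\cong(\Omega X)(-2i)\cong Y(-2i-1)$), and the symmetric formulas $\Omega^{2i}Y\cong Y(-2i)$, $\Omega^{2i+1}Y\cong X(-2i-1)$. Because $A$ is AS-Gorenstein, $\uCM^{\ZZ}(A)$ is triangulated with $[-1]=\Omega$, so $\Omega$ is an autoequivalence of $\uCM^{\ZZ}(A)$; from $\Omega X\cong Y(-1)$, $\Omega Y\cong X(-1)$ I get $\Omega^{-1}X\cong Y(1)$, $\Omega^{-1}Y\cong X(1)$, and running the induction downward extends the four formulas to all $i\in\ZZ$ (in $\uCM^{\ZZ}(A)$, and as graded modules when $i\geq 0$); the recalled identity $\Omega^2M(2)\cong M$ for $M\in\mathbb M$ is the $i=1$ consistency check.

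Next I would read off (1) as the module-level case $i=0$: by the definition of $\mathbb M$ we have $X=X_0A$ with $\dim_kX_0=2$, and since $X$ is an $\NN$-graded module generated in degree $0$, its minimal graded projective cover is a surjection $A^2\twoheadrightarrow X$; its kernel is the minimal first syzygy $\Omega X\cong Y(-1)$, giving $0\to Y(-1)\to A^2\to X\to 0$, and symmetrically $0\to X(-1)\to A^2\to Y\to 0$ from $\Omega Y\cong X(-1)$.

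For (3), note that the graded projective modules are zero objects of $\uCM^{\ZZ}(A)$, so by Proposition~\ref{prop.SV1}(2) every indecomposable object of $\uCM^{\ZZ}(A)$ is some $X(j)$ or $Y(j)$; by (2) each of these is isomorphic in $\uCM^{\ZZ}(A)$ to $X[-n]$ or $Y[-n]$ for a suitable $n$, hence lies in the triangulated subcategory $\langle Y,X\rangle$. Since $A$ is connected graded noetherian, $\CM^{\ZZ}(A)$ is $\Hom$-finite, whence $\uCM^{\ZZ}(A)$ is $\Hom$-finite and Krull--Schmidt, so every object is a finite direct sum of such indecomposables and $\uCM^{\ZZ}(A)=\langle Y,X\rangle$. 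The inductions in (2) and the projective-cover bookkeeping in (1) are routine; the step I expect to need the most care is this last reduction in (3) from ``generates under cones and summands'' to ``contains all indecomposables'', which rests on $\uCM^{\ZZ}(A)$ being Krull--Schmidt and should be stated explicitly rather than taken for granted.
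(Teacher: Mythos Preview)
Your arguments for (1) and (2) are correct and are essentially what the paper has in mind: it simply cites \cite[Proposition 5.4(2)]{SV} together with the standardness hypothesis, and your explicit induction and projective-cover computation unpack exactly that citation.

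For (3) your proof is correct but takes a different route from the paper. The paper argues in one line via the duality $G:\uCM^{\ZZ}(A)\xrightarrow{\sim}\cD^b(\mod C(A))$ of Proposition~\ref{prop.SV1}(1): since $C(A)$ is semisimple with exactly the two simples $G(X),G(Y)$, one has $\cD^b(\mod C(A))=\langle G(X),G(Y)\rangle$ trivially, and transporting back through $G$ gives $\uCM^{\ZZ}(A)=\langle Y,X\rangle$. Your approach instead stays inside $\uCM^{\ZZ}(A)$: you use the classification of indecomposables from Proposition~\ref{prop.SV1}(2), then invoke part (2) to identify every $X(j),Y(j)$ with a translate of $X$ or $Y$, and finally appeal to Krull--Schmidt. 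The paper's route is shorter and avoids the Krull--Schmidt bookkeeping entirely, since the equivalence $G$ handles the decomposition implicitly; your route has the virtue of tying (3) back to (2) and of being more internal to the stable category, at the cost of the extra step you yourself flag. Both are sound.
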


\begin{proof}
Since $A$ is standard, (1) and (2) follows from \cite[Proposition 5.4 (2)]{SV}.
For (3), by Proposition \ref{prop.SV1}, $\cD^b(\mod C(A))=\<G(X), G(Y)\>$, so $\uCM^{\ZZ}(A) \cong \langle Y, X\rangle$.
\end{proof}

The Auslander-Reiten quiver of $\uCM^{\ZZ}(A)$ is given as follows:
\[\xymatrix@C=1.5pc@R=0.8pc{
\cdots &X(-2) \ar@{.>}[l] &Y(-1) \ar@{.>}[l] &X \ar@{.>}[l] &Y(1) \ar@{.>}[l] &X(2) \ar@{.>}[l] &\cdots  \ar@{.>}[l]\\
\cdots &Y(-2) \ar@{.>}[l] &X(-1) \ar@{.>}[l] &Y \ar@{.>}[l] &X(1) \ar@{.>}[l] &Y(2) \ar@{.>}[l] &\cdots  \ar@{.>}[l]
}\]
where dotted arrows show the Auslander-Reiten translation $\t$ in $\uCM^{\ZZ}(A)$. (There are no arrows.)

\begin{lemma} \label{lem.Hil}
The following hold.
\begin{enumerate}
\item $H_A(t) = (1+t)(1-t)^{-3}$.
\item $H_X(t) = 2(1-t)^{-3}$.
\item $H_{\uHom_A(X,A)}(t) = 2t(1-t)^{-3}$. 
\item $H_{\uHom_A(X,X)}(t) = (1+t)(1-t)^{-3}$. 
\item $H_{\uHom_A(X,Y)}(t) = t(3-t)(1-t)^{-3}$. 
\item $H_{\uExt^1_A(X,A)}(t) = 0$. 
\item $H_{\uExt^1_A(X,X)}(t) = 0$. 
\item $H_{\uExt^1_A(X,Y)}(t) = t^{-1}$. 
\end{enumerate}
(These are also true if we exchange the roles of $X$ and $Y$.)
\end{lemma}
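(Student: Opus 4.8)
The plan is to dispose of parts~(1),~(2),~(3),~(6) by elementary bookkeeping, to compute parts~(7) and~(8) in the stable category $\uCM^{\ZZ}(A)$, and then to obtain (4) and (5) by solving a small linear system of Hilbert series. First I would get (1) from the exact sequence $0\to S(-2)\xrightarrow{\cdot f}S\to A\to 0$ together with $H_S(t)=(1-t)^{-4}$, which gives $H_A(t)=(1-t^2)(1-t)^{-4}=(1+t)(1-t)^{-3}$. For (2), applying the Hilbert series functor to the two exact sequences of \propref{prop.SV2}(1) yields $H_X=2H_A-tH_Y$ and $H_Y=2H_A-tH_X$ (the twists $Y(-1),X(-1)$ contribute a factor $t$), so $(1-t)(H_X-H_Y)=0$; hence $H_X=H_Y$ and $H_X(t)=2H_A(t)/(1+t)=2(1-t)^{-3}$. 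For (3), I would apply $\uHom_A(-,A)$ to $0\to Y(-1)\to A^2\to X\to 0$; since $X$ and $Y$ are graded maximal Cohen--Macaulay, $\uExt^i_A(X,A)=\uExt^i_A(Y,A)=0$ for $i\neq 0$ (this is already (6)), so the long exact sequence collapses to $0\to \uHom_A(X,A)\to A^2\to \uHom_A(Y,A)(1)\to 0$, and likewise with $X$ and $Y$ exchanged. Taking Hilbert series and using that $X$ and $Y$ play symmetric roles in the standard case (so $H_{\uHom_A(X,A)}=H_{\uHom_A(Y,A)}$), one gets $H_{\uHom_A(X,A)}(t)=2H_A(t)\,t/(1+t)=2t(1-t)^{-3}$.

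For (7) and (8) I would pass to $\uCM^{\ZZ}(A)$ and use the standard fact that, for graded maximal Cohen--Macaulay modules $M,N$ over the AS-Gorenstein algebra $A$, one has $\uExt^i_A(M,N)\cong \Hom_{\uCM^{\ZZ}(A)}(M,N[i])$ for every $i\geq 1$. By \propref{prop.SV2}(2) the translate $X[1]=\Omega^{-1}X$ is $Y(1)$ and $Y[1]=\Omega^{-1}Y$ is $X(1)$; more generally $X(j)\cong X[j]$ and $Y(j)\cong Y[j]$ for $j$ even, while $X(j)\cong Y[j]$ and $Y(j)\cong X[j]$ for $j$ odd. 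Transporting along the duality $G:\uCM^{\ZZ}(A)\xrightarrow{\sim}\cD^b(\mod C(A))$ of \propref{prop.SV1}(1), which sends $X,Y$ to the two non-isomorphic simple $C(A)$-modules and interchanges $[1]$ with $[-1]$, and using that $C(A)\cong M_2(k)\times M_2(k)$ is semisimple (\propref{prop.C(A)}), I obtain $\Hom_{\uCM^{\ZZ}(A)}(X,X(j))=k$ if $j=0$ and $0$ otherwise, while $\Hom_{\uCM^{\ZZ}(A)}(X,Y(j))=0$ for every $j$. Hence $\uExt^1_A(X,X)=\bigoplus_j\Hom_{\uCM^{\ZZ}(A)}(X,Y(j+1))=0$, which is (7), and $\uExt^1_A(X,Y)=\bigoplus_j\Hom_{\uCM^{\ZZ}(A)}(X,X(j+1))$ is one-dimensional and concentrated in degree $-1$, so $H_{\uExt^1_A(X,Y)}(t)=t^{-1}$, which is (8).

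Finally, for (4) and (5) I would apply $\uHom_A(X,-)$ to $0\to Y(-1)\to A^2\to X\to 0$ and $\uHom_A(-,Y)$ to $0\to X(-1)\to A^2\to Y\to 0$. Using $\uExt^i_A(X,A)=0$ for $i\neq 0$, $\uExt^1_A(Y,Y)=0$ (from (7) and symmetry) and $\uExt^1_A(A,Y)=0$, and feeding in $H_{\uExt^1_A(X,Y)}(t)=t^{-1}$ from (8), these become the exact sequences $0\to \uHom_A(X,Y)(-1)\to \uHom_A(X,A)^2\to \uHom_A(X,X)\to \uExt^1_A(X,Y)(-1)\to 0$ and $0\to \uHom_A(Y,Y)\to Y^2\to \uHom_A(X,Y)(1)\to 0$. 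Taking Hilbert series, substituting the already computed $H_{\uHom_A(X,A)}$ and $H_Y=H_X$, and using $H_{\uHom_A(Y,Y)}=H_{\uHom_A(X,X)}$ (symmetry), one is left with two equations in $H_{\uHom_A(X,X)}$ and $H_{\uHom_A(X,Y)}$ over $\QQ(t)$; eliminating $H_{\uHom_A(X,Y)}$ gives $(1-t^2)H_{\uHom_A(X,X)}=4t(1-t)^{-2}+1$, that is $H_{\uHom_A(X,X)}(t)=(1+t)(1-t)^{-3}$, and then $H_{\uHom_A(X,Y)}(t)=4t(1-t)^{-3}-t(1+t)(1-t)^{-3}=t(3-t)(1-t)^{-3}$. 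The versions with $X$ and $Y$ exchanged follow from the $X\leftrightarrow Y$ symmetry of the standard case.

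I expect the main obstacle to be clerical rather than conceptual: one must track carefully how the degree twists $(\pm 1)$ migrate between the two arguments of $\uHom$ and $\uExt$ in the long exact sequences (a twist $(-1)$ in the first argument becomes a $(+1)$, and conversely), since a single off-by-one in the exponent of $t$ propagates through the linear system and spoils (4) and (5). The only genuine input beyond routine homological algebra is the combination of the semisimplicity of $C(A)$ with the explicit $\Omega$-periodicity of $X$ and $Y$ in the standard case, which is exactly what makes the stable-category computation of $\Hom_{\uCM^{\ZZ}(A)}(X,X(j))$ and $\Hom_{\uCM^{\ZZ}(A)}(X,Y(j))$ transparent.
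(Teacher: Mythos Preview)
Your proof is correct and follows essentially the same strategy as the paper's: dispose of (1), (2), (3), (6) by elementary bookkeeping with the short exact sequences of \propref{prop.SV2}(1) and the MCM property, compute (7) and (8) in $\uCM^{\ZZ}(A)$ via the $\Omega$-periodicity and the semisimplicity of $C(A)$, and then extract (4) and (5) from long exact sequences. The only organisational differences are minor: for (2) the paper simply cites \cite[Proposition~5.4(4)]{SV} whereas you derive it from the two short exact sequences; and for (3), (4), (5) the paper splices the two sequences $0\to Y(-1)\to A^2\to X\to 0$ and $0\to X(-1)\to A^2\to Y\to 0$ into a single four-term sequence so that only $X$ appears, while you instead keep both sequences and invoke the $X\leftrightarrow Y$ symmetry of the standard case to identify $H_{\uHom_A(X,A)}=H_{\uHom_A(Y,A)}$ and $H_{\uHom_A(X,X)}=H_{\uHom_A(Y,Y)}$. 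Both routes are valid and lead to the same linear system in $\QQ(t)$.
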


\begin{proof}
(1) Since $H_S(t) = (1-t)^{-4}$, the result follows from the exact sequence $0 \to S(-2) \to S \to A \to 0$.

(2) This is \cite[Proposition 5.4 (4)]{SV}.

(3) We have an exact sequence $0 \to X(-2) \to A(-1)^2 \to A^2 \to X \to 0$ by Proposition \ref{prop.SV2} (1). Since $X \in \CM^{\ZZ}(A)$, we have an exact sequence
$0 \to \uHom_A(X,A) \to A^2\to A(1)^2 \to \uHom_A(X,A)(2) \to 0$, so the result follows.
  
(6) Since $X \in \CM^{\ZZ}(A)$, this is clear.

(7) We have 
\begin{align*}
\uExt^1_A(X,X)_i &\cong \Ext^1_{\GrMod A}(X,X(i)) \\
&\cong \Hom_{\uCM^{\ZZ}(A)}(X,X(i)[1])\\ 
&\cong \Hom_{\uCM^{\ZZ}(A)}(\Omega X,X(i))\\
&\cong \Hom_{\uCM^{\ZZ}(A)}(Y(-1),X(i)).
\end{align*}
By the structure of the Auslander-Reiten quiver of $\uCM^{\ZZ}(A)$, this is zero for any $i$. 

(8) We have 
\begin{align*}
\uExt^1_A(X,Y)_i &\cong \Ext^1_{\GrMod A}(X(1),Y(i+1)) \\
&\cong \Hom_{\uCM^{\ZZ}(A)}(X(1),Y(i+1)[1])\\
&\cong \Hom_{\uCM^{\ZZ}(A)}(\Omega X(1),Y(i+1))\\
&\cong \Hom_{\uCM^{\ZZ}(A)}(Y,Y(i+1)).
\end{align*}
If $i=-1$, then 
$\dim_k \Hom_{\uCM^{\ZZ}(A)}(Y,Y) = \dim_k \Hom_{\cD^b(\mod C(A))}(G(Y),G(Y)) = 1$
because $C(A)$ is semisimple and $G(Y)$ is a simple $C(A)$-module by Proposition \ref{prop.SV1} (1).
If $i \neq -1$, then it follows from the structure of the Auslander-Reiten quiver of $\uCM^{\ZZ}(A)$ that $\Hom_{\uCM^{\ZZ}(A)}(Y(-1),Y(i))=0$.
Thus we get the result.

(4) Since we have exact sequences $0 \to Y(-1) \to A^2 \to X \to 0$ and $0 \to X(-2) \to A(-1)^2 \to Y(-1) \to 0$ by Proposition \ref{prop.SV2} (1),
we obtain exact sequences
{\small
\begin{align*}
&0 \to \uHom_A(X,X) \to \uHom_A(A,X)^2 \to \uHom_A(Y(-1),X) \to \uExt^1_A(X,X) =0 \;\; (\textnormal{by (7)}),  \\
&0 \to \uHom_A(Y(-1),X) \to \uHom_A(A(-1),X)^2 \to \uHom_A(X(-2),X) \to \uExt^1_A(Y(-1),X) \to 0.
\end{align*}
}
Combining these, we get 
$$H_{\uHom_A(X,X)}(t) - 2H_{X}(t) +2t^{-1}H_{X}(t) - t^{-2}H_{\uHom_A(X,X)}(t) + t^{-1}H_{\uExt^1_A(Y,X)}(t) = 0,$$
so the result follows from (2), (8).

(5) Since we have an exact sequence $0 \to Y(-1) \to A^2 \to X \to 0$, we obtain an exact sequence
$0 \to \uHom_A(X,Y) \to \uHom_A(A,Y)^2 \to \uHom_A(Y(-1),Y) \to \uExt^1_A(X,Y) \to 0$, so it follows that
\[ H_{\uHom_A(X,Y)}(t) - 2H_{Y}(t) +t^{-1}H_{\uHom_A(Y,Y)}(t) - H_{\uExt^1_A(X,Y)}(t) = 0. \]
The result follows from (2), (4), (8).
\end{proof}

The graded singularity category of $A$ is defined by the Verdier localization 
$$\cD_{\rm Sg}^{\rm gr}(A) := \cD^b(\grmod A)/ \cD^b(\grproj A)$$
where $\grproj A$ is the full subcategory of $\grmod A$ consisting of projective modules.
We denote the localization functor by $\upsilon: \cD^b(\grmod A)\to \cD_{\rm Sg}^{\rm gr}(A)$.
Moreover there exists the equivalence $\uCM^{\ZZ}(A) \xrightarrow{\sim} \cD_{\rm Sg}^{\rm gr}(A)$ by Buchweitz \cite{Bu}.
Since the Gorenstein parameter of $A$ is $\ell =2 >0$, there exists the embedding
$\Phi:=\Phi_0:\cD_{\rm Sg}^{\rm gr}(A)\to \cD^b(\tails A)$ by Orlov \cite{O}.

\begin{lemma}[{\cite[Lemma 4.1]{MU}}] \label{lem.Ami}
Let $M\in \grmod A$.
If $M=M_{\geq 0}$ and $\Hom_{A}(M, A(i))=0$ for all $i\leq 0$, then $\Phi (\upsilon M)\cong  \pi M = \cM$. 
\end{lemma}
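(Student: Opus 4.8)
The plan is to unwind Orlov's construction of the fully faithful functor $\Phi = \Phi_0$ and to check that the two hypotheses on $M$ are exactly what is needed for that construction to return $\pi M = \cM$.

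Recall how $\Phi_0$ is built. Since the Gorenstein parameter of $A$ is $\ell = 2 > 0$, Orlov's theorem produces a semiorthogonal decomposition of $\cD^b(\tails A)$ in which $\cD^b(\tails A)$ is generated by the $\ell$ twists $\pi A(1-\ell), \dots, \pi A$ of $\cA$ together with the essential image of $\Phi_0$, the latter occupying the appropriate one-sided orthogonal complement of the former. On objects $\Phi_0$ is computed by choosing, for a class in $\cD_{\rm Sg}^{\rm gr}(A) = \cD^b(\grmod A)/\cD^b(\grproj A)$, a representing graded module $N$ with $N = N_{\geq 0}$ (take a sufficiently high syzygy in a projective resolution and truncate), then modifying $N$ by a complex built from $N$ and the free modules $A(1-\ell), \dots, A$ so that the image under $\pi$ of the result falls into the prescribed component; that image is $\Phi_0$ of the class. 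In particular, if for such a representative $N$ the modification may be taken trivially --- equivalently, if $\pi N$ already lies in the essential image of $\Phi_0$ --- then $\Phi_0(\upsilon N) \cong \pi N$.

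So the key step is to see that $M$ itself is an uncorrected representative. By hypothesis $M = M_{\geq 0}$, so $M$ is an admissible representative of $\upsilon M$. To see that $\pi M$ lies in $\mathrm{Im}\,\Phi_0$ one must check $\Ext^q_{\cA}(\cM, \cA(j)) = 0$ for the finitely many $j$ in the relevant window and all $q$. For $q = 0$ this follows from the hypothesis together with AS-Gorensteinness: since $A$ has depth $3$, a finite-dimensional module has no homomorphisms and no $\Ext^1$ into the torsion-free module $A(j)$, so $\Hom_{\cA}(\cM, \cA(j)) \cong \varinjlim_n \Hom_A(M_{\geq n}, A(j)) = \Hom_A(M, A(j)) = 0$ for every $j \leq 0$. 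For $q \geq 1$ one compares $\uExt_A(M, A(j))$ with the corresponding $\uExt$ in $\tails A$ by means of the local-cohomology triangle and the identification $D\RuGa(A) \cong \omega_A[3]$ coming from AS-Gorensteinness: the correction term is concentrated in cohomological degree $3$, so the two $\uExt$'s agree for $q \leq 1$, and for the remaining degrees one invokes Serre duality in $\tails A$ (afforded by the canonical bimodule $\omega_\cA$) to reduce the needed vanishing to one already in hand. Combining these, $\pi M \in \mathrm{Im}\,\Phi_0$, and by the description of $\Phi_0$ above, $\Phi(\upsilon M) = \Phi_0(\upsilon M) \cong \pi M = \cM$.

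The step I expect to be the real obstacle is the bookkeeping just indicated: fixing Orlov's grading and orientation conventions precisely so that the two hypotheses match exactly the requirement that no modification is needed, and confirming that $\Phi_0$ genuinely carries $\upsilon M$ to $\pi M$ rather than to some other object of the same subcategory. A way to short-circuit this is to quote the relevant assertion from Orlov's paper directly: on the full subcategory of $\grmod A$ consisting of modules $M$ with $M = M_{\geq 0}$ and $\uHom_A(M, A)_{\leq 0} = 0$, the composite with $\pi$ of the inclusion into $\cD^b(\grmod A)$ factors through $\upsilon$ and agrees there with $\Phi_0$. A secondary point requiring care is that $M$ is not assumed maximal Cohen--Macaulay, so the equivalence $\cD_{\rm Sg}^{\rm gr}(A) \cong \uCM^{\ZZ}(A)$ cannot be applied to $M$ directly and one must control the higher $\uExt$-groups by hand; in the situations where the lemma is applied --- $M = X$ and $M = Y$ --- this is automatic, since those modules are maximal Cohen--Macaulay and hence $\uExt^q_A(M, A) = 0$ for all $q \neq 0$.
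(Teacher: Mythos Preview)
The paper does not give its own proof of this lemma: it is quoted verbatim as \cite[Lemma~4.1]{MU} and then used without further justification. So there is no argument in the paper to compare your proposal against.

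That said, your sketch is the right shape and matches the approach in the cited reference: one checks that under the two hypotheses the object $\pi M$ already lies in the semiorthogonal component that is the essential image of $\Phi_0$, so Orlov's correction procedure is vacuous and $\Phi_0(\upsilon M)\cong \pi M$. Your own assessment of the weak point is accurate: what remains is pinning down Orlov's conventions (which window of twists, which side of the orthogonal) so that the hypothesis $\Hom_A(M,A(i))=0$ for $i\le 0$ is exactly the orthogonality needed, and verifying that $M=M_{\ge 0}$ alone makes $M$ an admissible representative of $\upsilon M$. The paragraph about handling $q\ge 1$ via local cohomology and Serre duality is more than is really required here --- in Orlov's setup the relevant orthogonality is phrased in $\cD^b(\grmod A)$, not in $\tails A$, and for the representatives in question the condition reduces to the degree-zero $\Hom$ vanishing you already have; you do not need to compute $\Ext^q_{\cA}$ separately.
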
 

\begin{lemma} \label{lem.AmiXY}
We have $\Phi\cD_{\rm Sg}^{\rm gr}(A) = \langle \cY, \cX \rangle$.
\end{lemma}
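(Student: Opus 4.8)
The plan is to leverage Orlov's embedding together with the identification $\cD_{\rm Sg}^{\rm gr}(A)\cong\uCM^{\ZZ}(A)=\langle Y,X\rangle$ furnished by Buchweitz's equivalence and Proposition~\ref{prop.SV2}(3). Under Buchweitz's equivalence, the localization functor $\upsilon$ restricted to $\CM^{\ZZ}(A)$ identifies $\uCM^{\ZZ}(A)$ with $\cD_{\rm Sg}^{\rm gr}(A)$, so $\cD_{\rm Sg}^{\rm gr}(A)=\langle \upsilon Y,\upsilon X\rangle$ and, in particular, this category is idempotent complete. Since $\Phi$ is a fully faithful exact functor of triangulated categories, its essential image is a triangulated subcategory of $\cD^b(\tails A)$, and splitting idempotents on the source (where they split by the above) transports to the target, so the essential image is in fact thick. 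Thus $\Phi\cD_{\rm Sg}^{\rm gr}(A)=\langle \Phi(\upsilon Y),\Phi(\upsilon X)\rangle$, and the lemma is reduced to the two isomorphisms $\Phi(\upsilon X)\cong\cX$ and $\Phi(\upsilon Y)\cong\cY$.

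To establish these I would apply Lemma~\ref{lem.Ami} with $M=X$ and then $M=Y$. The hypotheses are immediate from what has already been proved: since $X\in\mathbb M$ we have $X=X_0A$, hence $X$ is generated in degree $0$ and $X=X_{\geq 0}$; and $\Hom_A(X,A(i))=\uHom_A(X,A)_i$ vanishes for all $i\leq 0$ because $H_{\uHom_A(X,A)}(t)=2t(1-t)^{-3}$ by Lemma~\ref{lem.Hil}(3), a power series whose lowest nonzero coefficient occurs in degree $1$. Therefore Lemma~\ref{lem.Ami} gives $\Phi(\upsilon X)\cong\pi X=\cX$. Running the identical argument with $X$ and $Y$ interchanged (Lemma~\ref{lem.Hil} is symmetric in $X$ and $Y$, so $H_{\uHom_A(Y,A)}(t)=2t(1-t)^{-3}$ as well) yields $\Phi(\upsilon Y)\cong\cY$. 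Combining this with the first paragraph gives $\Phi\cD_{\rm Sg}^{\rm gr}(A)=\langle \cY,\cX\rangle$, as required.

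I do not anticipate a genuine obstacle. The only computational input, $H_{\uHom_A(X,A)}(t)=2t(1-t)^{-3}$, is exactly Lemma~\ref{lem.Hil}(3), and it is precisely what makes the positivity hypothesis of Lemma~\ref{lem.Ami} hold; everything else is formal manipulation with triangulated subcategories. The one point deserving a sentence of care is the transfer of the operation $\langle-,-\rangle$ (a triangulated subcategory \emph{closed under direct summands}) along the fully faithful functor $\Phi$ — i.e.\ verifying that the essential image of $\Phi$ is thick — but this follows at once from the idempotent completeness of $\uCM^{\ZZ}(A)\cong\cD_{\rm Sg}^{\rm gr}(A)$ and the full faithfulness of $\Phi$.
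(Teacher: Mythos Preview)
Your proof is correct and follows essentially the same approach as the paper: verify the hypotheses of Lemma~\ref{lem.Ami} for $X$ and $Y$ using Lemma~\ref{lem.Hil}, conclude $\Phi(\upsilon X)\cong\cX$ and $\Phi(\upsilon Y)\cong\cY$, and combine this with $\cD_{\rm Sg}^{\rm gr}(A)=\langle \upsilon Y,\upsilon X\rangle$ from Buchweitz's equivalence and Proposition~\ref{prop.SV2}(3). The only difference is that you spell out the thickness of the essential image of $\Phi$ (via idempotent completeness of $\uCM^{\ZZ}(A)$), a point the paper leaves implicit; and where the paper cites Lemma~\ref{lem.Hil}(2) to see $X=X_{\geq 0}$, you obtain this directly from $X=X_0A$ in the definition of $\mathbb{M}$.
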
 

\begin{proof}
By Lemma \ref{lem.Hil} (2), (3) and Lemma \ref{lem.Ami}, it follows that $\Phi (\upsilon X)\cong \cX$ and $\Phi (\upsilon Y)\cong \cY$.
Under the equivalence $\uCM^{\ZZ}(A) \xrightarrow{\sim} \cD_{\rm Sg}^{\rm gr}(A)$, $X, Y$ correspond to $\upsilon X, \upsilon Y$,
so $\cD_{\rm Sg}^{\rm gr}(A) = \langle \upsilon Y, \upsilon X \rangle$ by Proposition \ref{prop.SV2} (3).
Hence $\Phi\cD_{\rm Sg}^{\rm gr}(A) = \langle \Phi (\upsilon Y), \Phi (\upsilon X) \rangle = \langle \cY, \cX \rangle$.
\end{proof}

\begin{lemma} \label{lem.HE}
For $M,N \in \CM^{\ZZ}(A)$, the following hold. 
\begin{enumerate}
\item $\uHom_\cA(\cM, \cN) \cong \uHom_A(M, N)$.
\item $\uExt^1_\cA(\cM, \cN) \cong \uExt^1_A(M, N)$.
\item $\uExt^2_\cA(\cM, \cN) \cong D\uHom_A(N, M_\nu(-2))$.
\item $\uExt^q_\cA(\cM, \cN) =0$ for $q \geq 3$.
\end{enumerate}
\end{lemma}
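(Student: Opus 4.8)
The plan is to dispose of (4) immediately, obtain (1) and (2) together from the local cohomology triangle, and then deduce (3) from (1) by Serre duality. Part (4) is formal: under the standing hypotheses $A$ is a graded isolated singularity with $\gldim(\tails A)=2$, so $\uExt^q_\cA(\cM,\cN)=\bigoplus_{i\in\ZZ}\Ext^q_{\tails A}(\cM,\cN(i))=0$ for all $q\ge 3$ and all $\cM,\cN\in\tails A$.

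For (1) and (2), write $\Gamma_*$ for the section functor $\cL\mapsto\uH^0(\cL)=\bigoplus_{i\in\ZZ}\Hom_\cA(\cA,\cL(i))$, a right adjoint of $\pi:\grmod A\to\tails A$. Since $A$ is a noetherian AS-Gorenstein algebra (hence satisfies the $\chi$ condition and has finite local cohomological dimension), there is the usual local cohomology triangle $\RuGa(N)\to N\to\mathbf{R}\Gamma_*(\cN)\to\RuGa(N)[1]$ in $\cD(\GrMod A)$, and the adjunction $(\pi,\Gamma_*)$ gives $\RuHom_A(M,\mathbf{R}\Gamma_*(\cN))\cong\RuHom_\cA(\cM,\cN)$. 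Applying $\RuHom_A(M,-)$ to a rotation of this triangle yields the long exact sequence
$$\cdots\to\uExt^q_A(M,N)\to\uExt^q_\cA(\cM,\cN)\to\uExt^{q+1}_A(M,\RuGa(N))\to\uExt^{q+1}_A(M,N)\to\cdots.$$
As $N$ is graded maximal Cohen--Macaulay, $\uH^i_{\fm}(N)=0$ for $i\ne 3$, hence $\RuGa(N)\cong\uH^3_{\fm}(N)[-3]$ in $\cD(\GrMod A)$; therefore $\uExt^{q+1}_A(M,\RuGa(N))\cong\uExt^{q-2}_A(M,\uH^3_{\fm}(N))$ vanishes for $q\le 1$ (negative Ext). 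Substituting $q=0$ and $q=1$ gives $\uHom_\cA(\cM,\cN)\cong\uHom_A(M,N)$ and $\uExt^1_\cA(\cM,\cN)\cong\uExt^1_A(M,N)$, i.e. (1) and (2) (here only $N$ need be maximal Cohen--Macaulay).

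For (3), recall from Example~\ref{ex.cf} that $\omega_\cA$ is a canonical bimodule for $\tails A$ with $\cL\otimes_\cA\omega_\cA=\cL_\nu(-2)$, and $\gldim(\tails A)=2$, so $-\otimes_\cA\omega_\cA[2]$ is a Serre functor for $\cD^b(\tails A)$. Serre duality then gives $\uExt^2_\cA(\cM,\cN)\cong D\uHom_\cA(\cN,\cM\otimes_\cA\omega_\cA)=D\uHom_\cA(\cN,\cM_\nu(-2))$. Now $M_\nu(-2)$ again lies in $\CM^{\ZZ}(A)$, because the class of graded maximal Cohen--Macaulay modules is stable under degree shift and under twisting by graded algebra automorphisms (for the twist one uses $A_\nu\cong A$ as graded right $A$-modules), and $\pi(M_\nu(-2))=\cM_\nu(-2)$. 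Applying (1) to the pair $N,\,M_\nu(-2)$ gives $\uHom_\cA(\cN,\cM_\nu(-2))\cong\uHom_A(N,M_\nu(-2))$, and combining the two displays proves (3).

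The only delicate point is the setup in (1)--(2): one must invoke the local cohomology triangle and the adjunction isomorphism $\RuHom_A(M,\mathbf{R}\Gamma_*(\cN))\cong\RuHom_\cA(\cM,\cN)$ in the correct form for noetherian AS-Gorenstein $A$, and track cohomological degrees carefully so that the concentration of $\RuGa(N)$ in degree $3$ annihilates the correction terms $\uExt^{q+1}_A(M,\RuGa(N))$ for $q\le 1$. This concentration is exactly where the maximal Cohen--Macaulay hypothesis on $N$ enters, via Local Duality (Theorem~\ref{thm.ld}, which moreover identifies $\uH^3_{\fm}(N)\cong D\uHom_A(N,\omega_A)$); the remaining steps are purely formal.
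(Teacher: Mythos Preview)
Your proof is correct. Parts (3) and (4) match the paper exactly. For (1) and (2) you take a different but equivalent route: the paper uses the Artin--Zhang limit formula $\uExt^q_\cA(\cM,\cN)\cong\lim_{n\to\infty}\uExt^q_A(M_{\ge n},N)$ together with the vanishing $\uExt^i_A(M/M_{\ge n},N)=0$ for $i\le 2$ (coming from $\depth N=3$ applied to the finite-dimensional module $M/M_{\ge n}$), whereas you use the local cohomology triangle and the concentration $\RuGa(N)\cong\uH^3_\fm(N)[-3]$. Both arguments exploit the same depth hypothesis on $N$, just packaged differently: the paper's version is slightly more elementary (no derived adjunction needed, only \cite[Proposition~7.2]{AZ}), while yours makes the role of local cohomology more transparent and would generalize more readily if one wanted sharper control over $\uExt^2_\cA$ directly. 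One minor point: in your long exact sequence you need both $\uExt^q_A(M,\RuGa(N))$ and $\uExt^{q+1}_A(M,\RuGa(N))$ to vanish for $q=0,1$ in order to conclude the isomorphisms; your phrase ``vanishes for $q\le 1$'' does cover all of these (since $q-2<0$ for $q\le 1$), but it would read more cleanly to say that $\uExt^j_A(M,\RuGa(N))=0$ for all $j\le 2$.
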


\begin{proof}
Since $M/M_{\geq n}$ is  a finite dimensional module and $\depth N =3$, we have
\[\lim_{n \to \infty} \uHom_A(M/M_{\geq n}, N) = \lim_{n \to \infty}  \uExt^1_A(M/M_{\geq n}, N) = \lim_{n \to \infty}  \uExt^2_A(M/M_{\geq n}, N) =0, \]
so it follows from \cite[Proposition 7.2 (1)]{AZ} that 
\begin{align*}
&\uHom_\cA(\cM, \cN) \cong \lim_{n \to \infty} \uHom_A(M_{\geq n}, N) \cong \uHom_A(M, N) , \\
&\uExt^1_\cA(\cM, \cN) \cong \lim_{n \to \infty} \uExt^1_A(M_{\geq n}, N) \cong \uExt^1_A(M, N).
\end{align*}
This proves (1), (2). Moreover, since $\depth M_{\nu}(-2) =\depth M=3$,
\begin{align*}
\uExt^2_\cA(\cM, \cN) &\cong D\uHom_\cA(\cN, \cM\otimes_{\cA} \omega _{\cA})\\
&\cong D\uHom_\cA(\cN, \cM_\nu(-2))\\
&\cong D\uHom_A(N, M_\nu(-2))
\end{align*}
by the Serre duality and (1). This shows (3). Since $\gldim (\tails A)=2$, (4) holds.
\end{proof}

\begin{lemma} \label{lem.Hom}
The following hold.
\begin{enumerate}
\item $\Hom_\cA(\cA(i), \cA(j))=
\begin{cases} k & \; \textrm{if}\; j-i=0 \\ 0 & \; \textrm{if}\; j-i\leq -1.\end{cases}$
\item $\Hom_\cA(\cA(i), \cX(j))=
\begin{cases} k^2 & \textrm{if}\; j-i=0 \\ 0 & \textrm{if}\; j-i\leq -1. \end{cases}$
\item $\Hom_\cA(\cX(i), \cA(j))=0$ \hspace{.27in} if $j-i\leq 0$.
\item $\Hom_\cA(\cX(i), \cX(j))=
\begin{cases} k & \; \textrm{if}\; j-i=0 \\ 0 & \; \textrm{if}\; j-i\leq -1. \end{cases}$
\item $\Hom_\cA(\cX(i), \cY(j))=0$ \hspace{.27in} if $j-i\leq 0$.
\end{enumerate}
(These are also true if we exchange the roles of $\cX$ and $\cY$.)
\end{lemma}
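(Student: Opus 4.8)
The plan is to transfer every $\Hom$-space in $\tails A$ that appears in the statement into a single graded-degree component of a graded $\Hom$-module over $A$, and then read off the answer from the Hilbert series recorded in \lemref{lem.Hil}. The key observation is that $A$, $X$, and $Y$ all lie in $\CM^{\ZZ}(A)$: the module $A$ is free, and $X, Y \in {\mathbb M}$ by \propref{prop.SV1}. Hence \lemref{lem.HE}(1) applies to any pair drawn from $\{\cA, \cX, \cY\}$, giving $\uHom_\cA(\cM, \cN) \cong \uHom_A(M, N)$. Since the shift is a $k$-linear autoequivalence of $\tails A$, for such $\cM, \cN$ and $n := j-i$ we obtain
$$\Hom_\cA(\cM(i), \cN(j)) \cong \Hom_\cA(\cM, \cN(n)) = \uHom_\cA(\cM, \cN)_n \cong \uHom_A(M, N)_n ,$$
so each item of the lemma becomes a claim about one graded component of an explicit graded $A$-module.

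It then remains to evaluate these components. For (1), $\uHom_A(A, A) = A$ is $\NN$-graded with $A_0 = S_0 = k$ (as $S$ is connected graded and $f \in S_2$), giving $k$ in degree $0$ and $0$ in negative degrees. For (2), $\uHom_A(A, X) = X$, and by the definition of ${\mathbb M}$ we have $X_0 \cong k^2$ and $X = X_0A$, so $X$ is generated in degree $0$ and vanishes in negative degrees. For (3), (4), (5) I would invoke \lemref{lem.Hil}(3), (4), (5): the Hilbert series $2t(1-t)^{-3}$, $(1+t)(1-t)^{-3}$, and $t(3-t)(1-t)^{-3}$ respectively show that $\uHom_A(X,A)$ and $\uHom_A(X,Y)$ vanish in all degrees $\leq 0$, while $\uHom_A(X,X)$ is one-dimensional in degree $0$ and vanishes in negative degrees. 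The statements with $X$ and $Y$ exchanged follow verbatim, since \lemref{lem.Hil} explicitly records that its Hilbert-series identities persist under $X \leftrightarrow Y$, and the defining properties $Y_0 \cong k^2$ and $Y = Y_0A$ hold as well.

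This lemma is essentially bookkeeping, so there is no genuine obstacle. The only points requiring care are formal: verifying that $A$, $X$, $Y$ are all maximal Cohen–Macaulay so that \lemref{lem.HE}(1) may be applied, and keeping the degree conventions consistent, i.e.\ checking that $\Hom_\cA(\cM(i), \cN(j))$ corresponds to the component in degree $j-i$ (rather than $i-j$) of $\uHom_A(M, N)$, which is what makes the asymmetry between "$j-i \leq -1$" in (1), (2), (4) and "$j-i \leq 0$" in (3), (5) come out correctly from the respective Hilbert series.
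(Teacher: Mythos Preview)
Your proposal is correct and follows exactly the same approach as the paper, which simply states ``This follows from \lemref{lem.HE} (1) and \lemref{lem.Hil}.'' You have supplied the details the paper omits---verifying that $A,X,Y\in\CM^{\ZZ}(A)$ so that \lemref{lem.HE}(1) applies, tracking the degree shift $n=j-i$, and reading off each graded component from the appropriate Hilbert series---but the underlying argument is identical.
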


\begin{proof}
This follows from Lemma \ref{lem.HE} (1) and Lemma \ref{lem.Hil}.
\end{proof}

\begin{lemma}\label{lem.Ext1}
The following hold.
\begin{enumerate}
\item $\Ext^1_\cA(\cA(i), \cA(j))=0$.
\item $\Ext^1_\cA(\cA(i), \cX(j))=0$.
\item $\Ext^1_\cA(\cX(i), \cA(j))=0$.
\item $\Ext^1_\cA(\cX(i), \cX(j))=0$.
\item $\Ext^1_\cA(\cX(i), \cY(j))=0$ if $j-i \neq -1$.
\end{enumerate}
(These are also true if we exchange the roles of $\cX$ and $\cY$.)
\end{lemma}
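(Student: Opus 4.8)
The plan is to push every computation down to $\grmod A$ using Lemma~\ref{lem.HE} and then read off the answer from the Hilbert series in Lemma~\ref{lem.Hil}. First I would observe that $A$, $X$ and $Y$ all lie in $\CM^{\ZZ}(A)$: the free module $A$ has $\depth A = 3$, and $X, Y \in \mathbb M \subseteq \CM^{\ZZ}(A)$ by construction; moreover $\CM^{\ZZ}(A)$ is closed under degree shifts, so $A(i)$, $X(j)$, $Y(j)$ are all graded maximal Cohen–Macaulay. Hence for any $M, N \in \{A, X, Y\}$ and any $i, j \in \ZZ$, Lemma~\ref{lem.HE}(2) together with the grading convention $\uExt^1_\cA(\cM, \cN) = \bigoplus_{n \in \ZZ} \Ext^1_\cA(\cM, \cN(n))$ gives
\[ \Ext^1_\cA(\cM(i), \cN(j)) \cong \uExt^1_\cA(\cM, \cN)_{j-i} \cong \uExt^1_A(M, N)_{j-i}. \]
So each of (1)--(5) becomes a statement about a single graded component of $\uExt^1_A(M, N)$.

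Then I would dispatch the cases using Lemma~\ref{lem.Hil}. For (1) and (2), since $A(i)$ is projective in $\GrMod A$ we have $\uExt^1_A(A, A) = \uExt^1_A(A, X) = 0$, and the transfer above finishes. For (3) and (4), Lemma~\ref{lem.Hil}(6) and (7) give $H_{\uExt^1_A(X, A)}(t) = 0$ and $H_{\uExt^1_A(X, X)}(t) = 0$, so these graded modules vanish entirely, hence so do $\Ext^1_\cA(\cX(i), \cA(j))$ and $\Ext^1_\cA(\cX(i), \cX(j))$ for all $i, j$; the $X \leftrightarrow Y$ symmetry recorded in Lemma~\ref{lem.Hil} takes care of the companion statements. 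For (5), Lemma~\ref{lem.Hil}(8) says $H_{\uExt^1_A(X, Y)}(t) = t^{-1}$, i.e.\ $\uExt^1_A(X, Y)_n = 0$ for every $n \neq -1$; taking $n = j - i$ yields $\Ext^1_\cA(\cX(i), \cY(j)) = 0$ whenever $j - i \neq -1$, which is exactly the claim.

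Since this lemma is really a bookkeeping corollary of Lemmas~\ref{lem.HE} and \ref{lem.Hil}, there is no genuine obstacle; the only point requiring care is the degree bookkeeping, namely that the exponent $-1$ appearing in $H_{\uExt^1_A(X, Y)}(t) = t^{-1}$ is matched correctly against the condition $j - i = -1$ in (5), and that the convention $\uExt^1 = \bigoplus_n \Ext^1(-, -(n))$ is applied consistently on both the $\tails A$ side and the $\grmod A$ side. I would also state explicitly that although Lemma~\ref{lem.HE} is phrased for unshifted $M, N \in \CM^{\ZZ}(A)$, it applies to $M(i), N(j)$ because $\CM^{\ZZ}(A)$ is stable under the shift functor.
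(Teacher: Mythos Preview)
Your proof is correct and follows exactly the same approach as the paper, which simply writes ``This follows from Lemma~\ref{lem.HE} (2) and Lemma~\ref{lem.Hil}.'' You have merely unpacked that one-line citation into explicit casework, including the observation that $A$, $X$, $Y$ lie in $\CM^{\ZZ}(A)$ and the degree bookkeeping $\Ext^1_\cA(\cM(i),\cN(j))\cong \uExt^1_A(M,N)_{j-i}$, all of which is correct.
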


\begin{proof}
This follows from Lemma \ref{lem.HE} (2) and Lemma \ref{lem.Hil}.
\end{proof}

\begin{lemma} \label{lem.nu}
We have $X_\nu \cong X$ and $Y_\nu \cong Y$.
\end{lemma}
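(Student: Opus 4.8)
The plan is to compute the Serre functor of the triangulated category $\uCM^{\ZZ}(A)$ in two different ways and compare the outcomes at the object $X$. First observe that twisting by the graded automorphism $\nu$ is an autoequivalence of $\GrMod A$ which preserves finite generation, the grading, the property of being generated in degree $0$, indecomposability, and --- since $A_\nu\cong A$ as graded right $A$-modules --- projectivity and depth; hence $X_\nu$ again lies in $\mathbb M=\{X,Y\}$ (\propref{prop.SV1}(2)), so $X_\nu\cong X$ or $X_\nu\cong Y$, and likewise $Y_\nu\cong X$ or $Y_\nu\cong Y$. Because $A$ is a graded isolated singularity, $\uCM^{\ZZ}(A)$ is Hom-finite and admits a Serre functor $\mathbb S$; by the standard description of the Serre functor of the graded singularity category $\cD_{\rm Sg}^{\rm gr}(A)\simeq\uCM^{\ZZ}(A)$ of a noetherian AS-Gorenstein isolated singularity (see \cite{Ucm}, cf. \cite{Uct, MU}), one has $\mathbb S(M)\cong M\lotimes_A\omega_A[\dim A-1]=M_\nu(-2)[2]$ for $M\in\uCM^{\ZZ}(A)$, using $\dim A=3$ and $\omega_A=A_\nu(-2)$. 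Since $M[-1]=\Omega M$ in $\uCM^{\ZZ}(A)$, applying this with $M=X$ yields $\mathbb S(X)\cong\Omega^{-2}(X_\nu(-2))$; as $X_\nu\in\{X,Y\}$, \propref{prop.SV2}(2) gives $\Omega^{-2}(X_\nu(-2))\cong X_\nu$, so $\mathbb S(X)\cong X_\nu$, and symmetrically $\mathbb S(Y)\cong Y_\nu$.

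It remains to see that $\mathbb S(X)\cong X$. By \lemref{lem.Hil}(4) we have $\dim_k\uHom_A(X,X)_0=1$, so $\Hom_{\GrMod A}(X,X)=\uHom_A(X,X)_0=k\cdot\id_X$; as $X$ is non-projective, $\id_X$ does not factor through a projective module, whence $\Hom_{\uCM^{\ZZ}(A)}(X,X)\cong k$, and Serre duality gives $\Hom_{\uCM^{\ZZ}(A)}(X,\mathbb S(X))\cong D\Hom_{\uCM^{\ZZ}(A)}(X,X)\neq 0$. On the other hand $\dim_k\uHom_A(X,Y)_0=0$ by \lemref{lem.Hil}(5), so $\Hom_{\GrMod A}(X,Y)=\uHom_A(X,Y)_0=0$ and hence $\Hom_{\uCM^{\ZZ}(A)}(X,Y)=0$. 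Therefore $\mathbb S(X)\cong X_\nu$ cannot be $Y$, so $X_\nu\cong X$; since $(-)_\nu$ permutes the two-element set $\mathbb M$, it follows that $Y_\nu\cong Y$ as well.

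The one nontrivial input is the precise shape of the Serre functor of the graded singularity category: getting both the Nakayama twist and the homological shift $[\dim A-1]=[2]$ right is exactly what makes the comparison go through, and this is where I would be most careful in locating the exact reference. (If one prefers to avoid this, the same conclusion $\mathbb S(X)\cong X$ can instead be read off the Auslander--Reiten quiver of $\uCM^{\ZZ}(A)$ displayed above, where $\tau X\cong Y(-1)\cong\Omega X$ by \propref{prop.SV2}(2), so that $\mathbb S=\tau\circ[1]$ fixes $X$.) The remaining steps are routine bookkeeping with the identifications already set up in \propref{prop.SV1}, \propref{prop.SV2} and \lemref{lem.Hil}.
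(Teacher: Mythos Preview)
Your argument is correct, and the overall architecture---compute the Serre functor of $\uCM^{\ZZ}(A)$ two ways and compare at $X$---is the same as the paper's. The difference lies in how the second computation is carried out. The paper observes directly that, via the duality $G:\uCM^{\ZZ}(A)\xrightarrow{\sim}\cD^b(\mod C(A))$ of \propref{prop.SV1}(1) and the semisimplicity of $C(A)$ (\propref{prop.C(A)}), the Serre functor on $\uCM^{\ZZ}(A)$ is the identity; combining this with $\mathbb S\cong\Omega^{-2}(-)_\nu(-2)$ and \propref{prop.SV2}(2) immediately yields $X\cong X_\nu$ in $\uCM^{\ZZ}(A)$, hence in $\CM^{\ZZ}(A)$ since $X$ is indecomposable non-projective. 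You instead distinguish $\mathbb S(X)\cong X_\nu\in\{X,Y\}$ by the Hom computations $\Hom_{\uCM^{\ZZ}(A)}(X,X)\neq 0$ and $\Hom_{\uCM^{\ZZ}(A)}(X,Y)=0$ coming from \lemref{lem.Hil}(4),(5) together with Serre duality. This is perfectly valid and involves no circularity (\lemref{lem.Hil} is proved before \lemref{lem.nu} and does not use it), but it is a bit heavier: the paper's route avoids the explicit Hilbert series input by exploiting the semisimplicity of $C(A)$ once more. Your parenthetical alternative via the Auslander--Reiten quiver and $\mathbb S=\tau[1]$ is in fact essentially the paper's argument.
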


\begin{proof}
Since $C(A)$ is semisimple, $\cD^b(\mod C(A))$ has the Serre functor $\id_{\cD^b(\mod C(A))}$,
so $\uCM^{\ZZ}(A)$ has the Serre functor $\id_{\uCM^{\ZZ}(A)}$ by Proposition \ref{prop.SV1} (1).
However, by \cite[Corollary 4.5]{U}, $\uCM^{\ZZ}(A)$ has the Serre functor $-\otimes _{A}\omega _{A}[2]\cong \Omega ^{-2}(-)_\nu(-2)$. Since $(-)_{\nu}$ commutes with shifts, we have
$X \cong \Omega ^2X(2)\cong \Omega ^{-2}(\Omega ^2X(2))_\nu(-2) \cong X_\nu$
in $\uCM^{\ZZ}(A)$ by Proposition \ref{prop.SV2}. Since $X$ is indecomposable and non-projective, this means that $X \cong X_\nu$ in $\CM^{\ZZ}(A)$.
\end{proof}

\begin{lemma} \label{lem.Ext2}
The following hold.
\begin{enumerate}
\item $\Ext^2_\cA(\cA(i), \cA(j))=0$ if $j-i \geq -1$.
\item $\Ext^2_\cA(\cA(i), \cX(j))=0$ if $j-i \geq -2$.
\item $\Ext^2_\cA(\cX(i), \cA(j))=0$ if $j-i \geq -1$.
\item $\Ext^2_\cA(\cX(i), \cX(j))=0$ if $j-i \geq -1$.
\item $\Ext^2_\cA(\cX(i), \cY(j))=0$ if $j-i \geq -2$.
\end{enumerate}
(These are also true if we exchange the roles of $\cX$ and $\cY$.)
\end{lemma}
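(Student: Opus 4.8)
The plan is to reduce every assertion to the Serre-duality isomorphism of Lemma~\ref{lem.HE}~(3) together with the Hilbert series already computed in Lemma~\ref{lem.Hil}. First I would record that, for $M_0,N_0\in\{A,X,Y\}\subseteq\CM^{\ZZ}(A)$ with images $\cM_0,\cN_0\in\{\cA,\cX,\cY\}$, the shift functor gives
\[\Ext^2_\cA(\cM_0(i),\cN_0(j))\cong\Ext^2_\cA(\cM_0,\cN_0(j-i))=\uExt^2_\cA(\cM_0,\cN_0)_{j-i},\]
so it suffices to locate the degrees in which $\uExt^2_\cA(\cM_0,\cN_0)$ vanishes for the five pairs $(M_0,N_0)=(A,A),(A,X),(X,A),(X,X),(X,Y)$. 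By Lemma~\ref{lem.HE}~(3), $\uExt^2_\cA(\cM_0,\cN_0)\cong D\uHom_A(N_0,(M_0)_\nu(-2))$; since $A_\nu\cong A$ as graded right $A$-modules (via $\nu^{-1}$) and $X_\nu\cong X$, $Y_\nu\cong Y$ by Lemma~\ref{lem.nu}, this simplifies to $D\bigl(\uHom_A(N_0,M_0)(-2)\bigr)$.

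Next I would use the elementary observation that if $L$ is a graded right $A$-module with $L_m=0$ for all $m<c$, then the degree-$n$ component of $D(L(-2))$ is $D(L_{-n-2})$, which vanishes once $-n-2<c$, i.e. once $n\ge-1-c$. It then remains to read off the lowest nonzero degree $c$ of $\uHom_A(N_0,M_0)$ in each case. When $N_0=A$ one has $\uHom_A(A,M_0)\cong M_0$, so $c=0$ in cases (1) and (3) (as $A_0,X_0\neq0$), giving the bound $j-i\ge-1$. For the remaining three cases Lemma~\ref{lem.Hil} supplies the Hilbert series: part (3) gives $H_{\uHom_A(X,A)}(t)=2t(1-t)^{-3}$, so $c=1$ and hence (2) holds for $j-i\ge-2$; part (4) gives $H_{\uHom_A(X,X)}(t)=(1+t)(1-t)^{-3}$, so $c=0$ and (4) holds for $j-i\ge-1$; and part (5), with the roles of $X$ and $Y$ exchanged, gives $H_{\uHom_A(Y,X)}(t)=t(3-t)(1-t)^{-3}$, so $c=1$ and (5) holds for $j-i\ge-2$. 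The versions with $\cX$ and $\cY$ interchanged follow verbatim, since $X$ and $Y$ enter Lemmas~\ref{lem.HE}, \ref{lem.nu}, \ref{lem.Hil} symmetrically.

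I do not expect a genuine obstacle: the substance is entirely in Lemma~\ref{lem.HE}~(3), and everything else is bookkeeping. The only point demanding care is the tracking of grading shifts — in particular the conventions $(DV)_i=D(V_{-i})$ and $M(n)_i=M_{n+i}$ — so that the bound $n\ge-1-c$ is assembled with the correct signs and the stated ranges $j-i\ge-1$ and $j-i\ge-2$ come out exactly right.
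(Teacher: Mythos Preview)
Your proposal is correct and follows essentially the same approach as the paper: apply Lemma~\ref{lem.HE}~(3), simplify the twist via Lemma~\ref{lem.nu} (and the trivial $A_\nu\cong A$), and read off the vanishing range from the Hilbert series in Lemma~\ref{lem.Hil}. The paper only writes out case~(5) and declares the rest similar, while you set up the bookkeeping uniformly for all five pairs, but the substance is identical.
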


\begin{proof}
We only show (5).  The rest are similar.
It follows from Lemma \ref{lem.HE} (3) and Lemma \ref{lem.nu} that
\begin{align*}
\Ext^2_\cA(\cX(i), \cY(j)) &\cong \uExt^2_\cA(\cX(i), \cY(j))_0\\
&\cong D\uHom_A(Y(j), X_\nu(i-2))_0\\
&\cong D(\uHom_A(Y, X)_{i-j-2}),
\end{align*}
so the result follows by Lemma \ref{lem.Hil} (5). 
\end{proof}

\begin{lemma} \label{lem.mueXY}
The left mutation $L_{\cA(i)}\cY(i)$ is $\cX(i-1)$ for any $i$.
\end{lemma}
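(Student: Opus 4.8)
The plan is to write down the defining distinguished triangle of the left mutation explicitly and match it against the short exact sequence of \propref{prop.SV2}~(1).

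First I would compute the graded object $\Hom^{\bullet}_{\cA}(\cA(i),\cY(i))$. By \lemref{lem.Hom} (with the roles of $\cX$ and $\cY$ exchanged) one has $\Hom_{\cA}(\cA(i),\cY(i))\cong k^2$; by \lemref{lem.Ext1} one has $\Ext^1_{\cA}(\cA(i),\cY(i))=0$; by \lemref{lem.Ext2} one has $\Ext^2_{\cA}(\cA(i),\cY(i))=0$; and $\Ext^q_{\cA}(\cA(i),\cY(i))=0$ for $q\geq 3$ since $\gldim(\tails A)=2$. Hence $\Hom^{\bullet}_{\cA}(\cA(i),\cY(i))$ is $k^2$ placed in cohomological degree $0$, and the defining triangle of the left mutation reads
$$L_{\cA(i)}\cY(i) \to \cA(i)^2 \xrightarrow{\mathrm{ev}} \cY(i) \to,$$
where $\mathrm{ev}$ denotes the evaluation morphism.

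Next I would twist the exact sequence $0\to X(-1)\to A^2\to Y\to 0$ of \propref{prop.SV2}~(1) by $(i)$ and apply the exact functor $\pi$, producing a short exact sequence $0\to\cX(i-1)\to\cA(i)^2\xrightarrow{\phi}\cY(i)\to 0$ in $\tails A$, hence a distinguished triangle $\cX(i-1)\to\cA(i)^2\xrightarrow{\phi}\cY(i)\to\cX(i-1)[1]$ in $\cD^b(\tails A)$. It then remains to identify $\phi$ with $\mathrm{ev}$ under the obvious isomorphism $\cA(i)^2\cong\Hom^{\bullet}_{\cA}(\cA(i),\cY(i))\otimes_k\cA(i)$. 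For this I would apply $\Hom_{\cA}(\cA(i),-)$ to the short exact sequence: because $\Hom_{\cA}(\cA(i),\cX(i-1))=0$ and $\Ext^1_{\cA}(\cA(i),\cX(i-1))=0$ by \lemref{lem.Hom} and \lemref{lem.Ext1}, the map $\Hom_{\cA}(\cA(i),\cA(i)^2)\to\Hom_{\cA}(\cA(i),\cY(i))$ induced by $\phi$ is an isomorphism $k^2\to k^2$. Thus the two components of $\phi$ form a basis of $\Hom_{\cA}(\cA(i),\cY(i))$, and using that basis to trivialize $\Hom^{\bullet}_{\cA}(\cA(i),\cY(i))\otimes_k\cA(i)\cong\cA(i)^2$ carries $\mathrm{ev}$ onto $\phi$.

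Consequently the two triangles displayed above are isomorphic, and by the (essential) uniqueness of the third vertex of a distinguished triangle we conclude $L_{\cA(i)}\cY(i)\cong\cX(i-1)$. I do not expect a genuine obstacle here; the only step demanding slight care is the final identification of $\phi$ with the evaluation morphism, which is precisely why the vanishing statements of \lemref{lem.Hom} and \lemref{lem.Ext1}---rather than a mere count of dimensions---are invoked.
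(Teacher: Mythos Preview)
Your proposal is correct and follows essentially the same approach as the paper: both arguments start from the short exact sequence of \propref{prop.SV2}~(1), pass to $\tails A$, compute that $\Hom^{\bullet}_{\cA}(\cA(i),\cY(i))$ is $k^2$ concentrated in degree $0$ via the same lemmas, and conclude by matching the resulting triangle with the defining triangle of the left mutation. Your extra step verifying that $\phi$ agrees with the evaluation morphism (via the vanishing of $\Hom_{\cA}(\cA(i),\cX(i-1))$ and $\Ext^1_{\cA}(\cA(i),\cX(i-1))$) is a welcome point of rigor that the paper leaves implicit, since there the middle term is written as $Y_0\otimes_k A$ with the multiplication map, and the identification $Y_0\cong\Hom_{\cA}(\cA(i),\cY(i))$ is asserted directly.
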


\begin{proof}
By Proposition \ref{prop.SV2} (1), we have an exact sequence $0 \to X(-1) \to Y_0 \otimes_k A \to Y \to 0$ in $\grmod A$, so we have $0 \to X(i-1) \to Y_0 \otimes_k A(i) \to Y(i) \to 0$ in $\grmod A$. 
This induces an exact sequence
$0 \to \cX(i-1) \to Y_0 \otimes_k \cA(i) \to \cY(i) \to 0 $ in $\tails A$.
By Lemmas  \ref{lem.HE} (4), \ref{lem.Hom}, \ref{lem.Ext1} and \ref{lem.Ext2}, $Y_0 = \Hom_\cA(\cA(i), \cY(i))$ and $\Ext^q_\cA(\cA(i), \cY(i)) = 0$ for $q \neq 0$,
so we obtain a distinguished triangle
$$ \cX(i-1) \to \Hom_{\cA}^\bullet(\cA(i), \cY(i)) \otimes_k \cA(i) \to \cY(i) \to$$
in $\cD^b(\tails A)$.
This means that the left mutation of $\cY(i)$ by $\cA(i)$ is given by $\cX(i-1)$.
\end{proof}

\begin{theorem} \label{thm.fe2}
$\{\cA(-1), \cX(-1), \cA, \cX\}$ is a full exceptional sequence for $\cD^b(\tails A)$.
\end{theorem}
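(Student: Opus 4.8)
The plan is to obtain the statement from Orlov's semiorthogonal decomposition of $\cD^b(\tails A)$ together with a single mutation of exceptional sequences. Since $A$ is an AS-Gorenstein algebra of Gorenstein parameter $\ell=2>0$, the embedding $\Phi=\Phi_0$ fits into a semiorthogonal decomposition $\cD^b(\tails A)=\langle \cA(-1),\cA,\Phi\cD_{\rm Sg}^{\rm gr}(A)\rangle$ by Orlov \cite{O}, and by \lemref{lem.AmiXY} the singularity block is $\Phi\cD_{\rm Sg}^{\rm gr}(A)=\langle \cY,\cX\rangle$. Hence $\cD^b(\tails A)=\langle \cA(-1),\cA,\cY,\cX\rangle$.

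First I would verify that $\{\cA(-1),\cA,\cY,\cX\}$ is a full exceptional sequence for $\cD^b(\tails A)$. Fullness is part of the decomposition just quoted. For the exceptional conditions: each of the four objects has endomorphism ring $k$ and no higher self-extensions, by \lemref{lem.Hom} (1), (4), \lemref{lem.Ext1} (1), (4), \lemref{lem.Ext2} (1), (4) (index difference $0$, which lies in the allowed ranges there) together with $\gldim(\tails A)=2$; and the required vanishing $\Ext^q_\cA(E_i,E_j)=0$ for all $q$ and all $j<i$ — that is, for the pairs $(\cA,\cA(-1))$, $(\cY,\cA(-1))$, $(\cY,\cA)$, $(\cX,\cA(-1))$, $(\cX,\cA)$ and $(\cX,\cY)$ — follows from \lemref{lem.Hom} for $q=0$, \lemref{lem.Ext1} for $q=1$, \lemref{lem.Ext2} for $q=2$ (each relevant index difference, $-1$ or $0$, satisfies the hypotheses), and from $\gldim(\tails A)=2$ for $q\ge 3$.

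Next I would apply the left mutation $L_1$. By \lemref{lem.mue}, $L_1\{\cA(-1),\cA,\cY,\cX\}=\{\cA(-1),L_{\cA}\cY,\cA,\cX\}$ is again a full exceptional sequence for $\cD^b(\tails A)$, and \lemref{lem.mueXY} (with $i=0$) identifies $L_{\cA}\cY$ with $\cX(-1)$. Therefore $\{\cA(-1),\cX(-1),\cA,\cX\}$ is a full exceptional sequence, which is the assertion. (Equivalently, the triangle $\cX(-1)\to \Hom_\cA^\bullet(\cA,\cY)\otimes_k\cA\to\cY$ defining $L_\cA\cY=\cX(-1)$ shows $\cY\in\langle\cX(-1),\cA\rangle$, so $\langle\cA(-1),\cX(-1),\cA,\cX\rangle$ contains both $\langle\cA(-1),\cA\rangle$ and $\langle\cY,\cX\rangle$ and hence equals $\cD^b(\tails A)$.)

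The only point that really needs care is pinning down the exact shape of Orlov's decomposition — specifically that the exceptional pair of line bundles lying semiorthogonally to the left of $\Phi\cD_{\rm Sg}^{\rm gr}(A)=\langle\cY,\cX\rangle$ is precisely $\langle\cA(-1),\cA\rangle$ and not a neighbouring pair. This is forced by the computations already available: the shift $\langle\cA(-2),\cA(-1)\rangle$ is impossible since $\Ext^2_\cA(\cY,\cA(-2))\ne 0$ (by \lemref{lem.HE} (3), \lemref{lem.nu} and \lemref{lem.Hil}), and $\langle\cA,\cA(1)\rangle$ is impossible since $\Hom_\cA(\cY,\cA(1))\ne 0$ (by \lemref{lem.HE} (1) and \lemref{lem.Hil} (3)), each of which would violate condition (RE3). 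Everything else is routine bookkeeping with the Hom/Ext lemmas established above.
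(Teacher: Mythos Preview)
Your proof is correct and follows essentially the same route as the paper: establish that $\{\cA(-1),\cA,\cY,\cX\}$ is a full exceptional sequence via Orlov's decomposition together with \lemref{lem.AmiXY} and the Hom/Ext computations, then apply a single left mutation using \lemref{lem.mueXY} and \lemref{lem.mue}. The paper presents the steps in the reverse order (first checking exceptionality of $\{\cA(-1),\cX(-1),\cA,\cX\}$ directly, then deducing fullness through the auxiliary sequence), but the content is the same; your final paragraph about ``pinning down'' which line bundles appear is unnecessary, since Orlov's theorem \cite[Theorem 2.5]{O} specifies them explicitly for $\Phi_0$ with Gorenstein parameter $2$.
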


\begin{proof}
By Lemmas \ref{lem.HE} (4), \ref{lem.Hom}, \ref{lem.Ext1}, and \ref{lem.Ext2}, for $i=0, -1$ and any $q \geq 1$, we have
\begin{align*}
&\Hom_\cA(\cA(i), \cA(i)) = k, &&\Hom_\cA(\cX(i), \cX(i)) = k,\\
&\Ext^q_\cA(\cA(i), \cA(i)) = 0, &&\Ext^q_\cA(\cX(i), \cX(i)) = 0,\\
&\Hom_\cA(\cX(-1), \cA(-1)) = 0, &&\Hom_\cA(\cA, \cA(-1)) = 0,  &&\Hom_\cA(\cX, \cA(-1))= 0,\\
&\Hom_\cA(\cA, \cX(-1)) = 0,  &&\Hom_\cA(\cX, \cX(-1)) = 0, &&\Hom_\cA(\cX, \cA) = 0,\\
&\Ext^q_\cA(\cX(-1), \cA(-1)) = 0, &&\Ext^q_\cA(\cA, \cA(-1)) = 0,  &&\Ext^q_\cA(\cX, \cA(-1))= 0,\\
&\Ext^q_\cA(\cA, \cX(-1)) = 0,  &&\Ext^q_\cA(\cX, \cX(-1)) = 0, &&\Ext^q_\cA(\cX, \cA) = 0,
\end{align*}
so $\{\cA(-1), \cX(-1), \cA, \cX\}$ is an exceptional sequence. To prove that it is full, we now consider the sequence $\{\cA(-1), \cA, \cY, \cX\}$.
By Lemma \ref{lem.mueXY}, we have
\[ L_2\{\cA(-1), \cA, \cY, \cX\} = \{\cA(-1), L_{\cA}(\cY), \cA, \cX \} = \{\cA(-1), \cX(-1), \cA, \cX \},  \]
so $\{\cA(-1), \cA, \cY, \cX\}$ is an exceptional sequence by Lemma \ref{lem.mue}.
Moreover
\begin{align*}
\langle \cA(-1), \cA,  \cY, \cX \rangle 
&\cong \langle \cA(-1), \cA,  \Phi\cD_{\rm Sg}^{\rm gr}(A) \rangle &&\textnormal{by Lemma \ref{lem.AmiXY}}\\
&\cong\cD^b(\tails A) &&\textnormal{by \cite[Theorem 2.5 (\rnum{1})]{O}},
\end{align*}
so $\{\cA(-1), \cA, \cY, \cX\}$ is a full exceptional sequence.
Hence $\{\cA(-1), \cX(-1), \cA, \cX\}$ is also a full exceptional sequence by Lemma \ref{lem.mue} again.
\end{proof}

\begin{theorem} \label{thm.qgh}
Let $E_{2i}:= \cA(i), E_{2i+1}:=\cX(i)$. Then $\{E_i\}_{i\in\ZZ}$
is a full geometric helix of period 4 for $\cD^b(\tails A)$.
\end{theorem}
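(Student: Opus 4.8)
The plan is to verify directly the three ingredients of a full geometric helix of period $4$ for $\{E_i\}_{i\in\ZZ}$ with $E_{2i}=\cA(i)$, $E_{2i+1}=\cX(i)$, cutting the work down to finitely many cases by symmetry. Observe first that $E_{j+2}\cong E_j(1)$ for every $j$, so the shift autoequivalence $(1)$ carries the window $\{E_j,\dots,E_{j+3}\}$ onto $\{E_{j+2},\dots,E_{j+5}\}$; since exceptionality and fullness of a window are preserved by an autoequivalence, it suffices to treat the windows starting at $j=0$ and $j=1$. The window at $j=0$ is $\{\cA,\cX,\cA(1),\cX(1)\}$, the image under $(1)$ of the full exceptional sequence $\{\cA(-1),\cX(-1),\cA,\cX\}$ of Theorem~\ref{thm.fe2}, hence is full exceptional; and the window at $j=1$ is the image under $(1)$ of $\{\cX(-1),\cA,\cX,\cA(1)\}$, which I handle below.

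For the periodicity axiom (H2), recall that the canonical bimodule satisfies $\cM\otimes_{\cA}\omega_{\cA}\cong\cM_\nu(-2)$ for $\cM\in\tails A$, so $-\lotimes_{\cA}\omega_{\cA}^{-1}$ acts by $\cM\mapsto\cM_{\nu^{-1}}(2)$. For any graded automorphism $\sigma$ of $A$ the map $a\mapsto\sigma(a)$ is an isomorphism of graded right (and left) $A$-modules $A\xrightarrow{\sim}A_\sigma$, so $\cA_{\nu^{-1}}\cong\cA$ in $\tails A$; and $X_{\nu^{-1}}\cong X$ by Lemma~\ref{lem.nu} (applying $(-)_{\nu^{-1}}$ to $X_\nu\cong X$), so $\cX_{\nu^{-1}}\cong\cX$. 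Since twists commute with shifts, $E_{2i}\lotimes_{\cA}\omega_{\cA}^{-1}\cong\cA(i+2)=E_{2i+4}$ and $E_{2i+1}\lotimes_{\cA}\omega_{\cA}^{-1}\cong\cX(i+2)=E_{2i+5}$, which is (H2).

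Next I would read off geometricity and the exceptionality of the two windows from Lemmas~\ref{lem.Hom}, \ref{lem.Ext1} and \ref{lem.Ext2}. Because each $E_i$ lies in the heart $\tails A$ and $\gldim(\tails A)=2$, one has $\Hom(E_i,E_j[q])=0$ for $q\le-1$ and for $q\ge3$. Every pair $(E_i,E_j)$ is an $(\cA,\cA)$, $(\cA,\cX)$, $(\cX,\cA)$ or $(\cX,\cX)$ pair, so $\Ext^1_{\cA}(E_i,E_j)=0$ by Lemma~\ref{lem.Ext1} with no restriction; and when $i\le j$ the corresponding shift difference is $\ge0$, so $\Ext^2_{\cA}(E_i,E_j)=0$ by Lemma~\ref{lem.Ext2}. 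Hence $\Hom(E_i,E_j[q])=0$ for all $q\ne0$ and all $i\le j$: the helix is geometric (and in particular (RE2) holds for every window). Condition (RE1) is $\End_{\cA}(\cA(i))=\End_{\cA}(\cX(i))=k$ from Lemma~\ref{lem.Hom}, and the backward vanishings (RE3) $\Hom(E_{i'},E_{j'}[q])=0$ with $j'<i'$ inside a window reduce to the finitely many cases $i'-j'\in\{1,2,3\}$; in each case the shift difference of the two objects falls within the range permitted by the hypotheses of Lemmas~\ref{lem.Hom}, \ref{lem.Ext1}, \ref{lem.Ext2}, so all these groups vanish. Thus each of the two windows is an exceptional sequence.

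It remains to prove $\langle\cX(-1),\cA,\cX,\cA(1)\rangle=\cD^b(\tails A)$, which is the one step that needs an idea rather than bookkeeping. I would use the two short exact sequences of Proposition~\ref{prop.SV2}(1): applying $\pi$ to $0\to X(-1)\to A^2\to Y\to0$ yields a triangle in $\cD^b(\tails A)$ showing $\cY\in\langle\cX(-1),\cA\rangle$, and applying $\pi$ to the degree shift $0\to Y\to A(1)^2\to X(1)\to0$ of $0\to Y(-1)\to A^2\to X\to0$ yields $\cX(1)\in\langle\cY,\cA(1)\rangle\subseteq\langle\cX(-1),\cA,\cX,\cA(1)\rangle$. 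Therefore $\langle\cX(-1),\cA,\cX,\cA(1)\rangle\supseteq\langle\cA,\cX,\cA(1),\cX(1)\rangle=\cD^b(\tails A)$, the last equality coming from Theorem~\ref{thm.fe2} and the autoequivalence $(1)$; so the odd window is full as well. Combining (H1) (exceptionality plus fullness of the two representative windows), (H2), and geometricity proves that $\{E_i\}_{i\in\ZZ}$ is a full geometric helix of period $4$ for $\cD^b(\tails A)$.
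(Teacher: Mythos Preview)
Your proof is correct and follows the same overall structure as the paper's: reduce to two representative windows by the shift autoequivalence, establish (H2) via Lemma~\ref{lem.nu}, and read off exceptionality and geometricity from Lemmas~\ref{lem.Hom}, \ref{lem.Ext1}, \ref{lem.Ext2}.

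The one genuine difference is in how you prove fullness of the odd window $\{\cX(-1),\cA,\cX,\cA(1)\}$. The paper argues by mutation: it invokes the $X\leftrightarrow Y$ symmetric version of Theorem~\ref{thm.fe2} to get that $\{\cA(1),\cY(1),\cA(2),\cY(2)\}$ is full exceptional, and then applies two left mutations (via Lemma~\ref{lem.mueXY}) together with Lemma~\ref{lem.mue} to turn this into $\{\cX,\cA(1),\cX(1),\cA(2)\}$. You instead use the two exact sequences of Proposition~\ref{prop.SV2}(1) directly to place $\cY$ and then $\cX(1)$ inside $\langle\cX(-1),\cA,\cA(1)\rangle$, so that the odd window contains the even window $\{\cA,\cX,\cA(1),\cX(1)\}$ already known to be full. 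Your route is a bit more elementary: it avoids the mutation formalism and avoids reproving Theorem~\ref{thm.fe2} with $Y$ in place of $X$. The paper's route, on the other hand, packages the step conceptually (the odd window is literally a mutation of a known full exceptional sequence), which connects more transparently with the helix machinery. Both arguments are short and rest on the same underlying exact sequences, since Lemma~\ref{lem.mueXY} is itself proved from Proposition~\ref{prop.SV2}(1).
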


\begin{proof}
Since $(-)_{\nu}$ commutes with shifts, if $\cM \in \{\cA(i), \cX(i)\}_{i\in \ZZ}$, then
$\cM\lotimes_{\cA} \omega_{\cA}^{-1} \cong  \cM_{\nu^{-1}}(2) \cong \cM(2)$
by Lemma \ref{lem.nu}, so $E_{i+4} \cong  E_i \lotimes_{\cA} \omega_{\cA}^{-1}$. 
To show that $\{E_i, E_{i+1}, E_{i+2}, E_{i+3}\}$ is a full exceptional sequence for every $i\in \ZZ$, it is enough to show that 
$\{\cA, \cX, \cA(1), \cX(1)\}$ and 
$\{\cX, \cA(1), \cX(1),\cA(2)\}$ are  full exceptional sequences. 
By Theorem \ref{thm.fe2}, $\{\cA, \cX, \cA(1), \cX(1)\}$ is a full exceptional sequence. 
We now show that $\{\cX, \cA(1), \cX(1),\cA(2)\}$ is a full exceptional sequence.
Similar to Theorem \ref{thm.fe2}, we see that $\{\cA(1), \cY(1), \cA(2), \cY(2)\}$ is a full exceptional sequence.
Since $L_{\cA(1)}(\cY(1)) = \cX$ and $L_{\cA(2)}(\cY(2)) = \cX(1)$ by Lemma \ref{lem.mueXY}, it follows that
\begin{align*}
\{ \cX, \cA(1), \cX(1), \cA(2) \}
&= \{L_{\cA(1)}(\cY(1)), \cA(1), L_{\cA(2)}(\cY(2)), \cA(2)\} \\
&= L_3 L_1 \{\cA(1), \cY(1), \cA(2), \cY(2)\}
\end{align*}
is a full exceptional sequence by Lemma \ref{lem.mue}.

By Lemmas \ref{lem.HE} (4), \ref{lem.Ext1}, \ref{lem.Ext2}, we have
\begin{align*}
&\Ext^q_\cA(\cA, \cX(i)) = 0 \quad (q\geq 1, i\geq 0), 
&&\Ext^q_\cA(\cA, \cA(i)) = 0 \quad (q\geq 1, i\geq 1),\\
&\Ext^q_\cA(\cX, \cA(i)) = 0 \quad (q\geq 1, i\geq 1),
&&\Ext^q_\cA(\cX, \cX(i)) = 0 \quad (q\geq 1, i\geq 1),
\end{align*}
so it follows that $\{E_i\}_{i\in\ZZ}$ is geometric.
\end{proof}

The following is an application of the main result of this paper.

\begin{theorem} \label{thm.sq}
For every standard smooth quadric surface $\cC$ in a quantum $\PP^3$, there exists a right noetherian AS-regular algebra $B$ over $kK_2$
of dimension 3 and of Gorenstein parameter $2$ such that $\cC\cong \tails B$ where $kK_2$ is the path algebra of the 2-Kronecker quiver $K_2$.   
\end{theorem}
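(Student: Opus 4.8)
The plan is to apply the main result, Theorem \ref{thm.main}, to the category $\cC$ using the explicit full geometric helix of period $4$ constructed in Theorem \ref{thm.qgh}. The strategy has three ingredients: verify the hypotheses (AS1) and (AS2) of Theorem \ref{thm.main} for $\cC$ with a suitably chosen ample pair $(\cO,s)$, identify the resulting algebra $A=B(\cC,\cO,s)_{\geq 0}$ as an AS-regular algebra over $A_0$, and finally compute $A_0$ to see that it is isomorphic to $kK_2$.

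\textbf{Step 1: the helix and the ample pair.} Write $\cC\cong \tails A$ with $A=S/(f)$ as in the running hypotheses. By Theorem \ref{thm.qgh}, setting $E_{2i}:=\cA(i)$ and $E_{2i+1}:=\cX(i)$ gives a full geometric helix of period $4$ for $\cD^b(\tails A)$ (note this is a genuine helix, not merely a relative one, since each $\End_\cA(E_i)$ equals $k$ by Lemma \ref{lem.Hom}; but for the application I only need the relative version). The key point is that this helix is of the form $\{s^i\cO\}_{i\in\ZZ}$ for an algebraic pair $(\cO,s)$: one needs a $k$-linear autoequivalence $s$ of $\cC$ with $s(E_i)\cong E_{i+1}$ for all $i$. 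Such an $s$ arises because the helix is periodic with $E_{i+4}\cong E_i\lotimes_{\cA}\omega_{\cA}^{-1}\cong E_i(2)$, and the two ``types'' $\cA$ and $\cX$ are exchanged by a functor: concretely, $s$ should be built from the short exact sequences of Proposition \ref{prop.SV2}(1) relating $\cA$, $\cX$, $\cY$, together with the shift $(1)$. I would either exhibit $s$ directly on $\cD^b(\cC)$ as a composite of mutation-type operations that restricts to an autoequivalence of the heart $\cC$, or invoke the general principle that a geometric helix of shifts determines such an $s$ up to natural isomorphism.

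\textbf{Step 2: ampleness and application of the main theorem.} Once $(\cO,s)$ is in hand, I must check that it is ample for $\cC$. Since $A^{(\ell)}$-type Veronese constructions are involved, the cleanest route is: $\{s^{2j}(\cO\oplus s\cO)\}_{j\in\ZZ}$ is, by Lemma \ref{lem.lr}, a full geometric relative helix of period $2$, and $\cO\oplus s\cO$ should be $\cA\oplus\cX$ up to the choice of origin. The pair $(\cO\oplus s\cO, s^2)$ then corresponds under $(1)$-translation to the pair $(\cA\oplus \cX,(1)\circ(\text{exchange}))$; ampleness of $(\cA,(1))$ for $\tails A$ is standard (Theorem \ref{thm.AZ}(1), using that $A$ is noetherian), and ampleness transfers across Lemma \ref{lem.po} and Lemma \ref{lem.equi}(2). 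Having verified (AS1) (which holds by Example \ref{ex.cf}(2) since $A$ is a graded isolated singularity, so $\omega_\cA=\pi\omega_A$ is the canonical bimodule) and (AS2), Theorem \ref{thm.main} yields that $B:=B(\cC,\cO,s)_{\geq 0}$ is a graded right coherent AS-regular algebra over $B_0=\End_\cC(\cO)$ of dimension $\gldim\cC+1=3$ and Gorenstein parameter $4$, with $\cC\cong\tails B$. Since $\cO\in\cC$ is a noetherian object ($\cA$ is noetherian in $\tails A$), $B$ is right noetherian by the last clause of Theorem \ref{thm.main}.

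\textbf{Step 3: Veronese down to get Gorenstein parameter 2 and identify $kK_2$.} The main theorem as applied gives Gorenstein parameter $4$, but the statement asks for parameter $2$. The fix is Corollary \ref{cor.qv}: taking the quasi-Veronese $B^{[2]}$ (with $2\mid 4$) produces a right noetherian AS-regular algebra over $(B^{[2]})_0$ of dimension $3$ and Gorenstein parameter $2$, with $\tails B^{[2]}\cong\tails B\cong\cC$. It remains to compute $(B^{[2]})_0 = \begin{pmatrix} B_0 & B_1 \\ 0 & B_0\end{pmatrix}$. Here $B_0=\End_\cC(\cO)$ and, choosing the origin so $\cO=\cA$, we get $B_0\cong\End_\cA(\cA)\cong A_0=k$, while $B_1=\Hom_\cC(\cO,s\cO)=\Hom_\cA(\cA,\cX)\cong k^2$ by Lemma \ref{lem.Hom}(2). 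Hence $(B^{[2]})_0\cong\begin{pmatrix} k & k^2 \\ 0 & k\end{pmatrix}\cong kK_2$, the path algebra of the $2$-Kronecker quiver. Setting this $B^{[2]}$ as the algebra in the statement completes the proof.

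\textbf{The main obstacle} I anticipate is Step 1: producing an honest $k$-linear autoequivalence $s$ of $\cC$ (equivalently, of $\cD^b(\cC)$ preserving the heart) that realizes the shift $E_i\mapsto E_{i+1}$ of the helix. The helix in Theorem \ref{thm.qgh} is written ``by hand'' as an interleaving of two families, and while period-$4$ self-similarity $E_{i+4}\cong E_i(2)$ is clear, the step-$1$ map mixing $\cA$'s and $\cX$'s is not manifestly an autoequivalence of the abelian category. One resolves this either by a direct construction (the functor $\cM\mapsto$ cone or cocone of the canonical map built from Proposition \ref{prop.SV2}(1), checked to send $\cC$ to $\cC$ and to be invertible), or by first passing to the period-$2$ relative helix $\{\cA\oplus\cX, (1)\}$ where the needed autoequivalence is just $(1)$, applying Theorem \ref{thm.main} in its \emph{relative} form directly to the pair $(\cA\oplus\cX,(1))$ to get an AS-regular algebra over $\End_\cC(\cA\oplus\cX)$ of dimension $3$ and Gorenstein parameter $2$ — and then observing $\End_\cC(\cA\oplus\cX)\cong\begin{pmatrix} k & k^2\\0 & k\end{pmatrix}\cong kK_2$ by Lemma \ref{lem.Hom}. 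This second route bypasses the construction of a step-$1$ autoequivalence entirely and is probably the cleanest; I would structure the final proof around it, using Lemma \ref{lem.lr} only to guarantee that $\{(s^2)^j(\cA\oplus\cX)\}$ remains a full geometric relative helix of period $2$.
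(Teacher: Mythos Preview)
Your ``second route'' at the end---taking $\cO=\cA\oplus\cX$ and $s=(1)$, applying Lemma~\ref{lem.lr} with $r=2$ to the period-$4$ helix of Theorem~\ref{thm.qgh}, and then invoking Theorem~\ref{thm.main} directly to obtain an AS-regular algebra of dimension $3$ and Gorenstein parameter $2$---is exactly the paper's proof, and your computation of $\End_{\cA}(\cA\oplus\cX)\cong\begin{pmatrix}k&k^2\\0&k\end{pmatrix}\cong kK_2$ via Lemma~\ref{lem.Hom} is the paper's computation. The first route (Steps~1--3, via a step-$1$ autoequivalence and then a quasi-Veronese) is an unnecessary detour; the paper never constructs such an $s$, and you are right to be suspicious of it.

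The genuine gap is the verification of ampleness. You propose to deduce ampleness of $(\cA\oplus\cX,(1))$ from ampleness of $(\cA,(1))$ via Lemma~\ref{lem.po} and Lemma~\ref{lem.equi}(2), but neither lemma applies here: Lemma~\ref{lem.po} with $\cO=\cA$, $s=(1)$, $r=2$ relates $(\cA,(1))$ to $(\cA\oplus\cA(1),(2))$, not to $(\cA\oplus\cX,(1))$; and Lemma~\ref{lem.equi}(2) would require an equivalence of algebraic triples carrying the $\cA(i)$'s to the $(\cA\oplus\cX)(i)$'s, which you do not have (indeed, producing one is precisely the step-$1$ obstacle). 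Concretely, condition (A2) for $(\cA\oplus\cX,(1))$ requires that $\Hom_{\cA}(\cX(-i),\phi)$ be eventually surjective for every epimorphism $\phi$, and this does not follow formally from the corresponding statement for $\cA(-i)$. The paper closes this gap by a different argument: since $A$ is a noetherian AS-Gorenstein graded isolated singularity of dimension $3$ and $A\oplus X\in\CM^{\ZZ}(A)$, the pair $(\pi(A\oplus X),(1))$ is ample for $\tails A$ by the proof of \cite[Theorem~2.5]{MU0}. The missing input in your sketch is the maximal Cohen-Macaulay property of $X$, which is what makes ampleness go through.
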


\begin{proof}
Suppose that $\cC=\tails S/(f)$ 
where $S$ is a noetherian 4-dimensional quadratic AS-regular algebra over $k$ and $f\in S_2$ is a central regular element such that $A=S/(f)$ is a domain and a graded isolated singularity.
If $\cO := \cA\oplus \cX\in \tails A$ and $s:= (1) \in \Aut _k(\tails A)$, then $\{s^i\cO\}_{i\in\ZZ}$ is a full geometric relative helix of period 2 for $\cD^b(\tails A)$ by Theorem \ref{thm.qgh} and Lemma \ref{lem.lr}.
Since $A$ is a noetherian AS-Gorenstein graded isolated singularity of dimension 3 and $A\oplus X\in \CM^{\ZZ}(A)$ such that $\cO=\pi (A\oplus X)$,
it follows that $(\cO,s)$ is an ample algebraic pair for $\tails A$ by the proof of \cite[Theorem 2.5]{MU0}.
Since
$$ \End_{\cA}(\cO) = \begin{pmatrix} \End_{\cA}(\cA)
&\Hom_\cA(\cA, \cX)\\\Hom_\cA(\cX, \cA) &\End_{\cA}(\cX) \end{pmatrix}
=  \begin{pmatrix} k & k^2 \\ 0 &k \end{pmatrix}
= kK_2,$$
we see that $\tails A$ is equivalent to $\tails B$ for an AS-regular algebra $B=B(\tails A, \cO, s)$ over $kK_2$
of dimension $\gldim (\tails A)+1=3$ and of Gorenstein parameter $2$ by Theorem \ref{thm.main}. 
Since $\cO= \cA\oplus \cX$ is a noetherian object in $\tails A$ as a direct sum of two noetherian objects, 
$B=B(\tails A, \cO, s)$ is right noetherian by Theorem \ref{thm.main}.  
\end{proof}

\begin{example} Let $Q$ be a smooth quadric surface in $\PP^3$.  If $B=k\<x, y\>/(x^2y-yx^2, xy^2-y^2x)$, then $B$ is a right noetherian AS-regular algebra over $k$ of dimension 3 and of Gorenstein parameter $4$ such that $\coh Q\cong \tails B$, so
$$B^{[2]} = \bigoplus_{i \in \NN} \begin{pmatrix} B_{2i} & B_{2i+1} \\ B_{2i-1} &B_{2i} \end{pmatrix}$$ is a right noetherian AS-regular algebra over $kK_2$
of dimension 3 and of Gorenstein parameter $2$ by Corollary \ref {cor.qv} such that $\coh Q\cong \tails B\cong \tails B^{[2]}$.
\end{example} 

\section{Appendix}  

In this appendix, we will give an example of a non-standard algebra. 

\begin{example} Let $A=S/(f)$ where $S=k[x, y, z, w]$ such that $\deg x=\deg y=\deg z=\deg w=1$ and $f=xw-yz\in S_2$ so that $A$ is the homogeneous coordinate ring of a smooth quadric surface in $\PP^3$.  It is easy to see that $A$ is standard.  
In fact, if
$$M=\begin{pmatrix} x & y \\ z & w \end{pmatrix} \quad \textrm{and} \quad N=\begin{pmatrix} w & -y \\ -z & x \end{pmatrix},$$
then ${\mathbb M}=\{\Coker (\overline M\cdot), \Coker (\overline N\cdot)\}$.  Here we view $M$ as a matrix whose entries are in $S_1$ and $\overline M$ as a matrix whose entries are in $A_1$.  Since $MN=NM=fE$ in $S$, 
$$\begin{CD} 
\cdots @>\overline N\cdot >> A(-3)^2 @>\overline M\cdot >> A(-2)^2 @>\overline N\cdot >> A(-1)^2 @>\overline M\cdot>> A^2 \to \Coker (\overline M\cdot) \to 0, \\
\cdots @>\overline M\cdot >> A(-3)^2 @>\overline N\cdot >> A(-2)^2 @>\overline M\cdot >> A(-1)^2 @>\overline N\cdot>> A^2 \to \Coker (\overline N\cdot) \to 0
\end{CD}$$  
are the minimal free resolutions of $\Coker (\overline M\cdot), \Coker (\overline N\cdot)$ over $A$, so $A$ is standard.  

If $\s\in \GrAut S$ is a graded algebra automorphism of $S$ defined by $\s(x)=w, \s(y)=-y, \s(z)=-z, \s(w)=x$, then $\s(f)=f$, so $\s$ induces a graded algebra automorphism of $A$ such that $A^{\s}=S^{\s}/(f^{\s})$ where 
$$S^{\s}=k\<x, y, z, w\>/(xy+yw, xz+zw, x^2-w^2, yz-zy, yx+wy, zx+wz)$$
is a noetherian quadratic AS-regular algebra over $k$ of dimension 4 and $f^{\s}=x^2+yz\in S^{\s}$ is a central regular element.  Since $A$ is a domain and a graded isolated singularity, so is $A^{\s}$.  Since 
$M^2=N^2=f^{\s}E$ in $S^{\s}$, it follows that ${\mathbb M}=\{\Coker (\overline M\cdot), \Coker (\overline N\cdot)\}$, and 
$$\begin{CD} 
\cdots @>\overline M\cdot >> A^{\s}(-3)^2 @>\overline M\cdot >> A^{\s}(-2)^2 @>\overline M\cdot >> A^{\s}(-1)^2 @>\overline M\cdot>> (A^{\s})^2 \to \Coker (\overline M\cdot) \to 0 \\
\cdots @>\overline N\cdot >> A^{\s}(-3)^2 @>\overline N\cdot >> A^{\s}(-2)^2 @>\overline N\cdot >> A^{\s}(-1)^2 @>\overline N\cdot>> (A^{\s})^2 \to \Coker (\overline N\cdot) \to 0
\end{CD}$$  
are the minimal free resolutions of $\Coker (\overline M\cdot), \Coker (\overline N\cdot)$ over $A^{\s}$, so $A^{\s}$ is non-standard.  However, since $\tails A^{\s}\cong \tails A$, Theorem \ref{thm.sq} applies to this case.  
\end{example} 



\begin{thebibliography}{99}
\bibitem{AZ}
M. Artin and J. J. Zhang,
Noncommutative projective schemes,
\textit{Adv. Math.} \textbf{109} (1994), no. 2, 228--287.

\bibitem{BS}
P. Balmer and M. Schlichting,
Idempotent completion of triangulated categories,
\textit{J. Algebra} \textbf{236} (2001), no. 2, 819--834. 

\bibitem{B}
A. I. Bondal,
Representation of associative algebras and coherent sheaves,
\textit{Math. USSR-Izv.} \textbf{34} (1990), no. 1, 23--42.

\bibitem{BK}
A. I. Bondal and M. M. Kapranov,
Representable functors, Serre functors, and reconstructions,
\textit{Math. USSR-Izv.} \textbf{35} (1990), no. 3, 519--541.

\bibitem{Bu}
R.-O. Buchweitz,
Maximal Cohen-Macaulay modules and Tate cohomology
over Gorenstein rings,
unpublished manuscript (1985).

\bibitem{BH}
R.-O. Buchweitz and L. Hille,
Higher representation-infinite algebras from geometry,
Oberwolfach report \textbf{8} (2014), 466--469.

\bibitem{CS}
A. Canonacoa and P. Stellari, 
A tour about existence and uniqueness of dg enhancements and lifts,
\textit{J. Geom. Phys.} \textbf{122} (2017), 28--52.

\bibitem{CYZ}
X. W. Chen, Y. Ye and P. Zhang,
Algebras of derived dimension zero,
\textit{Comm. Algebra} \textbf{36} (2008), no. 1, 1--10. 


\bibitem{IT}
O. Iyama, and R. Takahashi, 
Tilting and cluster tilting for quotient singularities, 
\textit{Math. Ann.} \textbf{356} (2013), no. 3, 1065--1105.

\bibitem{Jcm}
P. J\o rgensen,
Finite Cohen-Macaulay type and smooth non-commutative schemes,
\textit{Canad. J. Math.} \textbf{60} (2008), no. 2, 379--390. 

\bibitem{Min}
H. Minamoto,
Ampleness of two-sided tilting complexes, 
\textit{Int. Math. Res. Not.} (2012), no. 1, 67--101. 

\bibitem{MM}
H. Minamoto and I. Mori,
The structure of AS-Gorenstein algebras,
\textit{Adv. Math.} \textbf{226} (2011), no. 5, 4061--4095. 

\bibitem{Mbc}
I. Mori,
B-construction and C-construction,
\textit{Comm. Algebra} \textbf{41} (2013), no. 6, 2071--2091. 

\bibitem{MU0}
I. Mori and K. Ueyama,
Ample group actions on AS-regular algebras and noncommutative graded isolated singularities,
{\it Trans. Amer. Math. Soc.} {\bf 368} (2016), no. 10, 7359--7383.

\bibitem{MU}
I. Mori and K. Ueyama,
Stable categories of graded maximal Cohen-Macaulay modules over noncommutative quotient singularities, 
\textit{Adv. Math.} \textbf{297} (2016), 54--92.


\bibitem{O}
D. Orlov, \textit{Derived categories of coherent sheaves and triangulated categories of singularities},  
Algebra, arithmetic, and geometry: in honor of Yu. I. Manin. Vol. II,  503--531, \textit{Progr. Math.}, \textbf{270}, 
Birkhauser Boston, Inc., Boston, MA, 2009.

\bibitem{P} 
A. Polishchuk,
Noncommutative proj and coherent algebras,
\textit{Math. Res. Lett.} \textbf{12} (2005), no. 1, 63--74. 

\bibitem{Se}
J.-P. Serre, Faisceaux alg\'ebriques coh\'erents,
\textit{Ann. of Math. (2)} \textbf{61} (1955), 197--278.

\bibitem{SV}
S. P. Smith and M. Van den Bergh,
Noncommutative quadric surfaces,
\textit{J. Noncommut. Geom.} \textbf{7} (2013), no. 3, 817--856.

\bibitem{U}
K. Ueyama, 
Graded maximal Cohen-Macaulay modules over noncommutative graded Gorenstein isolated singularities,
\textit{J. Algebra} \textbf{383} (2013), 85--103.

\bibitem{Ucm}
K. Ueyama,
Noncommutative graded algebras of finite Cohen-Macaulay representation type,
\textit{Proc. Amer. Math. Soc.} \textbf{143} (2015), no. 9, 3703--3715.

\bibitem{Uct} 
K. Ueyama, 
Cluster tilting modules and noncommutative projective schemes,
\textit{Pacific J. Math.} \textbf{289} (2017), no. 2, 449--468.

\bibitem{V}
M. Van den Bergh,
Existence theorems for dualizing complexes over non-commutative graded and filtered rings,
\textit{J. Algebra} \textbf{195} (1997), no. 2, 662--679.

\bibitem{Ye} 
A. Yekutieli, 
Dualizing complexes over noncommutative graded algebras,
\textit{J. Algebra}  \textbf{153}  (1992),  no. 1, 41--84.

\bibitem{Zh}
J. J. Zhang,
Twisted graded algebras and equivalences of graded categories,
\textit{Proc. Lond. Math. Soc. (3)} \textbf{72} (1996), no. 2, 281--311. 
\end{thebibliography}
\end{document}